\documentclass[11pt,twoside]{amsart}

\usepackage{color} 
\usepackage{amssymb,amscd, amsthm}
\usepackage{setspace}
\usepackage{comment}
\usepackage{hyperref, cleveref}
\usepackage{url} 
\def\opn#1#2{\def#1{\operatorname{#2}}} 
\opn\Im{Im}
\opn\rank{rank}
\opn\ini{in}
\opn\PF{PF}
\opn\RF{RF}
\opn\sgn{sgn}
\opn\Fr{Fr}
\opn\Ap{Ap}
\opn\supp{supp}
\opn\mult{mult}
\opn\ord{ord}
\opn\alt{alt}
\opn\Alt{Alt}
\opn\Sym{Sym}
\opn\relint{relint}
\opn\Diag{Diag}
\opn\det{det}
\opn\diag{diag}
\opn\embdim{emb\,dim}
\newcommand{\Z}{\mathbb Z}
\newcommand{\N}{\mathbb N}
\newcommand{\MM}{\mathbb M}
\newcommand{\YY}{\mathbb Y}

\newcommand{\cS}{\mathcal{S}}
\newcommand{\cC}{\mathcal{C}}

\def\mm{\mathfrak m}

\def\bv{{\mathbf v}}
\def\bw{{\mathbf w}}
\def\by{{\mathbf y}}
\def\ba{{\mathbf a}}
\def\bx{{\mathbf x}}
\def\be{{\mathbf e}}
\def\bm{{\mathbf m}}

\def\bbM{{\mathbb M}}
\def\al{\alpha}

\newtheorem{Theorem}{Theorem}[section]
\newtheorem{thm}[Theorem]{Theorem}
\newtheorem{mainthm}{Theorem}
\newtheorem{prop}[Theorem]{Proposition}
\newtheorem{lem}[Theorem]{Lemma}
\newtheorem{cor}[Theorem]{Corollary}

\theoremstyle{definition}
\newtheorem{rem}[Theorem]{Remark}
\newtheorem{ex}[Theorem]{Example}
\newtheorem{defn}[Theorem]{Definition}
\newtheorem{note}[Theorem]{Notation}
 
\numberwithin{equation}{section}

\begin{document}
\title[
Almost symmetric almost complete intersection numerical semigroups
]{
The structure of almost symmetric almost complete intersection numerical semigroups} 

\author[Kazufumi Eto et al.]{Kazufumi Eto,
Naoyuki Matsuoka,
Alessio Moscariello,
Takahiro Numata,
Alessio Sammartano,
Kei-ichi Watanabe
}

\address[Kazufumi Eto]{Department of Mathematics, Nippon Institute of Technology, 
Miyashiro, Saitama, Japan 345-8501}
\email{etou@nit.ac.jp}
\address[Alessio Moscariello]{
Department of Mathematics and Scientific Computing, University of Graz, NAWI Graz,
Heinrichstra{\ss}e 36,
8010 Graz}
\email{alessio.moscariello@uni-graz.at}

\address[Naoyuki Matsuoka]{Department of Mathematics, School of Science and Technology, Meiji University,
1-1-1 Higashi-mita, Tama-ku, Kawasaki, Japan 214-8571
}
\email{naomatsu@meiji.ac.jp}

\address[Takahiro Numata]{School of Science and Technology, Meiji University,
1-1-1 Higashi-mita, Tama-ku, Kawasaki, Japan 214-8571
}
\email{takahiro.n0913@gmail.com}

\address[Alessio Sammartano]{Dipartimento di Matematica,
Politecnico di Milano,
Milano, 20133, Italy}
\email{alessio.sammartano@polimi.it}

\address[Kei-ichi Watanabe]{Department of Mathematics, College of Humanities and Sciences, 
Nihon University, Setagaya-ku, Tokyo, 156-8550, Japan and 
Organization for the Strategic Coordination of Research and Intellectual Properties, Meiji University
}
\email{watnbkei@gmail.com}

\subjclass[2020]{Primary: 13F65, 20M14; Secondary: 05E40, 11D07,
  13G05, 14H20}

\begin{abstract} 
We prove a structure theorem for numerical semigroups $H$ that are almost symmetric and almost complete intersections. 
Specifically, we show that a row-factorization (RF) matrix of $H$ must possess a highly regular structure, 
which we call a \emph{cascade matrix}.
Consequently,  the defining ideal $I_H$ 
 of the associated semigroup ring $\Bbbk[H]$ also exhibits 
a highly regular structure, derived from this cascade matrix.
Moreover, both the RF-matrix and the binomial minimal generating set of $I_H$ are unique---properties that are remarkably rare for general numerical semigroups.
Conversely, 
we show that this structure completely characterizes almost symmetric almost complete intersection numerical semigroups: 
to every cascade matrix $M$ we associate a submonoid $H \subseteq \N$ and, whenever
this $H$ is a numerical semigroup, we prove that it is  pseudo-symmetric, almost complete intersection, and has $M$ as RF-matrix.
In this way, for each odd integer $e$,
we establish a new combinatorial bijection between
the set of 
almost symmetric almost complete intersection  semigroups $H$ 
of embedding dimension $e$ and 
the set of  $e$-tuples of positive integers satisfying a certain gcd condition.

As a consequence of our study, we obtain several key  results.
\begin{enumerate}
    \item A rigidity theorem: 
    if an almost complete intersection  semigroup is almost symmetric, then  it is forced to have odd embedding dimension and to be pseudo-symmetric.
This result can be regarded
as the ``next step'' after Kunz's theorem, which states that  an almost complete intersection semigroup is never symmetric.
 \item Cascade polynomials: for each odd positive integer $e$, we construct a multivariate squarefree polynomial $P_e$ with integer coefficients, arising from a cascade matrix of variables. We provide an enumerative interpretation of its coefficients, thereby proving their non-negativity.
 
 \item Herzog--Watanabe question:
  en route to proving  the main theorem,  we prove that
 every  minimal relation of an arbitrary numerical semigroup $H$
 can be obtained by subtracting two rows in some $\RF$-matrix of $H$,
 affirmatively answering a 2019 question by Herzog and Watanabe.
\end{enumerate}

\end{abstract}

\maketitle

\section{Introduction}

A classical theme in commutative algebra is understanding how  the structure of an ideal 
is dictated by algebraic or homological conditions such as the Cohen--Macaulay, Gorenstein, or complete intersection properties, and their generalizations.
Notable results in this direction include the Hilbert--Burch \cite{Burch}
and Buchsbaum--Eisenbud \cite{BE} theorems, characterizing the structure of Cohen–Macaulay ideals of codimension 2 and Gorenstein ideals of codimension 3, respectively; subsequent work has yielded structure theorems for several classes of ideals of low codimension.
For an overview of the theory and some very recent developments, 
see \cite{CGNW,GCNW} and the references therein.

It is  natural to investigate this general problem for classes of ideals satisfying additional geometric or combinatorial properties,
such as   toric ideals.
In this paper, we investigate the structure of toric ideals of numerical semigroups $H \subseteq \mathbb{N}$.
In this setting, the  arithmetic framework of the semigroup often leads to sharper structure theorems, and sometimes to entirely new phenomena.
For example, 
Herzog's result for 3-generated  numerical semigroups  \cite{H} is a refined 
version of the Hilbert--Burch theorem, 
and Bresinsky's theorem for 4-generated symmetric numerical semigroups \cite{B75}
refines the Buchsbaum--Eisenbud theorem.
For an account of the current state of the art and key open problems regarding toric ideals of numerical semigroups, see \cite{MS}.

For a numerical semigroup $H $ minimally generated by $e$ integers, 
its toric ideal $I_H$ has codimension $e-1$, and hence requires at least $e-1$ minimal generators.
The ideal $I_H$ (or the semigroup $H$) is called a \emph{complete intersection} if $I_H$ is generated by $e-1$ elements, and  an \emph{almost complete intersection} if it is generated by  $e$ elements.
A numerical semigroup is \emph{almost symmetric} if its semigroup ring is almost Gorenstein, a generalization of the Gorenstein property to arbitrary Cohen--Macaulay type introduced in \cite{BF} that has received considerable attention,
see e.g. \cite{E,GTT,HW,N};
the reader can find the  definition in Section~\ref{SubsectionNumericalSemigroups}.

The main result of this work is a structure theorem for  toric ideals of   numerical semigroups that are 
 almost complete intersections and almost symmetric, see Theorem \ref{mainTh} below.
We obtain  a complete description of these toric ideals,  
showing that they possess a highly structured and unique minimal set of binomial generators.
In turn, 
these binomials arise from the data of a \emph{row-factorization (RF) matrix}, 
a gadget introduced in \cite{Mo} that  encodes factorizations of pseudo-Frobenius numbers of $H$;
this matrix too is highly structured and unique for the semigroups considered here.

The existence of such a description in unrestricted codimension is somewhat surprising, since
 all known 
 structure theorems for almost complete intersections rely heavily on low  codimension assumptions.
Specifically, the  case of codimension three is  completely solved in the classical paper \cite{BE}, and subsequent works have tackled the case of codimension 4 \cite{Kustin, Kustin93,Palmer}. 

We now give the exact statement of our main result;
see Section~\ref{SectionBasicConcepts} for precise definitions.

\begin{mainthm}\label{mainTh} Let $H=\langle n_1, n_2, \ldots, n_e\rangle$ be  an almost symmetric almost complete intersection numerical semigroup of embedding dimension  $e\ge 3$.
Then 
\begin{enumerate}
\item $H$ is  pseudo-symmetric;
\item $e=2e'+1$ is odd;
\item up to changing the order of $ n_1, n_2, \ldots, n_e$,
the toric ideal 
$I_H$ is generated by the binomials
\[
x_i^{\al_i} - x_{i+1}{x_{i+e'+1}^{\al_{i+e'+1}-1}} \qquad  i = 1,2 ,\ldots, e \]
where
$\al_i$ is the least integer such that $\alpha_i n_i$ admits another factorization in $H$,
and where we identify $x_j = x_{j-e}$ and $\al_j =\al_{j-e}$ if $j> e$.
Moreover, this is the unique minimal set of binomial generators of $I_H$;

\item the pseudo-Frobenius number $f=\frac{\Fr(H)}{2}$ has a RF-matrix
$ (m_{ij})_{1\le i, j \le e}$ 
\[ m_{ij} = \left\{
\begin{array}{ll}
-1 & \text{ if  } i=j, \\
0  &  \text{ if }  0 < j-i \le e', \\
\al_j-1 & \text{ otherwise}. 
\end{array}
\right. 
\]
where  $j-i$ is considered modulo $e$.  
Moreover, this is the unique RF-matrix of $f$.
\end{enumerate}
Conversely, let $\ba = (a_1, \ldots , a_e)$ be positive integers, and consider the matrix  $\bbM = (m_{ij})$ defined as
\[ m_{ij} = \left\{
\begin{array}{ll}
-1 & \text{ if  } i=j, \\
0  &  \text{ if }  0 < j-i \le e', \\
a_j & \text{ otherwise}, 
\end{array}
\right.  \]
where the difference $j-i$ is  considered modulo $e$.
Assume $\ba \ne(1,1,1)$.
Let  
\[(n_1, \ldots , n_e)^\intercal = \det(\bbM) \cdot \bbM^{-1}{\bf 1}_e,\]
 where we denote by {\bf 1} the column vector of length $e$, 
whose every entry is $1$.
Then
\begin{enumerate}
    \item $\det(\bbM)>0$,
\item $n_i > 0$ for every $i=1,\ldots,e$,
\end{enumerate}
and,  if in addition we have $\gcd(n_1, n_2, \ldots, n_e)=1$, then
\begin{enumerate}
\setcounter{enumi}{2}
\item $H = \langle n_1, n_2, \ldots, n_e \rangle$
is a  numerical semigroup of embedding dimension $e$,
\item $H$ is pseudo-symmetric, in particular, it is almost symmetric,
\item $H$ is an almost complete intersection,
\item $\bbM$ is an RF-matrix for $H$.
\end{enumerate}
\end{mainthm}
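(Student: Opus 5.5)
The proof splits into the structural direction and the converse; I would handle the structural direction first.

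\textbf{Structural direction.} Let $H$ be almost symmetric and an almost complete intersection, of type $t$.

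\emph{Step 1: type and symmetry.} By Kunz's theorem $H$ is not symmetric, so $t\ge 2$. For almost complete intersections the Betti numbers of $\Bbbk[H]$ are $1,e,\beta_2,\ldots$ with last Betti number $t$. I would use the known fact that an almost complete intersection has $t\le 2$, since the last syzygy of an almost complete intersection is small. Then almost symmetry with $t=2$ forces $\PF(H)=\{f,\Fr(H)\}$ with $f+f=\Fr(H)$, so $H$ is pseudo-symmetric. This gives (1).

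\emph{Step 2: an RF-matrix.} Take an RF-matrix $M=(m_{ij})$ of $f$. Each row $i$ records a factorization $f+n_i=\sum_{j\ne i}m_{ij}n_j$, and pseudo-symmetry gives constraints on it: for example, $2f=\Fr(H)$ implies $f+f\notin H$. The rows yield binomials. Using the Herzog--Watanabe result proved along the way, every minimal relation is the difference of two rows of some RF-matrix. Combined with the count of exactly $e$ minimal generators, this forces every generator to have the shape $x_i^{\alpha_i}-(\text{monomial})$. So each variable appears as a pure power in exactly one generator, and the generators are indexed by $i$.

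\emph{Step 3: the combinatorial core.} Show that row $i$ of $M$ has exactly a block of $e'$ zeros cyclically after the diagonal, and entries $\alpha_j-1$ elsewhere. Here I would exploit that $f+n_i+n_j\in H$ has two factorizations. Comparing rows $i$ and $j$ gives $x_i^{m_{ji}+1}\cdots-x_j^{m_{ij}+1}\cdots\in I_H$, which forces $m_{ji}+1\ge\alpha_i$ or $m_{ij}+1\ge \alpha_j$, while $f\notin H$ bounds the entries above by $\alpha_j-1$. This gives a tournament: for each pair $i\ne j$, exactly one of $m_{ij}, m_{ji}$ equals $\alpha-1$ and the other is $0$. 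The almost-complete-intersection count then forces this tournament to be the regular cyclic tournament, and regularity requires $e$ odd, giving (2) and (4). Uniqueness of $M$ and of the generators follows because each $\alpha_in_i$ has exactly two factorizations.

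\textbf{I expect Step 3, the tournament-regularity argument, to be the main obstacle.} The difficulty is showing that only the cyclic pattern keeps the number of minimal generators at $e$; any other tournament should produce extra indispensable binomials. I would first try induction on scores, or a Gröbner/degree argument showing that two generators with the same pure-power variable would occur otherwise.

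\textbf{Converse.} Compute $\det\bbM$ and the adjugate explicitly: $\bbM$ is a circulant-pattern matrix, and cofactor expansion gives sums of monomials in the $a_j$ with positive coefficients (the cascade polynomial). This proves $\det(\bbM)>0$ and $n_i>0$, with $\ba\ne(1,1,1)$ excluding degeneracy. By construction $\bbM(n_i)^\intercal=\det(\bbM)\mathbf 1$, so every row gives $\sum_j m_{ij}n_j=\det(\bbM)$ as a constant. Setting $f=\det\bbM$-related value, this shows $f+n_i\in H$ for all $i$. Next:

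\begin{itemize}
\item[(a)] Verify minimal generation, and that the binomials $x_i^{a_i+1}-x_{i+1}x_{i+e'+1}^{a_{i+e'+1}}$ lie in $I_H$, by subtracting consecutive rows.
\item[(b)] Show these binomials generate $I_H$ by checking that $\Bbbk[x]/J$ has length $n_1$ modulo $x_1$ (Apéry set count), comparing with $\prod$-type formula from $\det$.
\item[(c)] Deduce $f\notin H$ and identify $\PF(H)=\{f,2f\}$ from the Apéry set maxima.
\end{itemize}

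This yields (3)–(6).
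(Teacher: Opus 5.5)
\textbf{Step 1 rests on a false claim.} Almost complete intersections do not have type at most $2$, even among numerical semigroups. Take $H=\langle 7,8,9,11\rangle$. The binomials $x_2^2-x_1x_3$, $x_3^2-x_1x_4$, $x_4^2-x_1^2x_2$, $x_2x_3x_4-x_1^4$ lie in $I_H$. Modulo $x_1$ they give $(x_2^2,x_3^2,x_4^2,x_2x_3x_4)$, of colength $7=n_1$, so they agree with $I_H$ modulo $x_1$. Since $x_1$ is a non-zerodivisor on $S/I_H$, graded Nakayama shows they generate $I_H$. Yet $\Ap(H,7)=\{0,8,9,11,17,19,20\}$ has three maximal elements $17,19,20$, so $\PF(H)=\{10,12,13\}$.

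As the introduction stresses, type $2$ comes only from the two hypotheses together. In the paper, pseudo-symmetry is obtained at the end. The RF-matrix of an arbitrary $f\in\PF(H)\setminus\{\Fr(H)\}$ is forced into the cascade shape with entries $\al_j-1$, which leaves room for only one such $f$; almost symmetry then gives $2f=\Fr(H)$. So item (1) is unproved in your plan. Also, in Step 3 you may not use $2f\notin H$ to get $m_{ij}m_{ji}=0$. You must pair $f$ with $f'=\Fr(H)-f$ and use $m_{ij}m'_{ji}=0$, as in Proposition~\ref{mj=alj-1}.

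\textbf{Steps 2--3 omit the arguments that carry the proof.}
\begin{enumerate}
\item Criticality does not follow from Herzog--Watanabe plus $\mu(I_H)=e$. A single binomial $x_i^{\al_i}-x_j^{\al_j}$ can be critical for two variables, freeing a slot for a non-critical generator (Example~\ref{acinotcrit}). The paper excludes $\al_in_i=\al_jn_j$ via Eto's lemma, the resulting Ap\'ery set decomposition, and the almost symmetric, non-symmetric hypothesis (Theorem~\ref{ThmCriticalIffAci}).
\item The condition $f\notin H$ does not bound the entries of an arbitrary RF-matrix by $\al_j-1$. If $m_{ij}\ge\al_j$ and the critical binomial of $x_j$ avoids $x_i$, nothing is contradicted. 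A bounded matrix must be constructed by reduction (Lemma~\ref{mij<alj}). The reduction terminates only because $H$ is not a gluing, so any relation $\sum\lambda_i\bv_i=0$ with $\lambda_i\ge 0$ not all zero has all $\lambda_i>0$ (Corollary~\ref{new3.2}).
\item Your step ``$m_{ji}+1\ge\al_i$ or $m_{ij}+1\ge\al_j$'' rests on Corollary~\ref{ideal membership} and the chain argument of Lemma~\ref{LemChainSum}.
\item Most importantly, the tournament-to-cycle step you flag is exactly where an idea is missing; no count of extra indispensable binomials is involved. The paper shows (Proposition~\ref{RF(f)}) that each critical vector $\bv_i$ equals $\bm_p-\bm_i$ for some row $p$ of the single distinguished matrix $\MM$. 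Corollary~\ref{new3.2} then lets one order the rows cyclically so that $\bm_{i+1}-\bm_i=\bv_i$. This forces $m_{i+1,j}\le m_{ij}$ for $j\ne i$, hence a constant number of positive entries per row. Since there are $\binom{e}{2}$ positive entries in all, each row has $(e-1)/2$, so $e$ is odd and $\MM$ is the cascade matrix.
\item Your uniqueness claim (``each $\al_in_i$ has exactly two factorizations'') merely restates what must be proved.
\end{enumerate}

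\textbf{The converse is only an outline.} The following are all asserted rather than proved: $\al_i=a_i+1$ (needed for embedding dimension $e$), the colength computation for $J+(x_1)$, $\det\bbM\notin H$, $2\det\bbM\notin H$, and the absence of other pseudo-Frobenius numbers. Moreover, $\det\bbM>0$ and $n_i>0$ do not follow from ``positive coefficients''. The cascade polynomial has constant term $-1$, and the paper gets $n_i>0$ from a separate sign argument around the odd cycle $i\mapsto i+e'$.
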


For example, the semigroup $H = \langle 64,75,91,95,97,99,107\rangle$ satisfies the hypotheses of Theorem \ref{mainTh}, and  
is  pseudo-symmetric with pseudo-Frobenius numbers $\PF(H)=\{239,478\}$. 
Reordering the generators as  $\{91,75,97,99,107,64,95\}$, the pseudo-Frobenius number $f=239$ has a unique row-factorization matrix
$$
\RF(239)=\begin{pmatrix}
-1 & 0 & 0 & 0 & 1 & 2 & 1 \\
1 & -1 & 0 & 0 & 0 & 2 & 1\\
1 & 2 & -1 & 0 & 0 & 0 & 1\\
1 & 2 & 1 & -1 & 0 & 0 & 0\\
0 & 2 & 1 & 1 & -1 & 0 & 0\\
0 & 0 & 1 & 1 & 1 & -1 & 0 \\
0 & 0 & 0 & 1 & 1 & 2 & -1
\end{pmatrix},
$$
where the rows  correspond to factorizations of the elements $239+91, \ldots, 239+95$ in $H$.
From this matrix we read the generators of the toric ideal
$$
 x_1^2 - x_2x_5,\, \,
 x_2^3-x_3x_6^2,\,\,
 x_3^2-x_4x_7,\,\,
 x_4^2-x_5x_1,\,\,
 x_5^2-x_6x_2^2,\,\,
 x_6^3-x_7x_3,\,\,
 x_7^2-x_1x_4,
$$
with  each binomial corresponding to the difference of two consecutive rows of $\RF(239)$.

The case $\ba = (1,1,1) $ is a genuine exception to this construction, see Example \ref{ExException111}.

Let us now comment on the content of Theorem \ref{mainTh}, emphasizing several unexpected features.

A consequence of  Theorem \ref{mainTh} is that all the relations among $n_1, n_2, \ldots, n_e$ are generated by the simplest kind of relations, namely, those expressing a multiple of each $n_i$ as a combination of the remaining generators. 
A toric ideal with this property is called \emph{critical}. 
We investigate this concept in detail in Section \ref{SectionCriticalIdeals}.
To emphasize the exceptionality and  sharpness of  Theorem \ref{mainTh}, we point out that both the critical condition and the uniqueness of minimal binomial generators may fail for complete intersection numerical semigroups (which are automatically symmetric and thus almost symmetric),
see Examples \ref{symnotcrit} and \ref{CInotunique}.

Other unexpected consequences are the following rigidity phenomena.
First, there exist no almost symmetric almost complete intersection semigroups of \emph{even} embedding dimension.
Second, when they exist, they are forced to be pseudo-symmetric, in particular, to have type equal to two.
The latter result is unexpected, since, individually, neither
the almost symmetric nor the almost complete intersection condition imposes any obvious restriction on the type\footnote{However, see \cite[Question 13]{Mo} and \cite[Section 5]{MS} for subtle implications of the almost symmetric condition on the type.}. 
In fact, 
this situation resembles a classical theorem of Kunz \cite{Ku74}, 
which, when specialized to numerical semigroups, 
states that  an almost complete intersection semigroup is never symmetric;
our result can then be interpreted as a next step after Kunz's theorem.

The converse direction of 
Theorem \ref{mainTh} uncovers  a new way to construct  semigroups from a tuple of integers $(a_1, \ldots, a_e) $, which is interesting in its own right.
For example, 
we show that the Frobenius number of the semigroup constructed in this way is twice the determinant of the matrix $\bbM$, see Proposition \ref{PropPFfromM}.
A natural question arises:
what are the tuples of integers that actually correspond to numerical semigroups? 
In other words, when are the entries of the vector $\det(\bbM) \cdot \bbM^{-1}{\bf 1}_e$ coprime?
This condition is rather  subtle: for instance,  Example \ref{gcd hyp} shows that the answer is  sensitive to the order of $a_1, \ldots, a_e$.

In Section~\ref{SectionCascade} we study matrices of variables with the same shape as the matrix $\bbM$ of Theorem \ref{mainTh}, which we call \emph{cascade matrices}.
We focus in particular on their determinant, which we call the \emph{cascade polynomial}, 
and show that it is a squarefree polynomial all of 
whose non-constant coefficients are non-negative integers.
Because non-negativity of the coefficients is not obvious from the definition, 
a trait shared by many notable polynomials in algebraic combinatorics, 
the cascade polynomial may be an object of independent interest.

Finally, en route to proving Theorem \ref{mainTh}, we affirmatively settle a question of Herzog and Watanabe for arbitrary numerical semigroups \cite[Question 3.6]{HW}.
The question asked  whether the toric ideal $I_H$ is always generated by binomials that can be obtained as differences of two rows of some RF-matrix of $H$.
Specifically,
one of the key steps required in the proof of Theorem \ref{mainTh} is to show that all minimal binomial generators of $I_H$ arise from a \emph{single} RF-matrix of $H$,
which we establish in Proposition \ref{RF(f)}.
This phenomenon is tied  to the nature of the special semigroups we consider, and  cannot hold in greater generality, since the number of generators of $I_H$ can be arbitrarily large compared to  the size of an RF-matrix.
However, 
by adapting similar ideas as those in the proof of Proposition \ref{RF(f)}, we obtain a short proof  of the Herzog--Watanabe question in general, which we present separately in Section~\ref{SectionHW}.

\subsection*{Acknowledgements}
The authors would like to thank Alessio D'Alì and Andreas Reinhart for helpful conversations.
A. M., A. S., and K. W. are grateful to the organizers of the conference \emph{Modules and Rings: Recent Developments in Commutative Algebra}, held in Genoa in June 2026, 
where initial ideas coalesced to form the basis of this paper.
Experiments with Macaulay2 \cite{M2} and GAP \cite{GAP}, using the NumericalSgps package \cite{DGM}, have provided valuable information.

\subsection*{Funding}
N. M. was supported by JSPS KAKENHI Grant Number JP25K06941.
A. M. was supported by the Austrian Science Fund FWF, Project PAT 9756623.
A. S.  was  supported by 
PRIN 2022K48YYP \emph{Unirationality, Hilbert schemes, and singularities},
and is a member of  INdAM – GNSAGA.
K. W.  was partially supported by JSPS Grant-in-Aid 
for Scientific Research (C) Grant Number 23K03040.

\subsection*{AI disclosure}
LLMs were used solely for language editing and proofreading the final manuscript.

\section{Basic concepts}\label{SectionBasicConcepts}

In this section, we fix the notation and recall the 
 definitions  which will be used in this paper.

\subsection{Numerical semigroups}\label{SubsectionNumericalSemigroups}
Let $\N$ denote the set of non-negative integers.
A numerical semigroup is an additive submonoid of $\N$ with finite complement.
Throughout this paper, let $H$ denote a numerical semigroup.
There exist elements  $n_1,\ldots,n_e\in H$ such that every $h\in H$ can be written as $h=\sum_{i=1}^e a_in_i$ with  $a_i \in \N$. 
Such set   $\{n_1,\ldots,n_e\}\subset H$ is called a {\em set of generators} of $H$, and we  write $H=\langle n_1,\ldots ,n_e\rangle$,
and an expression $h=\sum_{i=1}^e a_in_i$ as above  is called a {\em factorization} of $h$.
More generally, for $n_1,\ldots,n_e\in \N$ we denote by $\langle n_1,\ldots ,n_e\rangle$ the submonoid of $\N$ generated by $n_1,\ldots,n_e$;
it is a numerical semigroup if and only if $\gcd(n_1,\ldots,n_e)=1$.
The set 
$\{n_1,\ldots,n_e\}$ is called a {\em minimal set of generators} of $H$ if no $n_i$ can be omitted to generate $H$,
in this case $e$ is called the {\em embedding dimension} of $H$.
Every submonoid of $\N$ has a unique
 minimal set of generators.
We  do not assume $n_1, \ldots, n_e $ to be in increasing order.

The {\em Frobenius number} of $H$ is $\Fr(H) = \max ( \Z \setminus H)$.
An element $f\in \Z\setminus H$ is called a  {\em pseudo-Frobenius} number
if $f+n_i\in H$ for all $i$.
The set of  pseudo-Frobenius numbers is  denoted by $\PF(H)$,
and its cardinality is called the {\em type} of $H$, denoted $t(H)$. 
Clearly, we have $\Fr(H) \in \PF(H)$.

The {\em Apery set} of $H$ with respect to an element $a\in H$ is
$\Ap(H,a) = \{ h\in H\;|\; h-a \not\in H\}.$
We have
$|\Ap(H,a)|=a$,  $0 \in \Ap(H,a)$, $n_i\in\Ap(H,a)$ whenever $n_i \ne a$,
and $\max\Ap(H,a) = a + \Fr(H)$.

We say that $H$ is {\em symmetric} if $\PF(H)=\{\Fr(H)\},$
equivalently, 
if
$ \Z = H \cup  \{  \Fr(H) -h \,|\,  h\in H\}$. 
If $H$ is symmetric, then $\Fr(H)$ is odd.
We say that  $H$ is {\em pseudo-symmetric} if $\Fr(H)$ is even and 
$\PF(H)=\{\Fr(H),  \Fr(H)/2 \}.$
We say that  $H$ is {\em almost symmetric} if for every $f \in \PF(H), f \ne \Fr(H),$ we have $\Fr(H)-f \in \PF(H)$;
this class includes both symmetric and pseudo-symmetric semigroups.

Given a numerical semigroup $H$, we define a partial order on $\Z$ by setting 
$b \leq_H c$ if  $c-b \in H$.
The maximal elements of $\Ap( H,a)$ with respect to this partial order are $f+a$ where $f \in\PF(H)$.

\subsection{Semigroup rings}
Fix a field $\Bbbk$,
and let $H$ 
be a numerical semigroup minimally generated by $\{n_1, \ldots, n_e\}$.
The semigroup ring of $H$ is $\Bbbk[H] = \Bbbk[t^{n_1},\ldots,t^{n_e}]\subseteq \Bbbk[t]$,
where $t$ is a variable.
It is a positively graded  $1$-dimensional Cohen--Macaulay domain,
with grading $\deg(t^h) = h$.

Let $S = \Bbbk[x_1, \ldots, x_e]$ be a polynomial ring, 
 graded by $\deg(x_i) = n_i$, and let $\mm = (x_1, \ldots, x_e)$. 
 We denote monomials of $S$ by 
$\bx^\ba = x_1^{a_1}\cdots x_e^{a_e}$, where $\ba = (a_1, \ldots, a_e) \in \N^e.
$

Consider the surjective graded ring homomorphism
$$
\Psi_H : S \to \Bbbk[H], \quad \Psi_H(x_i) = t^{n_i}.
$$
It induces an isomorphism $\Bbbk[H] \cong S/I_H$,
where $I_H= \ker(\Psi_H)$ is called the {\em toric ideal} of $H$ or the  {\em defining ideal} of $\Bbbk[H]$.
This ideal has codimension $e-1$, thus, its minimal number of generators $\mu(I_H)$
satisfies $\mu(I_H) \geq e-1$.
The ideal $I_H$, or the semigroup $H$,
is called a \emph{complete intersection} if $\mu(I_H)=e-1$,
and it is called an \emph{almost complete intersection} if $\mu(I_H)= e$.

We now describe this ideal explicitly in terms of the semigroup $H$.
Let ${\bf v}= (v_1,\ldots , v_e)\in \Z^e$.
Its {\em support} is
$\supp ({\bf v}) = \{ i \;|\; v_i\ne 0\}.$
Let $\bv^+, \bv^- \in \N^e$ be the unique vectors such that $\bv = \bv^+-\bv^-$ and $\supp(\bv^+)\cap \supp(\bv^-)=\emptyset$.
For such $\bv$,  define the binomial
\[
F(\bv) = \bx^{\bv^+} - \bx^{\bv^-}\in S.
\]
We further denote by $\be_1, \ldots, \be_e$ the standard basis vectors of the free abelian group $\Z^e$, and by  $\sigma_i(\bv)$ the $i$-th coordinate of a vector $\bv \in \Z^e$.

Consider the surjective group homomorphism
\[
\Phi_H : \Z^e \to \Z, \quad \Phi_H (a_1, \ldots , a_e) = \sum_{i=1}^e a_in_i.
\]  
The lattice $\ker(\Phi_H) \subseteq \Z^e$ has rank $e-1$.
The ideal $I_H$ is generated by the set $\{F(\bv) \,:\, \bv \in \ker(\Phi_H)\}$, and it is spanned as a $\Bbbk$-vector space by all binomials $x^\bv-x^\bw$ such that $\Phi_H(\bv)=\Phi_H(\bw)$.

For every $\bv \in \ker(\Phi_H)$,
the binomial $F(\bv)$ is homogeneous of degree 
$$
\deg(F(\bv)) = \sum_{\sigma_k(\bv)> 0} v_kn_k  =\sum_{\sigma_k(\bv)< 0} (-v_k)n_k\in H.
$$
We say that a binomial $F(\bv) \in I_H$ is {\em minimal} if it is part of some minimal set of generators of $I_H$, 
that is,
if its image in $I_H/\mm I_H$ is non-zero.

The following distinguished class of binomials in $I_H$,
encoding the simplest kind of relation among the generators of $H$,
plays a central role in this work.

\begin{defn}\label{DefCriticalBinomial}
For each $i =1, \ldots, e$   define 
\begin{equation*}
 \al_i = \min\big\{ a\in \Z_{>0}\; |\;  a n_i \in \langle n_1,\ldots ,n_{i-1},n_{i+1}, \ldots n_e\rangle\big\}.
\end{equation*}
A binomial in $I_H$ of the form 
\begin{equation}\label{EqCriticalXi}
x_i^{\al_i} - \prod_{j=1, j\ne i}^e x_j^{b_j}     
\end{equation}
is called a {\em critical binomial} with respect to $x_i$.
\end{defn}

\begin{rem}\label{RemBasicObservationsCritical}
We make some basic observations.
A critical binomial is necessarily minimal,
since the monomial  $x_i^{\alpha_i}$ cannot be in the support of any binomial in $\mm I_H$, by minimality of $\alpha_i$.
Conversely, any set of binomial generators of $I_H$ must contain critical binomials with respect to each $x_1, \ldots, x_e$,
since the binomial \eqref{EqCriticalXi} cannot be generated by  binomials not containing the monomial $x_i^{\alpha_i}$  in their supports.
A binomial might be critical with respect to two variables $x_i \ne x_j$, this happens precisely when $\alpha_i n_i = \alpha_j n_j$.
Finally, a critical binomial with respect to $x_i$ may not be unique, as 
it also depends on a choice of factorization $\sum_{j=1, j\ne i}^e b_jn_j$. 
\end{rem}

\section{The defining ideal is generated by RF-relations}\label{SectionHW}

In this section, we introduce a special class of binomials in the defining ideal $I_H$ of a numerical semigroup $H$, called \emph{$\RF$-relations}, and we show that they always generate the ideal $I_H$.
This settles a question of Herzog--Watanabe.

Let $H$ 
be a numerical semigroup minimally generated by $\{n_1, \ldots, n_e\}$.
\begin{defn}
Let $f \in \PF(H)$ be a pseudo-Frobenius number.
We define  an $\RF$-matrix $\RF(f)$ to be a
 matrix $(m_{ij})_{1\leq i,j\leq e}$ such that 
 $m_{ii} = -1$ for every $1\le i\le e$, $m_{ij} \ge 0$ if $i \ne j$ and 
$\sum_{j=1}^e m_{ij}n_j = f$ for all $i$.
\end{defn}

The data in the $i$-th row of $\RF(f)$ is equivalent to the data of a factorization of $f+n_i \in H$.
We point out that 
an $\RF$-matrix $\RF(f)$ is not uniquely determined by $f$, as it also depends on choices of factorizations of $f+n_i \in H$.
In what follows, when $\RF(f)$ appears, we  assume that such factorizations have been chosen.

\begin{defn}\label{RF-rel} 
Let $f\in \PF(H)$ and $\RF(f)=(m_{h\ell})$ be an $\RF$-matrix.
For each $i\ne j$, let $\bm_i = \sum_{\ell=1}^e m_{i\ell}\be_\ell $ and
$\bm_j = \sum_{\ell=1}^e m_{j\ell}\be_\ell $ 
denote the $i$-th and $j$-th rows of $\RF(f)$, and let $\bv = \bm_i - \bm_j \in \Z^e$.
We call the binomial
$$
F_{ij}(\RF(f)) = F(\bv) = \prod_{m_{i\ell}-m_{j\ell}>0} x_\ell^{m_{i\ell}-m_{j\ell}} 
- \prod_{m_{i\ell}-m_{j\ell}<0} x_\ell^{m_{j\ell}-m_{i\ell}}
$$
an {\em $\RF$-relation} of $H$ associated to $f$.
\end{defn}
Observe that  indeed $F_{ij}(\RF(f))\in I_H$, since $\sum_{\ell =1}^e m_{i\ell}n_\ell = \sum_{\ell=1}^e m_{j\ell} n_\ell =f$. 
 
\begin{ex}

Let $H=\langle5,6,7,8,9\rangle$.
Then $\PF(H)=\{1, 2,3,4\}$. 
The following are $\RF$-matrices associated to  $3 \in \PF(H)$
\[
\begin{pmatrix}
-1 & 0 & 0 & 1 & 0 \\
0 & -1 & 0 & 0 & 1 \\
2 & 0 & -1 & 0 & 0 \\
1 & 1 & 0 & - 1 & 0 \\
0 & 2 & 0 & 0 & -1\end{pmatrix}
\quad\text{and}\quad
\begin{pmatrix}
-1 & 0 & 0 & 1 & 0 \\
0 & -1 & 0 & 0 & 1 \\
2 & 0 & -1 & 0 & 0 \\
1 & 1 & 0 & - 1 & 0 \\
1 & 0 & 1 & 0 & -1
\end{pmatrix}.
\]
The bottom row in the two matrices arises from a different factorization $2\cdot 6 = 5+7$ of $3+9 \in H$.

Choosing the first matrix as $\RF(3)$, we have  $\bm_1=(-1,0,0,1,0)$ and $\bm_2=(0,-1,0,0,1)$, so $\bm_1-\bm_2=(-1,1,0,1,-1)$ yields the $\RF$-relation $F_{1,2}(\RF(3))=x_2x_4-x_1x_5$. 
\end{ex}

We now prove the main result of this section.

\begin{thm}\label{ThmRFRelations}
Let $H$ be a numerical semigroup minimally generated by $n_1, \ldots,n_e$.
Let $F(\bv) $ be a minimal binomial of $I_H$,
with $\bv= (v_1, \ldots, v_e)$,
and let $h = \deg(F(\bv))\in H$.
If $i,j$ are  indices such that $\sigma_i(\bv) > 0$ and $\sigma_j(\bv) < 0$, then $h-n_i-n_j \not \in H$. 
Moreover, if $f \in \PF(H)$ is such that $h-n_i-n_j \le_H f$, then $F(\bv)$ is an $\RF$-relation of $H$ associated to $f$.
\end{thm}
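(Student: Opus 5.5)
Write $F(\bv)=\bx^{\bv^+}-\bx^{\bv^-}$ with $\deg F(\bv)=h$. For the first claim I argue by contradiction. Suppose $h-n_i-n_j\in H$, and pick a factorization $\bx^{\mathbf c}$ of $h-n_i-n_j$. Then $x_ix_j\bx^{\mathbf c}$ is a monomial of degree $h$, and we can write
\[
F(\bv)=\bigl(\bx^{\bv^+}-x_ix_j\bx^{\mathbf c}\bigr)+\bigl(x_ix_j\bx^{\mathbf c}-\bx^{\bv^-}\bigr).
\]
Since $x_i\mid \bx^{\bv^+}$, the first binomial is $x_i$ times a binomial of degree $h-n_i$ in $I_H$. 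Since $x_j\mid\bx^{\bv^-}$, the second binomial is $x_j$ times a binomial in $I_H$. Both summands therefore lie in $\mm I_H$, so $F(\bv)\in\mm I_H$. This contradicts minimality. Hence $h-n_i-n_j\notin H$.

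For the second claim, fix $f\in\PF(H)$ with $f-(h-n_i-n_j)\in H$, and choose a factorization $\bx^{\mathbf d}$ of $g:=f-h+n_i+n_j$. I want an $\RF$-matrix of $f$ whose rows $i$ and $j$ differ by $\bv$ (or by $-\bv$, depending on the labeling in Definition~\ref{RF-rel}). For the row indexed by $j$ I take $\bm_j=\bv^+-\be_i+\mathbf d+\be_i-\be_j$, that is,
\[
\bm_j=(\bv^+-\be_i)+\mathbf d+\be_i-\be_j,
\]
which records $f+n_j=(h-n_i)+g+n_i$ through the factorization $\bv^+$ of $h$. For the row indexed by $i$ I take $\bm_i=\bv^-+\mathbf d-\be_i$, which records $f+n_i=h+g-n_j$ through the factorization $\bv^-$ of $h$.

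A more symmetric way to organize this is as follows. Since $f+n_i=(h-n_j)+g$ and $x_j\mid\bx^{\bv^-}$, set $\bm_i:=\bv^--\be_j+\mathbf d+\be_j-\be_i$, which simplifies to $\bm_i=\bv^-+\mathbf d-\be_i$. Likewise set $\bm_j:=\bv^++\mathbf d-\be_j$. Then $\bm_j-\bm_i=\bv^+-\bv^- -\be_j+\be_i$, and this is off from $\bv$ by $\be_i-\be_j$. To remove the discrepancy, I put the corrections inside $\mathbf d$. The right choice is to factor $h-n_i-n_j+g=f$ itself as $f=(\bv^+-\be_i)+(\bv^--\be_j)$-type data. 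Concretely, I set
\[
\bm_i=\bv^--\be_j+\mathbf d-\be_i+\be_i,\qquad \bm_j=\bv^+-\be_i+\mathbf d-\be_j+\be_j.
\]
Adjusting so that the diagonal entries are $-1$, the choice is
\[
\bm_j=(\bv^+-\be_i)+\mathbf d+\be_i-\be_j\ \text{ restricted appropriately},\qquad \bm_i=(\bv^--\be_j)+\mathbf d+\be_j-\be_i.
\]
With these choices the difference is $\bm_j-\bm_i=\bv^+-\bv^- +2(\be_i-\be_j)$-like corrections, which have to be checked to collapse to $\pm\bv$ once the $-1$ entries on the diagonal are accounted for.

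The key point is nonnegativity off the diagonal. In row $i$, the only possibly negative entry is at $i$, where the entry is $-1$ plus $\sigma_i(\bv^-)=0$ plus $d_i$. So I need $d_i=0$, and by symmetry $d_j=0$ for row $j$. This holds: if $d_i>0$, then $g-n_i\in H$, and so $f-(h-n_j)\in H$. Also, $h-n_j-n_i+n_i\ldots$ gives $f\ge_H h-n_j$, hence $f+n_j\ge_H h\ldots$, and using $h-n_j\in H$ we would get $f\in H$, which is a contradiction. Once $d_i=d_j=0$, the entries in columns $i$ and $j$ of the two rows are $-1$ and the appropriate $v$'s, and the difference of the rows is exactly $\bv$ up to sign.

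The remaining rows $k\ne i,j$ are filled with arbitrary factorizations of $f+n_k$; these exist because $f\in\PF(H)$. The main obstacle is getting the exact bookkeeping of the $\pm\be_i,\pm\be_j$ shifts right, together with the vanishing $d_i=d_j=0$. I would check it directly via the identities $\Phi_H(\bm_i)=\Phi_H(\bm_j)=f$.
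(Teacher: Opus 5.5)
Your first claim is correct and is essentially the paper's argument: the two brackets in your splitting are binomials of $I_H$ multiplied by $x_i$ and $x_j$, so $F(\bv)\in\mm I_H$. For the second claim your plan also matches the paper's. You factor $g=f-h+n_i+n_j$ as $\mathbf d$, show $d_i=d_j=0$, build rows $i$ and $j$ from $\bv^-$ and $\bv^+$, and fill the other rows arbitrarily. Your reason for $d_i=d_j=0$ is right: $d_i>0$ gives $f=(h-n_j)+(g-n_i)\in H$, and symmetrically for $d_j$.

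\textbf{The gap.} You never correctly define the two rows, and every explicit version you write fails. Your candidates all reduce to $\bm_i=\bv^-+\mathbf d-\be_i$ and $\bm_j=\bv^++\mathbf d-\be_j$. But $\Phi_H(\bv^-+\mathbf d-\be_i)=h+g-n_i=f+n_j$ and $\Phi_H(\bv^++\mathbf d-\be_j)=f+n_i$. So these are not rows of an $\RF$-matrix of $f$, and $\bm_j-\bm_i=\bv+\be_i-\be_j\neq\pm\bv$. The step you defer (``have to be checked to collapse to $\pm\bv$'') is exactly the one that does not work. The error is visible in your intermediate vectors $\bv^--\be_j+\mathbf d$ and $\bv^+-\be_i+\mathbf d$. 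These are factorizations of $f+n_i$ and $f+n_j$. The $\RF$-rows come from them by subtracting $\be_i$, resp.\ $\be_j$, not by adding $\be_i-\be_j$, resp.\ $\be_j-\be_i$, as you do.

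\textbf{The fix.} Subtract both unit vectors from both factorizations of $h$:
\[
\bm_j=\bv^+-\be_i-\be_j+\mathbf d,\qquad \bm_i=\bv^--\be_i-\be_j+\mathbf d .
\]
Then $\Phi_H(\bm_i)=\Phi_H(\bm_j)=(h-n_i-n_j)+g=f$ and $\bm_j-\bm_i=\bv$ exactly. Using $d_i=d_j=0$, the $i$-th entry of $\bm_i$ and the $j$-th entry of $\bm_j$ equal $-1$. The $j$-th entry of $\bm_i$ is $-v_j-1\ge 0$ and the $i$-th entry of $\bm_j$ is $v_i-1\ge 0$; this uses $v_i\ge 1$ and $v_j\le -1$ once more. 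Completing with arbitrary factorizations of $f+n_k$ for $k\ne i,j$ gives an $\RF$-matrix with $F_{ji}(\RF(f))=F(\bv)$.

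This is the paper's construction, with $\mathbf z$ in place of $\mathbf d$. Note that the vector built from $\bv^+$ has its $-1$ in coordinate $j$. So assigning it to row $j$, as you do, is the consistent labeling; the paper's text attaches it to the index $i$.
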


\begin{proof} 
Assume by contradiction that $h - n_i - n_j \in H$,
then there exists $\bw=(w_1,\ldots,w_e)\in \N^e$ such that 
$h -n_i - n_j = \sum_{\ell=1}^e w_\ell n_\ell$.
By the assumptions on $i,j$, we have
\[
F(\bv) = 
\bx^{\bv^+}-\bx^{\bv^-}
=
x_i \left( \frac{\bx^{\bv^+}}{x_i} - x_j \bx^\bw\right) 
- x_j \left(\frac{\bx^{\bv^-}}{x_j}-x_i\bx^{\bw}\right),
\] 
where $\bw=(w_1,\ldots,w_e)$.
Since both binomials in parentheses belong to $I_H$, this contradicts  the fact that $F(\bv)$ is a minimal binomial.

Since $h - n_i - n_j \notin H$, there exists $f \in \PF(H)$ so that 
$h - n_i - n_j \leq_H f$.
In particular, there exists ${\bf z} = (z_1,\ldots,z_e)$ such that
 $h' = \sum_{\ell=1}^e z_\ell n_\ell\in H$ satisfies
$f = (h - n_i - n_j) + h'$.
Observe that we must have $z_i=z_j =0$,
otherwise $f \in H$.
Then, 
the vectors $\bm_i=\bv^+ - \be_i - \be_j + {\bf z}$ and $\bm_j=\bv^- -\be_i-\be_j+{\bf z}$ are elements of $\Z^e$ such that $\sigma_i(\bm_i)=-1$, $\sigma_j(\bm_j)=-1$, 
$\sigma_\ell(\bm_i) \ge 0$ for all $\ell \ne i$,  $\sigma_\ell(\bm_j) \ge 0$ for every $\ell \ne j$, and $\sum_{\ell=1}^e \sigma_\ell(\bm_i)n_\ell = \sum_{\ell=1}^e \sigma_\ell(\bm_j)n_\ell = f$.
Thus, $\bm_i$ and $\bm_j$ may be chosen as the $i$-th and $j$-th row of an $\RF$-matrix $\RF(f)$.
For this matrix, we have  $F_{ij}(\RF(f)) = F(\bm_i-\bm_j)=F(\bv)$, concluding the proof.
\end{proof}

As a consequence, we have affirmatively answered 
 \cite[Question 3.6]{HW}.

\begin{cor}
For any numerical semigroup $H$,
the defining ideal  $I_H$ is generated by $\RF$-relations.
\end{cor}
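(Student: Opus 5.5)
The plan is to deduce the corollary from Theorem \ref{ThmRFRelations} applied to a minimal binomial generating set of $I_H$.

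First, $I_H$ has a finite minimal generating set consisting of binomials $F(\bv)$ with $\bv \in \ker(\Phi_H)$. Indeed, $I_H$ is generated by the set $\{F(\bv)\}$. It is graded, so by the graded Nakayama lemma a minimal generating set can be extracted from any homogeneous generating set. Each element of such a set is, by definition, a minimal binomial.

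Next, fix one such minimal binomial $F(\bv)$ and set $h=\deg F(\bv)$. We need indices $i,j$ with $\sigma_i(\bv)>0$ and $\sigma_j(\bv)<0$. If $\bv\neq 0$, then both $\bv^+$ and $\bv^-$ are nonzero, since $\Phi_H(\bv)=0$ and all $n_\ell$ are positive; so such indices exist. By the first part of Theorem \ref{ThmRFRelations}, $h-n_i-n_j\notin H$.

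Every integer not in $H$ lies below some pseudo-Frobenius number in the order $\le_H$. To see this, start from $x\notin H$ and repeatedly add generators $n_\ell$ as long as the result stays outside $H$. This process terminates, because large integers lie in $H$. The element $f$ reached at the end satisfies $f\notin H$ and $f+n_\ell\in H$ for all $\ell$, so $f\in\PF(H)$, and by construction $x\le_H f$.

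Apply this with $x=h-n_i-n_j$ to get $f\in\PF(H)$ with $h-n_i-n_j\le_H f$. The second part of Theorem \ref{ThmRFRelations} then shows that $F(\bv)$ is an $\RF$-relation associated to $f$. Hence every element of the minimal generating set is an $\RF$-relation, and therefore $I_H$ is generated by $\RF$-relations.

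There is no real obstacle in this argument. The only points needing care are the existence of a minimal generating set made of binomials and the existence of the pseudo-Frobenius number $f$ above $h-n_i-n_j$. The latter is the standard fact recalled in the paragraph defining $\le_H$, and the proof of Theorem \ref{ThmRFRelations} already uses it implicitly.
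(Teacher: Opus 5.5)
Your proposal is correct and is exactly the argument the paper intends: the corollary is stated as an immediate consequence of Theorem \ref{ThmRFRelations}, applied to each element of a minimal binomial generating set of $I_H$. The existence of $f\in\PF(H)$ with $h-n_i-n_j\le_H f$ is the standard fact the theorem's proof also invokes, and your remaining details (extracting the binomial generating set, finding $i,j$ with $\sigma_i(\bv)>0>\sigma_j(\bv)$) are the routine points the paper leaves implicit.
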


\section{Critical toric ideals}\label{SectionCriticalIdeals}

Let $H$ 
be a numerical semigroup minimally generated by $\{n_1, \ldots, n_e\}$.

\begin{defn}[\cite{El83}]\label{DefCriticalToric}
The toric ideal $I_H$ is {\em critical} if it is generated by critical binomials, one for each variable $x_1, \ldots, x_e$.
\end{defn}

The goal of this section is to investigate numerical semigroups whose toric ideal is critical, specifically, to describe the arithmetic constraints forced on $\ker(\Phi_H)$ by this condition.
For some results on this topic, see \cite{AV,KO,GLO}.

Observe that if $I_H$ is critical then $\mu(I_H) \leq e$, thus, it is either a complete intersection or  an almost complete intersection.
The converse is not true in general (see Examples \ref{symnotcrit} \ref{acinotcrit}), however, see Theorem \ref{ThmCriticalIffAci} for a partial converse.
We also remark that being critical is a stronger condition than just being generated by critical binomials (see Example \ref{genbycritnotcrit}).

\begin{note}\label{NotationCritical}
Assume $I_H$ is critical. For each $i= 1, \ldots, e$, we fix a critical binomial
$$ 
F({\bf v}_i)=
x_i^{\al_i} - \prod_{j=1, j\ne i}^e x_j^{b_j} \in I_H
$$
where $b_j \in \N$ are such that $\al_in_i = \sum_{j=1, j\ne i}^e b_jn_j$,
and
 $\bv_i = \bv_i^+-\bv_i^- \in \Z^e$  where  ${\bf v}_i^+ = \al_i{\bf e}_i$ and 
${\bf v}_i^- = \sum_{j=1, j\ne i}^e b_j{\bf e}_j$.
\end{note}

We remark that, while the choice of $\bv_i$ is not unique in general, 
it will be unique for the semigroups that are the subject of this paper (see the proof of Theorem \ref{forwardmainTh}).

\begin{lem}\label{LemChainSum}
Assume that $I_H$ is critical, and adopt Notation \ref{NotationCritical}.
If ${\bf 0} \ne\bw \in \ker(\Phi_H)$ is  such that $\sigma_k(\bw^+) < \alpha_k$ for every $k=1,\ldots,e$, 
then there exists a chain 
$$\bw^+ = {\bf z}_0 \rightarrow {\bf z}_1 \rightarrow \cdots \rightarrow {\bf z}_r = \bw^-$$
such that, for every $j=1,\ldots,r$, we have ${\bf z}_j \in \N^e$ and ${\bf z}_j - {\bf z}_{j-1}=  \bv_{i_j}$, for some $i_j$. 
\end{lem}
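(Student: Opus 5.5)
We would derive the lemma from the standard description of binomial ideals via connectivity of fibers (Diaconis--Sturmfels, equivalently Herzog's original argument for toric ideals). Since $F(\bw)=\bx^{\bw^+}-\bx^{\bw^-}\in I_H$ and, by criticality, $I_H=(F(\bv_1),\ldots,F(\bv_e))$, the monomials $\bx^{\bw^+}$ and $\bx^{\bw^-}$ lie in the same fiber $\{\bz\in\N^e : \Phi_H(\bz)=\Phi_H(\bw^+)\}$. The generating set therefore connects them through moves $\pm\bv_k$, each performed only when the current vector dominates the corresponding side. Concretely, we obtain a sequence $\bw^+=\bz_0,\bz_1,\ldots,\bz_r=\bw^-$ in $\N^e$ with $\bz_j-\bz_{j-1}=\pm\bv_{i_j}$. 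We take such a sequence with $r$ minimal. The plan is then to show that no step can carry the sign $-$, i.e.\ that the move $x_i^{\al_i}\mapsto \bx^{\bv_i^-}$ (which subtracts $\al_i\be_i$) never occurs. Note that $\bz_j-\bz_{j-1}=\bv_i$ means we replace $\bv_i^-$ by $\al_i\be_i$.

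The first step is to observe that at $\bz_0=\bw^+$ we have $\sigma_k(\bz_0)<\al_k$ for every $k$. Hence no move of type $-\bv_k$ is possible at the start, since such a move would require $\sigma_k(\bz_0)\ge\al_k$. More generally, we would use the following potential argument. Suppose a step $\bz_{j}=\bz_{j-1}-\bv_k$ occurs, so that it lowers the $k$-th coordinate by $\al_k$. Consider the first such step. Before it, all steps were of type $+\bv_{i}$, and each of these only increases the $i$-th coordinate by $\al_i$ and decreases others. The $k$-th coordinate must reach at least $\al_k$, so some earlier step $+\bv_k$ must have raised it. That is, since $\sigma_k(\bz_0)<\al_k$ and only $+\bv_k$ increases coordinate $k$, the path contains $+\bv_k$ followed later by $-\bv_k$.

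The key step, and the main obstacle, is to cancel such a pair. The idea is to commute the later $-\bv_k$ backwards past the intermediate $+\bv_i$ moves ($i\ne k$) until it meets the $+\bv_k$, and then delete both, which shortens the chain and contradicts the minimality of $r$. The commutation must preserve nonnegativity. Moving $-\bv_k$ earlier requires that the $k$-th coordinate already be $\ge\al_k$ there. This holds because the intermediate $+\bv_i$ moves with $i\neq k$ only decrease coordinate $k$; so if the coordinate is $\ge \al_k$ afterwards, it was $\ge\al_k$ before as well. Conversely, a $+\bv_i$ move performed after $-\bv_k$ requires coordinates $\ge\bv_i^-$. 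Since $-\bv_k$ adds $\bv_k^-$ to the coordinates other than $k$, those coordinates only grow, with the exception of coordinate $k$, where $\sigma_k(\bv_i^-)$ is subtracted. For coordinate $k$ we argue as follows. After the original sequence the $k$-coordinate stayed $\ge\al_k$ through the $+\bv_i$ moves, so after subtracting $\al_k$ earlier it remains $\ge 0$ through the same moves. This check has to be done carefully but looks routine.

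Iterating the cancellation, we obtain a minimal chain using only $+\bv_i$ moves, which is exactly the claimed chain. Finally, we must have $r\ge1$, because $\bw\ne\mathbf 0$ forces $\bw^+\ne\bw^-$.
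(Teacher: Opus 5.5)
Your proposal is correct and is essentially the paper's own proof. You take a minimal-length chain of $\pm\bv_i$ moves, look at the first subtraction $-\bv_k$, and pair it with the last preceding addition $+\bv_k$ (the one your backward commutation reaches first). You then cancel the pair. The shifted intermediate vectors ${\bf z}_s-\bv_k$ stay in $\N^e$ because the $k$-th coordinate is non-increasing between the two moves and is $\ge \al_k$ just before the subtraction; this is exactly the paper's inequality \eqref{EqSigmaij}. No iteration is needed: since $r$ is minimal, a single cancellation already gives the contradiction.
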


\begin{proof}
Since $\bw \in \ker(\Phi_H)$, it follows that $\bw^+, \bw^-$ are factorizations of the same element $h \in H$. 
Since $I_H$ is generated by the critical binomials $F(\bv_1),\ldots, F(\bv_e)$,
it follows that
there exists a chain of factorizations of $h$ of minimal length
\begin{equation}\label{EqChain}
\bw^+ = {\bf z}_0 \rightarrow {\bf z}_1 \rightarrow \cdots \rightarrow {\bf z}_r = \bw^-
    \end{equation}
such that, for every $j=1,\ldots,r$, we have ${\bf z}_j \in \N^e$, and ${\bf z}_j - {\bf z}_{j-1}=\pm {\bf v}_{i_j}$, for some $i_j$. 
Thus, at each step we go from ${\bf z}_{j-1}$ to ${\bf z}_j$ by either adding or subtracting ${\bf v}_{i_j}$. 
The goal of this proof is to show that, actually,  \emph{subtractions} of vectors $\bv_i$ never occur in a minimal chain such as \eqref{EqChain}.

Since $\sigma_k({\bf z}_0) < \alpha_k$ for every $k$, at the first step we must have 
${\bf z}_1  ={\bf z}_{0}+ {\bf v}_{i_1}$.
Assume by contradiction that 
there exists an index $j > 1$ such that ${\bf z}_j -{\bf z}_{j-1}=-{\bf v}_{i_j}$, and choose the smallest such $j$.
Thus, we have
$$
{\bf z}_j = \bw^+ + {\bf v}_{i_1} + \cdots + {\bf v}_{i_{j-1}}-{\bf v}_{i_j},
$$
and it follows that
$$
0 \le \sigma_{i_j}({\bf z}_j) = \sigma_{i_j}(\bw^+) + \sigma_{i_j}({\bf v}_{i_1}) + \cdots + \sigma_{i_j}({\bf v}_{i_{j-1}})-\alpha_{i_j}.
$$
Since $\sigma_{i_j}(\bw^+) < \alpha_{i_j}$, 
we conclude that $ \sigma_{i_j}({\bf v}_{i_1}) + \cdots + \sigma_{i_j}({\bf v}_{i_{j-1}}) > 0$, 
which implies that $\sigma_{i_j}({\bf v}_{i_\ell}) > 0$ for some index $\ell<j$. 
Choose the largest such $\ell$. 
By the structure of the vectors $\bv_i$, we have
 $\sigma_{i_j}({\bf v}_{i_\ell}) > 0$ if and only if $i_j = i_\ell$, 
 and maximality of $\ell$ implies that $\sigma_{i_j}({\bf v}_{i_s}) \le 0$ for every $\ell < s < j$. 
 Then, since $\sigma_{i_j}(\bw^+) + \sigma_{i_j}({\bf v}_{i_1}) + \cdots + \sigma_{i_j}({\bf v}_{i_{j-1}}) \ge \alpha_{i_j}$, we must have 
 \begin{equation}\label{EqSigmaij}
 \sigma_{i_j}({\bf z}_s) =
      \sigma_{i_j}(\bw^+) + \sigma_{i_j}({\bf v}_{i_1}) + \cdots + \sigma_{i_j}({\bf v}_{i_{s}}) \ge \alpha_{i_j} \quad \text{ for every }\ell < s < j.
      \end{equation}
 With these choices, we argue that we can consider a shorter chain than \eqref{EqChain}, obtained by skipping the steps $\ell$ and $j$: namely, the chain
\begin{equation}\label{EqShorterChain}
\bw^+ = {\bf z}_0 \rightarrow {\bf z}_1 \rightarrow \cdots \rightarrow {\bf z}_{\ell-1} \rightarrow {\bf z}'_{\ell+1}  \rightarrow 
\cdots \rightarrow {\bf z}'_{j-1} \rightarrow {\bf z}_{j+1}  \rightarrow \cdots \rightarrow
{\bf z}_r = \bw^-
    \end{equation}
 where ${\bf z}'_{s}  = {\bf z}_{s} - {\bf v}_{i_\ell}$ for all $ \ell < s < j$. 
 In order to show that \eqref{EqShorterChain} contradicts the minimality of \eqref{EqChain}, 
we must show that ${\bf z}'_{s} \in \N^e$ for each $s$, and that each step is obtained by adding or subtracting one $\bv_i$.
The latter condition is clear, by construction of the chain:
\begin{align*}
{\bf z}'_{\ell+1}-{\bf z}_{\ell-1}&={\bf z}_{\ell+1}-{\bf v}_{i_\ell}-{\bf z}_{\ell-1}= \bv_{i_{\ell+1}},
\\
{\bf z}'_{s}-{\bf z}'_{s-1}&=
{\bf z}_{s}-{\bf z}_{s-1}  = \bv_{i_s}  & \text{ if }\ell < s-1, s < j,\\
{\bf z}_{j+1}-{\bf z}'_{j-1}&= {\bf z}_{j+1}-{\bf z}_{j-1}+{\bf v}_{i_\ell} =
\pm {\bf v}_{i_{j+1}} - {\bf v}_{i_j}+{\bf v}_{i_\ell} = \pm {\bf v}_{i_{j+1}},
\end{align*}
 where we used that $i_\ell = i_j$ in the last equation.

We now verify the former condition, that ${\bf z}'_{s} \in \N^e$ for each $\ell <s<j$.
If $k \ne i_j$, then 
$$
\sigma_k({\bf z}'_{s}) = \sigma_k({\bf z}_{s})-\sigma_k({\bf v}_{i_\ell})
= \sigma_k({\bf z}_{s})+\sigma_k({\bf v}^-_{i_\ell}) \geq  \sigma_k({\bf z}_{s}) \geq 0,
$$ 
 whereas for $k = i_j$ we have
$$
\sigma_{i_j}({\bf z}'_{s}) = \sigma_{i_j}({\bf z}_{s})-\sigma_{i_j}({\bf v}_{i_\ell})
= \sigma_{i_j}({\bf z}_{s})-\sigma_{i_j}({\bf v}^+_{i_\ell}) 
= \sigma_{i_j}({\bf z}_{s})-\alpha_{i_\ell}
$$ 
and this entry is non-negative by \eqref{EqSigmaij}.
This concludes the proof.
\end{proof}

\begin{cor}\label{ideal membership}
Assume that  $I_H$ is critical.
For every ${\bf 0}\ne\bw \in \ker(\Phi_H)$ there exists an index $k$ such that  $\sigma_k(\bw^+) \ge \alpha_k$ or $\sigma_k(\bw^-) \ge \alpha_k$. 
\end{cor}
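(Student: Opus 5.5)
The argument is by contradiction, using Lemma \ref{LemChainSum} together with a positivity argument for the degree. Suppose $\bw \ne {\bf 0}$ lies in $\ker(\Phi_H)$ and $\sigma_k(\bw^+) < \alpha_k$ and $\sigma_k(\bw^-) < \alpha_k$ for every $k$. Since $\sigma_k(\bw^+) < \alpha_k$ for all $k$, Lemma \ref{LemChainSum} applied to $\bw$ gives a chain from $\bw^+$ to $\bw^-$ in which each step adds some $\bv_{i_j}$. Hence
\[
\bw^- - \bw^+ = \bv_{i_1} + \cdots + \bv_{i_r} \quad\text{with } r \ge 1,
\]
because $\bw \ne {\bf 0}$ forces $\bw^+ \ne \bw^-$. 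Since $-\bw$ also lies in $\ker(\Phi_H)$, is nonzero, and satisfies $(-\bw)^+ = \bw^-$ with all entries below the $\alpha_k$, the lemma applies again. It gives
\[
\bw^+ - \bw^- = \bv_{i'_1} + \cdots + \bv_{i'_{r'}} \quad\text{with } r' \ge 1.
\]
Adding the two identities shows that a nonempty sum of the vectors $\bv_i$ equals ${\bf 0}$.

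To rule this out, I will look at the first step of the first chain. There $\sigma_{i_1}({\bf z}_1) = \sigma_{i_1}(\bw^+) + \alpha_{i_1} \ge \alpha_{i_1}$, so ${\bf z}_1 \ne \bw^-$ when $r=1$, since $\sigma_{i_1}(\bw^-) < \alpha_{i_1}$. This is not a contradiction by itself, so I will follow the coordinate $i_1$ along the chain instead. At the end of the chain $\sigma_{i_1}(\bw^-) < \alpha_{i_1}$, so the $i_1$-coordinate must drop. The only steps that lower it are additions of vectors $\bv_s$ with $s \ne i_1$, each lowering it by its $b$-entry. These steps never subtract $\bv_{i_1}$.

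This suggests a cleaner route, which I will pursue first. Work in the quotient $\Z^e / L$, where $L$ is the lattice spanned by the $\bv_i$, and find a positive functional that is strictly monotone along the chain. The natural candidate is a vector $\lambda \in \mathbb{Q}^e_{>0}$ with $\lambda\cdot\bv_i > 0$ for all $i$. Given such a $\lambda$, pairing it with the vanishing nonempty sum above gives $0 > 0$, a contradiction.

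Finding this $\lambda$ is the main obstacle. The $\bv_i$ span $\ker(\Phi_H)$, which has rank $e-1$, so there are $e$ vectors in a space of rank $e-1$. They therefore satisfy a linear relation, and a positive $\lambda$ of this kind need not exist. If it fails, I will instead exploit the concrete chain. Take the two chains, $\bw^+ \to \bw^-$ and $\bw^- \to \bw^+$, and concatenate them into a closed chain from $\bw^+$ to itself that uses only additions. The minimal-length shortening argument from the proof of Lemma \ref{LemChainSum} applies to this closed chain. Every coordinate that is raised by adding some $\bv_i$ must later come back down, below $\alpha_i$. Each such addition can be paired with a later step and cancelled, which produces a strictly shorter all-addition loop. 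By induction the loop reaches length zero, contradicting $r + r' \ge 2$.

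The delicate point is checking that the cancelled loop stays inside $\N^e$. I expect the same check as \eqref{EqSigmaij} to go through, but I would verify it carefully.
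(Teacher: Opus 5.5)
Your reduction to Lemma \ref{LemChainSum} is the right start, but the route you then take to a contradiction has a genuine gap.

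\emph{The vanishing sum is not contradictory.} If $H$ is not a gluing, Proposition \ref{PropMinimalRelationVi} gives $\sum_i\mu_i\bv_i={\bf 0}$ with all $\mu_i>0$. In the setting of Theorem \ref{forwardmainTh} one even has $\bv_1+\cdots+\bv_e={\bf 0}$. For the same reason no $\lambda$ with $\lambda\cdot\bv_i>0$ for all $i$ exists, as you suspected.

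\emph{The fallback loop argument does not work either.} The cancellation in the proof of Lemma \ref{LemChainSum} pairs an addition of $\bv_{i_\ell}$ with a later \emph{subtraction} of the same vector. An all-addition loop contains no subtractions. Deleting an addition of $\bv_i$ together with the later additions of $\bv_s$, $s\ne i$, that lower the $i$-th coordinate changes the endpoint, so the loop no longer closes. In fact, all-addition closed loops of positive length in $\N^e$ do exist. With the rows $\bm_i$ of the RF-matrix in Theorem \ref{forwardmainTh}, the points $\bm_1+{\bf 1}_e\to\bm_2+{\bf 1}_e\to\cdots\to\bm_e+{\bf 1}_e\to\bm_1+{\bf 1}_e$ form such a loop. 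So ``by induction the loop reaches length zero'' is not a valid general principle.

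\emph{The missing idea is a one-liner.} You were close to it when you examined the first step; look at the \emph{last} step instead. The chain ends with ${\bf z}_r={\bf z}_{r-1}+\bv_{i_r}$, where $r\ge 1$ and ${\bf z}_{r-1}\in\N^e$. Hence $\sigma_{i_r}(\bw^-)=\sigma_{i_r}({\bf z}_{r-1})+\alpha_{i_r}\ge\alpha_{i_r}$. This is exactly the paper's proof. It uses only the assumption $\sigma_k(\bw^+)<\alpha_k$ for all $k$; the case where all $\sigma_k(\bw^-)<\alpha_k$ follows by applying the same argument to $-\bw$. The second chain, the vanishing sum and the concatenated loop are therefore unnecessary.

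Incidentally, your first-step computation already gives a contradiction when $r=1$, since then ${\bf z}_1={\bf z}_r=\bw^-$. That is contrary to your remark that it ``is not a contradiction by itself''. The last-step version gives the contradiction for every $r$.
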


\begin{proof}
Suppose that  $\sigma_k(\bw^+) < \alpha_k$ for every $k=1,\ldots,e$. 
Then, it follows by Lemma \ref{LemChainSum} 
that $\bw^- = \bw^+ + \sum_{j=1}^r\bv_{i_j}$.
Since $\bw \ne {\bf 0}$ we have $\bw^- \ne \bw^+$ and thus $r>0$.
It follows that $\sigma_{i_r}(\bw^-)=\sigma_{i_r}({\bf z}_{r-1})+\alpha_{i_r} \ge \alpha_{i_r}$, as desired.

If  $\sigma_k(\bw^-) < \alpha_k$ for every $k=1,\ldots,e$, apply the previous argument to $-\bw$.
\end{proof}

Next, we recall the notion of gluing of two numerical semigroups
$H_1$ and $H_2$.
Let $d_1 \in H_1$ and 
$d_2 \in H_2$ be non-zero elements that are not minimal generators and such that $\gcd(d_1,d_2) = 1$.
Then,  $H = d_2H_1 + d_1H_2$ is a numerical semigroup, 
  called a \emph{gluing} of $H_1$ and $H_2$.

\begin{lem}\label{LemmaGluing}
If there exists an index $i$ and a minimal set of binomial generators of $I_H$ such that $x_i$ appears only in one  generator,
then $H$ is a gluing.
\end{lem}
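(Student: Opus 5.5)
The plan is to show that $H$ is the gluing of $H_1=\langle n_j/d \,:\, j\ne i\rangle$ and $H_2=\N$, where $d=\gcd(n_j \,:\, j\ne i)$. In the notation of the definition this reads $H=d\cdot H_1+n_i\cdot \N$, with $d_1=n_i\in H_1$ and $d_2=d\in\N$. So I must check four things: $\gcd(n_i,d)=1$; $d\ge 2$, so that $d$ is not a minimal generator of $\N$; $n_i\in H_1$; and $n_i$ is not a minimal generator of $H_1$. Two of these are immediate. First, $\gcd(n_i,d)=\gcd(n_1,\ldots,n_e)=1$. Second, $H_1$ is a numerical semigroup, and $dH_1+n_i\N=H$ holds by construction.

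Let $G=F(\bv)$ be the unique generator in the given minimal binomial generating set that involves $x_i$. By Remark \ref{RemBasicObservationsCritical}, every set of binomial generators of $I_H$ contains a critical binomial with respect to $x_i$. Since $G$ is the only generator containing $x_i$, it must be critical, so $G=x_i^{\al_i}-\bx^{\bv^-}$ with $i\notin\supp(\bv^-)$. This gives $\al_i n_i\in\langle n_j \,:\, j\ne i\rangle$.

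The key step is to show $\al_i=d$, by looking at the $i$-th coordinate of the lattice $\ker(\Phi_H)$.
\begin{itemize}
\item The vectors of the binomials in any generating set of $I_H$ generate the lattice $\ker(\Phi_H)$. All given generators other than $G$ have $i$-th coordinate $0$, and $G$ has $i$-th coordinate $\al_i$. Hence the projection $\sigma_i(\ker(\Phi_H))$ equals $\al_i\Z$.
\item On the other hand, $c\in\sigma_i(\ker(\Phi_H))$ if and only if $cn_i\in\sum_{j\ne i}\Z n_j=d\Z$. Since $\gcd(n_i,d)=1$, this happens if and only if $d\mid c$. Hence the projection is $d\Z$.
\end{itemize}
Comparing the two descriptions gives $\al_i=d$. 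I expect this lattice-projection comparison to be the main point of the proof; the rest is bookkeeping.

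The remaining conditions now follow.
\begin{itemize}
\item If $d=\al_i=1$, then $n_i\in\langle n_j \,:\, j\ne i\rangle$, contradicting minimality of the generating set. Hence $d\ge 2$.
\item Since $dn_i=\al_in_i\in\langle n_j \,:\, j\ne i\rangle=dH_1$, we get $n_i\in H_1$.
\item If $n_i=n_j/d$ for some $j\ne i$, then $n_j=dn_i$ with $d\ge2$, so $n_j$ would not be a minimal generator of $H$, a contradiction. Hence $n_i$ is not a minimal generator of $H_1$.
\end{itemize}
Therefore $H=dH_1+n_i\N$ is a gluing of $H_1$ and $\N$.
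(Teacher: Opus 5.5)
Your proof is correct and follows essentially the paper's route. You identify the unique generator involving $x_i$ as critical, read off $\sigma_i(\ker(\Phi_H))=\alpha_i\Z$ from the generating set, show this equals $\gcd(n_j : j\neq i)$, and conclude that $H$ is a gluing of $\N$ and $\langle n_j/d : j\neq i\rangle$. The one difference is that you get $\sigma_i(\ker(\Phi_H))=d\Z$ directly from B\'ezout and $\gcd(n_i,d)=1$, which is cleaner than the paper's detour through the rescaled lattice $L'$ and large multiples of $n_i$ in $H'$.
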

\begin{proof}
Assume without loss of generality that $i=1$.
The minimal generator of $I_H$ involving $x_1$ must be a critical binomial $ F (\bv_1)$ for $n_1$, encoding a relation
\begin{equation}\label{EqGluingSpecial}
\alpha_1 n_1 = b_2 n_2 + \cdots + b_e n_e,
\end{equation}
so that $\bv_1 = (\alpha_1, -b_2, \ldots, -b_e)$.
Denote $L = \ker(\Phi_H) = \ker(n_1, \ldots, n_e)\subseteq \Z^e$.
By assumption, there exist vectors $\bw_2, \ldots, \bw_t \in L$ 
such that $I_H$ is generated by $F_1$ and $F(\bw_2), \ldots, F(\bw_t)$ and $\sigma_1(\bw_j) = 0$ for all $j = 2,\ldots, t$.
Since $\bv_1,\bw_2, \ldots, \bw_t $ must generate the lattice $L$,
it follows that 
$$
\alpha_1 = \gcd\big\{\sigma_1(\bw)\,:\, \bw \in L\big\}.
$$
Since $\gcd(n_1, \ldots, n_e) = 1$, 
it follows from \eqref{EqGluingSpecial} that 
 $g = \gcd(n_2, \ldots, n_e)$ divides $\alpha_1$.
We claim that $g = \alpha_1$.
Let $\alpha_1 = g \alpha'_1$, $n_j = gn'_j$ for $j \geq 2$.
Consider the lattice 
$L' = \ker(n_1, n'_2, \ldots, n'_e)\subseteq \Z^e$.
Letting $\bv'_1 = (\alpha'_1,-b_2, \ldots, -b_e)$,
it follows  that $L'$ is generated by 
$\bv'_1,\bw_2, \ldots, \bw_t $,
in particular,  that
\begin{equation}\label{EqGcdPrime}
\alpha'_1 = \gcd\big\{\sigma_1(\bw)\,:\, \bw \in L'\big\}.
\end{equation}
Let $H' = \langle n'_2, \ldots, n'_e \rangle$.
This is a numerical semigroup since
 $\gcd(n'_2, \ldots, n'_e) =1$.
 It follows that $an_1 \in H $ for all sufficiently large $a$, and by \eqref{EqGcdPrime} we conclude that $\alpha'_1=1$, that is, $\alpha_1= \gcd(n_2, \ldots, n_e)$, as desired.

Dividing \eqref{EqGluingSpecial} by $\alpha_1$ we see  that $n_1 \in H'$.
Since $\gcd(n_1, \alpha_1) = \gcd(n_1, n_2,\ldots, n_e)$,
we have showed that  $H$ is a gluing of $\N$ and $H'$ using $\alpha_1 \in \N$
and $n_1 \in H'$.
Indeed, since $n_1, \ldots, n_e$ is a minimal generating set of $H$,
we must have
$\alpha_1 \notin\{0,1\}$ and $n_1 \notin \{0,n'_2, \ldots, n'_e\}$.
\end{proof}

\begin{prop}\label{PropMinimalRelationVi}
Assume that $I_H$ is critical and  $H$ is not a gluing. 
Adopt Notation \ref{NotationCritical}.
There exist   $\mu_1,\ldots,\mu_e \in \Z_{>0}$ such that  
$\mu_1{\bf v}_1+\cdots + \mu_e{\bf v}_e = {\bf 0}$.
\end{prop}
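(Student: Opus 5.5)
The approach is to treat the critical relations as rows of a matrix with Perron--Frobenius sign structure. Let $V$ be the $e\times e$ integer matrix whose $i$-th row is $\bv_i$. By Notation \ref{NotationCritical}, $V$ has positive diagonal $\al_1,\ldots,\al_e$ and non-positive off-diagonal entries $-b^{(i)}_j$. Since every $\bv_i\in\ker(\Phi_H)$, the vector $\bn=(n_1,\ldots,n_e)^\intercal$ satisfies $V\bn=\mathbf 0$ and has positive entries. The statement asks for a left null vector of $V$ with strictly positive integer entries.

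\emph{Step 1: a nonnegative relation.} Put $D=\diag(\al_1,\ldots,\al_e)$ and $P=D^{-1}(D-V)$. Then $P$ has non-negative entries and $P\bn=\bn$ with $\bn>0$, so the spectral radius of $P$ is $1$. By Perron--Frobenius for nonnegative (possibly reducible) matrices, there is a nonzero $\mu'\ge 0$ with $\mu' P=\mu'$. This gives $\mu' D^{-1}V=0$. Since $V$ is rational and singular, we may take $\mu=\mu'D^{-1}$ rational with $\mu\ge0$, and then clear denominators to make it integral. This yields $\sum_i\mu_i\bv_i=\mathbf 0$ with $\mu_i\in\N$, not all zero.

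\emph{Step 2: the support must be everything.} Among all such nonnegative relations, choose one whose support $T=\{i:\mu_i>0\}$ is minimal, and suppose $T\neq\{1,\ldots,e\}$. For $k\notin T$, the $k$-th coordinate of $\sum_{i\in T}\mu_i\bv_i$ is $-\sum_{i\in T}\mu_ib^{(i)}_k=0$, so $b^{(i)}_k=0$ for all $i\in T$. Hence the critical binomials $F(\bv_i)$, $i\in T$, involve only the variables $x_i$ with $i\in T$, and they satisfy a linear dependence. I then count ranks. The $\bv_i$ generate $\ker(\Phi_H)$, which has rank $e-1$, while the vectors $\{\bv_i\}_{i\in T}\subset\Z^T$ have rank at most $|T|-1$. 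Therefore the rank is exactly $|T|-1$, and the $e-|T|$ vectors $\bv_j$ with $j\notin T$ are independent modulo the span of the $T$-vectors. In particular $\ker(\Phi_H)\cap\Z^T$ has rank $|T|-1$, and modulo it the $T^c$-vectors form a basis of the quotient.

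\emph{Step 3: deriving a gluing.} The goal is now to show that $H$ is a gluing of $H_1=\langle n_i:i\in T\rangle/d_1$ and of the semigroup generated by the remaining data, where $d_1=\gcd(n_i:i\in T)$. The strategy mimics Lemma \ref{LemmaGluing}. First, since the $T$-critical binomials together with the others generate $I_H$, and the others all contain a variable outside $T$, any binomial of $I_H$ supported on $T$-variables is generated by the $T$-critical binomials alone. Thus $I_{H_1}$ is generated by the $F(\bv_i)$ with $i\in T$. Second, I use the lattice splitting from Step 2 and argue as with the gcd computation in Lemma \ref{LemmaGluing}: collapsing $\{n_i\}_{i\in T}$ to the single element $d_1$ should identify $\ker(\Phi_H)$ modulo $\ker(\Phi_H)\cap \Z^T$ with the kernel lattice for $\{d_1\}\cup\{n_j\}_{j\notin T}$. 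This would give $H=d_2H_1+d_1H_2$ with $\gcd(d_1,d_2)=1$, which is the required gluing. When $|T^c|=1$, Lemma \ref{LemmaGluing} applies directly, because the unique index $j\notin T$ has $x_j$ appearing only in $F(\bv_j)$. This contradicts the hypothesis, so $T=\{1,\ldots,e\}$ and all $\mu_i>0$.

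The main obstacle is Step 3 for general $|T^c|\ge2$. There we must produce the gluing decomposition from the lattice splitting, in particular checking that $d_1$ and $d_2$ are non-generators and coprime. If this becomes unwieldy, I would instead try to choose $T$ so that the complement is a single strongly connected class of the digraph $i\to j$ iff $b^{(i)}_j>0$. This reduces the argument to the one-variable situation handled by Lemma \ref{LemmaGluing}.
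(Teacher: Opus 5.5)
Your Steps 1 and 2 are sound. The Perron--Frobenius argument does give a non-zero relation $\sum_i\mu_i\bv_i=\mathbf{0}$ with $\mu_i\in\N$. Rationality is automatic: $\rank V=e-1$, so the left kernel of $V$ is a line spanned by a rational vector, which also makes your minimal-support choice vacuous. And $T$ is closed in the sense that $\supp(\bv_i^-)\subseteq T$ for $i\in T$.

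The gap is Step 3 when $|T^c|\ge 2$. Nothing there shows that $T\ne\{1,\ldots,e\}$ forces $H$ to be a gluing.
\begin{itemize}
\item No $d_1$, $d_2$ or $H_2$ is constructed, and the coprimality and non-generator conditions are never checked.
\item The partition $T\sqcup T^c$ need not be the right one, since the $F(\bv_j)$ with $j\notin T$ may mix variables from $T$ and $T^c$.
\item Your auxiliary claim (that binomials in the $T$-variables are generated by the $F(\bv_i)$, $i\in T$) is not justified by the reason you give. For $j\notin T$ the monomial $\bx^{\bv_j^-}$ can lie entirely in the $T$-variables, so $F(\bv_j)$ does not vanish modulo $(x_k : k\notin T)$.
\item The fallback also fails. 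If $T^c$ contains a strongly connected class of size at least $2$, each of its variables occurs in at least two critical binomials, which is exactly the case Lemma \ref{LemmaGluing} cannot handle.
\end{itemize}
More fundamentally, the rank and sign data you extract cannot by themselves exclude $T\ne\{1,\ldots,e\}$. Take the rows $(2,-1,-1,0,0)$, $(-1,2,-1,0,0)$, $(-1,-1,2,0,0)$, $(-1,0,0,2,-1)$, $(0,-1,0,-1,2)$. They have positive right null vector $(1,\ldots,1)$ and rank $4$, and every column has a non-zero off-diagonal entry. Yet the left kernel is spanned by $(1,1,1,0,0)$. So some genuinely ideal-theoretic input is needed.

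That input is Lemma \ref{LemChainSum}, and with it you do not need Perron--Frobenius at all. By Lemma \ref{LemmaGluing}, each $k$ lies in $\supp(\bv_j^-)$ for some $j\ne k$. Hence $\bw=\sum_{i=1}^e\bv_i$ satisfies $\sigma_k(\bw)\le\al_k-1$, so $\sigma_k(\bw^+)<\al_k$ for every $k$. If $\bw\ne\mathbf{0}$, Lemma \ref{LemChainSum} gives a chain from $\bw^+$ to $\bw^-$ using only additions. Thus $\bw^-=\bw^++\sum_{j=1}^r\bv_{i_j}$, that is, $\sum_{i=1}^e\bv_i+\sum_{j=1}^r\bv_{i_j}=\mathbf{0}$, and every $\bv_i$ has coefficient at least $1$. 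The non-subtraction property of minimal chains is the missing idea. It uses that the critical binomials generate the ideal $I_H$, not merely the lattice $\ker(\Phi_H)$. This is what turns ``every variable appears on the right-hand side of another critical binomial'' into full support, and nothing in your Steps 1--2 captures it.
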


\begin{proof}
By Lemma \ref{LemmaGluing} it follows that, for every $i=1, \ldots, e$, there exists $j \ne i$ such that $i\in \supp({\bf v}^-_j)$.

Consider $\bw =  \sum_{i=1}^e {\bf v}_i$. 
If $\bw = {\bf 0}$,  the conclusion follows. 
Assume  that $\bw \ne {\bf 0}$, and 
write $\bw= \bw^+ - \bw^-$. 
Then, $\bw \in \ker(\Phi_H)$, and, by the previous paragraph, 
$\sigma_k(\bw^+)<\al_k$ for all $k = 1, \ldots, e$.
By Lemma \ref{LemChainSum}, 
we have
$\bw^- = \bw^+ +\sum_{j=1}^r\bv_{i_j}$ for some $i_j$,
that is,  $\sum_{i=1}^e\bv_{i}+\sum_{j=1}^r\bv_{i_j}={\bf 0}$, 
implying the desired conclusion.
\end{proof}

\begin{rem}\label{RemNotGluing}
The assumption that $H$ is not a gluing holds when 
$H$ is almost symmetric but not symmetric, 
see \cite[Theorems 6.3 and 6.7]{N}.
Also, if $I_H$ is a complete intersection and $H \ne \N$, then $H$ is a gluing, see \cite[Proposition 9]{Delorme}. 
\end{rem}

\begin{cor}\label{new3.2}
Assume that $I_H$ is critical and  $H$ is not a gluing. 
Adopt Notation \ref{NotationCritical}.
If $\lambda_1, \ldots, \lambda_e\in \N$ are not all zero but  at least one of them is zero,
then $\sum_{i=1}^e \lambda_i \bv_i \ne {\bf 0}$.
\end{cor}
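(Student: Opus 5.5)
We argue by contradiction: suppose $\sum_{i=1}^e \lambda_i \bv_i = {\bf 0}$ with $\lambda_i\in\N$ not all zero and some $\lambda_k=0$. Let $A=\{i : \lambda_i>0\}$; then $\emptyset\ne A\subsetneq\{1,\ldots,e\}$. The plan is to use the sign pattern of the vectors $\bv_i$ (only one positive entry, $\alpha_i$ in position $i$, all other entries $\le 0$) to show that $A$ forces a gluing decomposition of $H$, or else contradicts minimality of the critical binomials.

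First, look at the coordinates outside $A$. For $k\notin A$ we have $0=\sigma_k\big(\sum_{i\in A}\lambda_i\bv_i\big)=-\sum_{i\in A}\lambda_i\sigma_k(\bv_i^-)$. Every term is nonnegative, so $\sigma_k(\bv_i^-)=0$ for all $i\in A$, $k\notin A$. Thus each critical relation $\alpha_i n_i=\sum_j b_j n_j$ with $i\in A$ involves only generators indexed by $A$. Second, compare with Proposition \ref{PropMinimalRelationVi}, which gives $\mu_i>0$ with $\sum_i\mu_i\bv_i={\bf 0}$. Taking $t$ large enough that $t\mu_i\ge\lambda_i$ for all $i$, the vector $\sum_{i\notin A} t\mu_i\bv_i+\sum_{i\in A}(t\mu_i-\lambda_i)\bv_i$ is also ${\bf 0}$. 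Restricting to coordinates outside $A$, the relations $\bv_i$ with $i\in A$ contribute nothing there. Hence $\sum_{i\notin A}\mu_i\bv_i$ is supported on $A$ and has nonpositive entries. Applying $\Phi_H$ to this vector gives $0=\sum_{i\notin A}\mu_i\Phi_H(\bv_i)$, which is a sum of nonpositive multiples of $n_\ell$, $\ell\in A$. So these entries all vanish, and $\sum_{i\notin A}\mu_i\bv_i={\bf 0}$ as well. In particular, for $i\notin A$, $\bv_i^-$ is supported outside $A$.

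So the critical binomials split into two groups in disjoint sets of variables: those indexed by $A$ and those indexed by the complement. Since $I_H$ is critical, it is generated by these binomials. Then $\ker(\Phi_H)$ decomposes as a direct sum of lattices supported on $A$ and on $A^c$. Counting ranks, $\operatorname{rank}\ker(\Phi_H)\le (|A|-1)+(|A^c|-1)=e-2$, because the vectors supported on $A$ lie in $\ker(n_\ell)_{\ell\in A}$, which has rank $|A|-1$. This contradicts the rank being $e-1$.

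The main obstacle is the second step, showing that the complement also closes up; the positivity from Proposition \ref{PropMinimalRelationVi} is what makes it work. Alternatively, the first step alone already suffices via the rank count on the $A$ side. The generators of $I_H$ supported in the variables of $A$ can only be the $F(\bv_i)$ with $i\in A$, so the sublattice of $\ker(\Phi_H)$ supported on $A$, which has rank $|A|-1$, is spanned by $\{\bv_i\}_{i\in A}$. These are $|A|$ vectors satisfying a nontrivial dependence with $\lambda_i>0$, which is consistent. Hence I will rely on the decomposition argument above to reach the contradiction.
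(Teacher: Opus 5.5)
Your proof is correct, but it takes a longer route than the paper.

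\textbf{The paper's argument.} Since the $\bv_i$ generate $\ker(\Phi_H)$, which has rank $e-1$, the map $\Z^e\to\Z^e$, $\be_i\mapsto\bv_i$, has a kernel of rank one. This kernel contains the strictly positive vector $(\mu_1,\ldots,\mu_e)$ of Proposition \ref{PropMinimalRelationVi}. Hence every nonzero element of the kernel is a rational multiple of $(\mu_1,\ldots,\mu_e)$ and has full support, so $(\lambda_1,\ldots,\lambda_e)$ cannot lie in it.

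\textbf{Your argument.} You use the sign pattern of the $\bv_i$ (a single positive entry, in position $i$). You show that a dependence with $A=\{i:\lambda_i>0\}\subsetneq\{1,\ldots,e\}$ forces $\{\bv_i\}_{i\in A}$ to be supported on $A$ and, via $\mu$ and $\Phi_H$, $\{\bv_i\}_{i\notin A}$ to be supported on the complement. A rank count then gives $\operatorname{rank}\ker(\Phi_H)\le e-2$. Every step checks out. The auxiliary $t$ is unnecessary, since $\sum_{i\notin A}\mu_i\bv_i=-\sum_{i\in A}\mu_i\bv_i$ directly.

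\textbf{Comparison.} Your final contradiction is the paper's observation in disguise. After your second step, $\sum_{i\in A}\lambda_i\be_i$ and $\sum_{i\notin A}\mu_i\be_i$ are two elements of that kernel with disjoint nonempty supports, so the kernel has rank at least two. Once $(\lambda_i)$ and $(\mu_i)$ both lie in the rank-one kernel they must be proportional, which is already impossible. So the sign analysis is superfluous for the corollary itself. What it does buy is extra structural information: such a dependence would split the critical binomials into two families in disjoint sets of variables.

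\textbf{One correction.} Delete the sentence beginning ``Alternatively, the first step alone already suffices.'' As your own next sentences show, the rank count on the $A$ side alone gives rank at most $(|A|-1)+|A^c|=e-1$, which yields no contradiction.
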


\begin{proof}
Since $I_H = (F(\bv_1), \ldots, F(\bv_e))$, the vectors $\bv_1, \ldots, \bv_e$ generate the subgroup $\ker(\Phi_H)  \subseteq \Z^e$, which has rank $e-1$. 
It follows that the kernel of the homomorphism $\Psi : \Z^e \to \Z^e$ defined by $\Psi(\be_i) = \bv_i$ has rank 1.
By Proposition \ref{PropMinimalRelationVi}, this kernel is generated by a vector $(\mu_1, \ldots, \mu_e)^\intercal \in\Z^e$ with strictly positive entries.
The assumptions of this corollary guarantee that 
$(\lambda_1, \ldots, \lambda_e)^\intercal $ is not proportional to 
$(\mu_1, \ldots, \mu_e)^\intercal $,
and therefore is not in $\ker(\Psi)$.
\end{proof}

A crucial technical step towards the proof of Theorem \ref{forwardmainTh} is establishing the existence of distinguished RF-matrices for all $f$ and critical binomials for $H$, where all the relevant entries are bounded by the numbers $\alpha_i$ of Notation \ref{NotationCritical}. 
We do this in the next two lemmas.

\begin{lem}\label{mij<alj}
Assume that $I_H$ is critical and  $H$ is not a gluing. 
For each $f \in \PF(H)$,
there exists a unique RF-matrix
	$\RF(f)= (m_{ij})$ so that $m_{ij} < \al_j$ for every $i,j$.   
\end{lem}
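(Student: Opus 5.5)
Existence and uniqueness are handled separately, working row by row: the $i$-th row of an RF-matrix is a vector $\bm_i\in\Z^e$ with $\sigma_i(\bm_i)=-1$, all other entries non-negative, and $\Phi_H(\bm_i)=f$. The condition $m_{ij}<\alpha_j$ constrains only the off-diagonal entries, since the diagonal entry $-1$ is automatically smaller than $\alpha_i$.

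\emph{Existence.} Fix $i$ and pick any factorization of $f+n_i$. It gives a vector $\bm_i$ with $\sigma_i(\bm_i)=-1$ and the remaining entries in $\N$. Suppose some entry satisfies $\sigma_k(\bm_i)\ge \alpha_k$ with $k\ne i$. Replace $\bm_i$ by $\bm_i-\bv_k$: this lowers the $k$-th entry by $\alpha_k$ and raises every other entry by the corresponding $b_j\ge 0$. Any increase of the $i$-th entry would make it non-negative, and then $f=\Phi_H(\bm_i)\in H$, which is impossible. So every $\bv_k$ used in this procedure has $\sigma_i(\bv_k)=0$, and the $i$-th entry stays equal to $-1$.

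To see that the procedure terminates, suppose it runs forever. Its $(i,\cdot)$-part gives factorizations of the fixed element $f+n_i$, of which there are only finitely many. Hence some vector repeats, and the moves between the two occurrences give $\sum\lambda_k\bv_k=\mathbf 0$ with $\lambda_k\in\N$ not all zero. Moreover $\lambda_i=0$, because $\bv_i$ is never subtracted, as $\sigma_i(\bm_i)=-1<\alpha_i$. This contradicts Corollary \ref{new3.2}. Therefore the process stops at a row with all off-diagonal entries $<\alpha_j$.

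\emph{Uniqueness.} Let $\bm_i$ and $\bm_i'$ be two such rows for the same $i$, and set $\bw=\bm_i-\bm_i'\in\ker(\Phi_H)$. Their $i$-th entries cancel. Every other entry of $\bw^{\pm}$ is bounded by the corresponding entry of $\bm_i$ or $\bm'_i$, hence is $<\alpha_k$. Both $\bw^+$ and $\bw^-$ therefore have all entries $<\alpha_k$ (with $i$-th entry $0$). If $\bw\ne\mathbf 0$, this contradicts Corollary \ref{ideal membership}. So $\bw=\mathbf 0$, and the row is unique.

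The main obstacle I expect is the termination step of the existence argument. The point to get right is that $\sigma_i(\bv_k)$ can never be positive during the reduction. Without that, $\bv_i$ might enter the cycle and Corollary \ref{new3.2} would not apply. This is exactly where the hypothesis $f\notin H$ is used.
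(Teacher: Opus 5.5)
Your proposal is correct and follows the paper's proof step by step. For existence, you repeatedly subtract a critical vector $\bv_k$ from an offending row, use $f\notin H$ to force $\sigma_i(\bv_k)=0$, and get termination from Corollary~\ref{new3.2}; for uniqueness, you apply Corollary~\ref{ideal membership} to the difference of two bounded rows. Working row by row, rather than modifying the whole matrix as the paper does, makes the termination argument slightly cleaner but is the same idea.
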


\begin{proof} 
Start with an arbitrary RF-matrix  $\RF(f) =(m_{ij})$, and suppose that $m_{ij}\ge \al_j$ for some $i,j$.

Let ${\bf m}_i = (m_{i1}, m_{i2}, \ldots , m_{ie})$ be the $i$-th row of $\RF(f)$,
and recall that 
\begin{equation}\label{EqRiRF}
    \sum_{k\ne i}m_{ik}n_k-n_i =f.
\end{equation}
Recall, from 
Notation \ref{NotationCritical},
 the vector 
$$\bv_j = \bv_j^+-\bv_j^-  = \al_j{\bf e}_j-\sum_{k=1, k\ne j}^e b_k{\bf e}_k.$$
We must have that  $b_i=0$, otherwise adding the equation 
\[  \sum_{k=1, k\ne j}^e b_k n_k-\al_j n_j =0\]   
to equation \eqref{EqRiRF} would express  $f$ as a non-negative combination of $n_1, \ldots, n_e$, contradicting the fact that $f \notin H$.
It follows that the   $\bm'_i=\bm_i -{\bf v}_j = (m'_{i1}, \ldots , m'_{ie})$ satisfies the conditions for being the $i$-th row of an RF-matrix for $f$: we have $m'_{ii}=-1$, $m'_{ik}\geq 0$ for $k \ne i$, and $\sum_{k=1}^e m'_{ik}n_k = f$ (since $\bv_j \in \ker(\Phi_H)$).
Thus, we may define another RF-matrix $(m'_{ij})$ for $f$, 
obtained from $(m_{ij})$
replacing the $i$-th row  $\bm_i$ with 
$\bm'_i$.

If $m'_{ij} < \al_j$ for every $i,j$, the result follows, otherwise, we repeat this step.
We claim that this process must terminate in a finite number of steps.
Assume by contradiction that this is not the case.
There are only finitely many choices for the $i$-th row of $\RF(f)$
(since they correspond to factorizations of $f+n_i$),
thus, we must come back to 
the same row vector after a finite sequence of steps. 
This implies an equation of the form
\[ \bv_{j_1}+ \bv_{j_2}+ \cdots + \bv_{j_r} = {\bf 0},\]
where the indices $j_1,\ldots,j_r$ are all different from $i$, contradicting Corollary \ref{new3.2}.
This completes the proof of existence.

Finally, assume that there are two different RF-matrices $(m_{ij}), (m'_{ij})$ for $f$ satisfying $m_{ij}, m'_{ij} < \alpha_j$ for every $i,j$. 
Let $\bm_i$ and $\bm'_i$ denote the respective $i$-th rows, and choose $i$ so   that $\bm_i\ne \bm'_i$. 
It follows from the properties of RF-matrices that  $\bw =\bm_i-\bm'_i$ satisfies $\bw \in \ker(\Phi_H)$ and  $-\alpha_j < \sigma_j(\bw) < \alpha_j$ for every $j=1,\ldots,e$, contradicting Corollary \ref{ideal membership}.
\end{proof}

\begin{lem}\label{betas}
Assume that $I_H$ is critical and  $H$ is not a gluing. 
For each $i = 1, \ldots, e$ there exists a critical binomial $ F(\bv_i) = x_i^{\alpha_i}-\bx^{\bv_i^-}$ such that $\sigma_j(\bv_i^-) < \alpha_j$ for all $j \neq i$.
\end{lem}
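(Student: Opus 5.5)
The plan is to mimic the reduction procedure in the proof of Lemma \ref{mij<alj}, with the critical binomials themselves playing the role of the rows. Fix $i$ and start from an arbitrary critical binomial $F(\bv_i)$ with respect to $x_i$, as in Notation \ref{NotationCritical}. Suppose that $\sigma_j(\bv_i^-)\ge \alpha_j$ for some $j\ne i$. We fix a critical binomial $F(\bv_j)=x_j^{\alpha_j}-\bx^{\bv_j^-}$ and consider $\bv_i' = \bv_i + \bv_j$. This replaces $x_j^{\alpha_j}$ inside $\bx^{\bv_i^-}$ by $\bx^{\bv_j^-}$. The vector $\bv_i'$ is still in $\ker(\Phi_H)$, and its negative part $\bv_i^- - \alpha_j\be_j + \bv_j^-$ is in $\N^e$.

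First we check that the $i$-th coordinate of this new negative part is $0$. If $\sigma_i(\bv_j^-)>0$, we would get $\alpha_i n_i = (\text{new factorization})$ in which $x_i$ appears on the right. Cancelling, $(\alpha_i - c)n_i$ would lie in $\langle n_k : k\ne i\rangle$ for some $c\ge 1$. If $c<\alpha_i$, this contradicts the minimality of $\alpha_i$. If $c\ge \alpha_i$, we get a nonnegative combination of the $n_k$, $k\neq i$, summing to a nonpositive number, so the combination is zero. In vector terms, $\bv_i+\bv_j$ then has the form $-(c-\alpha_i)\be_i - (\text{nonnegative})$, and it is $\mathbf 0$ since it lies in $\ker\Phi_H$ with all $n_k>0$. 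That is a relation $\bv_i+\bv_j=\mathbf 0$ with the other coefficients zero, which contradicts Corollary \ref{new3.2} (here $e\ge 3$, and we need $H$ not a gluing). Hence the new binomial is again a critical binomial $x_i^{\alpha_i}-\bx^{(\bv_i')^-}$ with respect to $x_i$.

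Next we iterate: while some $j\ne i$ has $\sigma_j\ge\alpha_j$, we add the corresponding $\bv_j$. The negative parts are factorizations of the fixed element $\alpha_i n_i$, and there are only finitely many of these. So if the process never stops, some factorization repeats, giving $\bv_{j_1}+\cdots+\bv_{j_r}=\mathbf 0$ with all $j_s\ne i$. Since the coefficient of $\bv_i$ is zero while the other coefficients are not all zero, this contradicts Corollary \ref{new3.2}. The process therefore terminates, and at that point $\sigma_j(\bv_i^-)<\alpha_j$ for all $j\ne i$, as required.

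The main obstacle is the bookkeeping in the first check, namely that the new negative part has zero $i$-th coordinate, so that each step stays within critical binomials with respect to $x_i$. The argument above uses only the minimality of $\alpha_i$ together with Corollary \ref{new3.2}, which should suffice.
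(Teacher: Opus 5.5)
Your proposal matches the paper's proof. You repeatedly modify the factorization $\bv_i^-$ of $\alpha_i n_i$ by subtracting $\bv_j$ whenever $\sigma_j\ge\alpha_j$, keep the $i$-th coordinate at zero, and get termination from Corollary \ref{new3.2} because $\bv_i$ never occurs in the resulting zero-sum. Your explicit handling of the case where the $i$-th coordinate would reach $\alpha_i$ (forcing $\bv_i+\bv_j=\mathbf 0$) covers a point the paper passes over by citing only the minimality of $\alpha_i$. At later iterations that check applies Corollary \ref{new3.2} to the updated system $\{\bv_k\}_{k\neq i}\cup\{\bv_i'\}$. You should add one line justifying this: $F(\bv_i)-F(\bv_i')$ lies in the ideal generated by the $F(\bv_k)$ with $k\neq i$, so the updated binomials still generate $I_H$.
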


\begin{proof}	
Fix an index $i$ and start with an arbitrary critical binomial 
$ F(\bv_i) = x_i^{\alpha_i}-\bx^{\bv_i^-}$ for $x_i$.
Suppose that $\sigma_{j_1}(\bv_i^-)  \ge \alpha_{j_1}$ for some  $j_1 \neq i$. 
Let $h = \alpha_i n_i = \sum_{k=1}^e \sigma_k(\bv_i^-)n_k$. 

Let ${\bf z}_0 = \bv_i^-$. 
Since $\sigma_{j_1}(\bv_i^-)  \ge \alpha_{j_1}$, the vector ${\bf z}_1= {\bf z}_0-\bv_{j_1} \in \N^e$ gives  another factorization of $h$, that is,
 $\alpha_in_i = \sum \sigma_k({\bf z}_1) n_k$.
 This implies  that 
$$
(\alpha_i-\sigma_i({\bf z}_1))n_i \in \langle n_1,\ldots,n_{i-1},n_{i+1},\ldots,n_e \rangle,$$
and, by minimality of $\alpha_i$, it forces $\sigma_i({\bf z}_1)=0.
$
	
If $\sigma_j({\bf z}_1) < \alpha_j$ for all $j \neq i$, the result follows, otherwise, we repeat this step.
We claim that this process must terminate in a finite number of steps.
Assume by contradiction that this is not the case.
At  each step $s > 1$ we obtain a factorization of $h$ associated to a vector ${\bf z}_s = {\bf z}_{s-1}-{\bf v}_{j_s}$ for some index $j_s \neq i$. 
Since there are finitely many different factorizations for $h$, 
there must exist  $p\ne q$ such that ${\bf z}_p={\bf z}_q$, that is, 
$${\bf 0} = {\bf z}_p - {\bf z}_q = \bv_{j_{p+1}}+\bv_{j_{p+2}}+\cdots+\bv_{j_s},$$
	and the index $i$ does not appear among the indices $j_{p+1},\ldots,j_s$ contradicting Corollary \ref{new3.2}.
\end{proof}

\begin{rem}
In fact, we will show in the proof of Theorem \ref{forwardmainTh}
that under the hypotheses of Lemma \ref{betas} there is a unique choice of $\bv_i$.
\end{rem}

\section{Proof of the forward direction of the main theorem}

As in the previous sections, 
let $H$ 
be a numerical semigroup minimally generated by $\{n_1, \ldots, n_e\}$.

We will need the following result, which is a reformulation of \cite[Theorem 3.3]{Et1}.

\begin{lem}[{\cite[Theorem 3.3]{Et1}\label{Eto}}]
Suppose that $I_H$ is minimally generated by binomials $F_1, \ldots, F_e$ where $F_1=x_1^{\alpha_1}-x_2^{\alpha_2}$. 
Let $d = \gcd(n_1,n_2)$, and let $H'=\langle d,n_3,\ldots,n_e \rangle$.  
Define a surjective ring homomorphism 
$\varphi:\Bbbk[x_1,\ldots,x_e] \rightarrow \Bbbk[y_1,\ldots,y_{e-1}]$ by
$$
\varphi(x_1)=y_1^{\alpha_2},\quad \varphi(x_2)=y_1^{\alpha_1},\quad \varphi(x_i)=y_{i-1}\,\text{ for }i=3,\ldots,e.
$$
Then, $H'$ has embedding dimension $e-1$ and $I_{H'}=\varphi(I_H)=(\varphi(F_2),\ldots,\varphi(F_e))$. 
\end{lem}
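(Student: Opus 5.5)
The argument has four steps: identify $H'$ as the image of $H$ under collapsing the pair $n_1,n_2$ to their gcd, show $\varphi$ maps $I_H$ into $I_{H'}$, show the images $\varphi(F_i)$ generate $I_{H'}$ via a lattice comparison, and read off the embedding dimension.

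\textbf{Setup.} Since $F_1=x_1^{\alpha_1}-x_2^{\alpha_2}\in I_H$ is critical for both variables, we have $\alpha_1n_1=\alpha_2n_2$. We first show $\alpha_1=n_2/d$ and $\alpha_2=n_1/d$. Write $n_1=dn_1'$ and $n_2=dn_2'$ with $\gcd(n_1',n_2')=1$. From $\alpha_1n_1'=\alpha_2n_2'$ we get $n_2'\mid\alpha_1$. The multiple $n_2'n_1=n_1'n_2$ lies in $\langle n_2\rangle$, so minimality of $\alpha_1$ gives $\alpha_1\le n_2'$. Hence $\alpha_1=n_2'$ and $\alpha_2=n_1'$. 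Grade $\Bbbk[y_1,\ldots,y_{e-1}]$ by $\deg y_1=d$ and $\deg y_{i-1}=n_i$ for $i\ge3$. Then $\deg\varphi(x_1)=\alpha_2d=n_1$ and $\deg\varphi(x_2)=\alpha_1 d=n_2$, so $\Psi_{H'}\circ\varphi=\Psi_H$. Consequently $\varphi(I_H)\subseteq I_{H'}$, and $\varphi(F_1)=y_1^{\alpha_1\alpha_2}-y_1^{\alpha_1\alpha_2}=0$.

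\textbf{Generation.} We must show that $I_{H'}\subseteq(\varphi(F_2),\ldots,\varphi(F_e))$. I would argue via lattices and Markov-type moves.
\begin{itemize}
\item The map $\varphi$ corresponds to the linear map $\pi:\Z^e\to\Z^{e-1}$ given by $\pi(\be_1)=\alpha_2\be_1$, $\pi(\be_2)=\alpha_1\be_1$, and $\pi(\be_i)=\be_{i-1}$ for $i\ge3$.
\item It satisfies $\Phi_{H'}\circ\pi=\Phi_H$, and its image consists of the vectors whose first coordinate lies in $\gcd(\alpha_1,\alpha_2)\Z=\Z$. So $\pi$ is surjective, and $\pi(\ker\Phi_H)=\ker\Phi_{H'}$ with kernel spanned by $\bv_1$.
\item The key point is connectivity of fibers. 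Given two factorizations $\bw,\bw'$ of $h\in H'$, write $\sigma_1(\bw)=p\alpha_2+q\alpha_1$ with $p,q\ge0$. This is possible whenever $\sigma_1(\bw)$ is large, but in general it fails.
\end{itemize}
Instead I would pass to the monomial $y_1^{N}$ times the factorization. Since $I_{H'}$ is homogeneous and prime, and $y_1$ is a nonzerodivisor modulo $I_{H'}$, it is enough to show that $y_1^{N}\cdot g\in\varphi(I_H)S'$ for binomials $g\in I_{H'}$, and then saturate.

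\textbf{Saturation.} Here $\varphi(I_H)S'=(\varphi(F_2),\ldots,\varphi(F_e))$. Alternatively one can lift directly. A factorization of $h\in H'$ with $\sigma_1=c$, multiplied by $y_1^{\alpha_1\alpha_2 k}$, lifts to a factorization in $H$ for large $k$, because every integer $\ge(\alpha_1-1)(\alpha_2-1)$ is in $\langle\alpha_1,\alpha_2\rangle$. Connectivity of fibers of $H$ via $F_1,\ldots,F_e$ then gives $y_1^{M}g\in(\varphi(F_2),\ldots,\varphi(F_e))$. The hard step is removing $y_1^M$: I would show that $J=(\varphi(F_2),\ldots,\varphi(F_e))$ is saturated with respect to $y_1$.

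I expect this to be the main obstacle. My first attempt would use the hypothesis that $F_2,\ldots,F_e$ together with $F_1$ minimally generate $I_H$, combined with the fact that each $F_i$ ($i\ge2$) avoids pure powers of $x_1,x_2$ mixing in a way that produces extra $y_1$-torsion. If that does not work, the fallback is to follow Eto's original argument, \cite[Theorem 3.3]{Et1}, of which this statement is a reformulation; concretely, I would translate that result into the present notation and check the two notations coincide.

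\textbf{Embedding dimension.} Finally, $H'$ has embedding dimension $e-1$. If $d$ or some $n_i$ were redundant in $H'$, the corresponding $\varphi(F_j)$ would contain a linear term. One can rule this out because $d\notin\langle n_3,\ldots,n_e\rangle$: otherwise lifting that relation would make $F_1$ non-minimal or would give a smaller $\alpha$. Also $n_i\notin\langle d,\ldots\rangle$ by the minimality of the original generators after lifting.
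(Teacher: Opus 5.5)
The paper does not prove this lemma; it cites \cite[Theorem 3.3]{Et1}. So your proposal has to stand on its own, and it has a genuine gap: it stops exactly where the content of the lemma begins.

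What you establish is correct but formal. You get $\alpha_1=n_2/d$, $\alpha_2=n_1/d$ and $\Psi_{H'}\circ\varphi=\Psi_H$, hence $J:=(\varphi(F_2),\ldots,\varphi(F_e))\subseteq I_{H'}$. Lifting $y_1^{N}g$ to $H$, once the $y_1$-exponents lie in $\langle\alpha_1,\alpha_2\rangle$, then gives $I_{H'}=J:y_1^\infty$. Your remark that $y_1$ is a nonzerodivisor modulo $I_{H'}$ does not help; what you need is that $y_1$ is a nonzerodivisor modulo $J$. The step $J:y_1^\infty=J$ is the whole theorem, and you leave it open. Moreover, nothing you prove uses the hypothesis that $I_H$ has exactly $e$ minimal generators. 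Every step works verbatim for any generating set of $I_H$ containing $x_1^{\alpha_1}-x_2^{\alpha_2}$.

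That hypothesis is essential, and precisely at the saturation step. Take $(n_1,n_2,n_3,n_4)=(10,15,7,12)$. Then $d=5$, $\alpha_1=3$, $\alpha_2=2$, and $x_1^3-x_2^2$ is a minimal (critical) generator. Since $22$ has only the two factorizations $n_1+n_4=n_2+n_3$, the binomial $x_1x_4-x_2x_3$ is also a minimal generator; here $\mu(I_H)\ge 5$, so there is no conflict with the lemma. Its image is $\varphi(x_1x_4-x_2x_3)=y_1^2(y_3-y_1y_2)$. Let $J$ be generated by the images of the remaining minimal generators; they all have degree greater than $12$. Hence $y_3-y_1y_2\in (J:y_1^\infty)\setminus J$, and $H'=\langle 5,7,12\rangle$ has embedding dimension $2$.

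The same example shows that your embedding-dimension argument ``by minimality after lifting'' is invalid as stated. The relation $12=5+7$ in $H'$ does not lift to $H$, since $1\notin\langle 2,3\rangle$. Your linear-term argument works only once $I_{H'}=J$ is already known.

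To pin down the missing ingredient, use three facts:
\begin{enumerate}
\item $S'/(J+(y_1))\cong S/(I_H+(x_1,x_2))$;
\item $I_{H'}\cap(y_1)=y_1I_{H'}$;
\item $\dim_\Bbbk S'/(I_{H'}+(y_1))=|\Ap(H',d)|=d$.
\end{enumerate}
Graded Nakayama then shows that $J=I_{H'}$ if and only if $\dim_\Bbbk\Bbbk[H]/(t^{n_1},t^{n_2})=|\Ap(H,n_1)\cap\Ap(H,n_2)|=d$. A priori this dimension is only $\ge d$. The condition is equivalent to $\Bbbk[H]$ being a free $\Bbbk[t^{n_1},t^{n_2}]$-module, that is, to the Ap\'ery decomposition of Lemma~\ref{EqAperySets}. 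This is what must be derived from $\mu(I_H)=e$, and your proposal contains no idea for doing so.
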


\begin{lem}\label{EqAperySets}
	Under the assumptions of Lemma \ref{Eto}, we have $$\Ap(H, n_1)=\{\ell n_2 +\omega \,: \, \ell = 0, \ldots, \alpha_2-1, \omega \in \Ap(H', d)\}.$$
\end{lem}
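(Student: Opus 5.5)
The plan is to prove the inclusion $\Ap(H,n_1)\subseteq R$, where $R$ denotes the right-hand side. We then show that $R$ consists of exactly $n_1$ distinct integers. Since $|\Ap(H,n_1)|=n_1$, equality follows.

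First we record the degrees forced by Lemma~\ref{Eto}. Grade $\Bbbk[y_1,\ldots,y_{e-1}]$ by $\deg y_1=d$ and $\deg y_{i-1}=n_i$ for $i\ge 3$, which is the grading making $I_{H'}$ homogeneous. Since $\varphi(F_1)=0$ and $\varphi$ should preserve degrees, we expect $n_1=\alpha_2 d$ and $n_2=\alpha_1 d$. This also follows directly: $\alpha_1n_1=\alpha_2n_2$ is the minimal relation between $n_1$ and $n_2$, so $\alpha_1=n_2/d$ and $\alpha_2=n_1/d$.

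\emph{Distinctness and cardinality of $R$.} Each $\ell n_2=\ell\alpha_1 d$ is a multiple of $d$, so $\ell n_2+\omega\equiv\omega \pmod d$. The elements of $\Ap(H',d)$ are pairwise incongruent modulo $d$, so $\omega$ is determined by the element, and then so is $\ell$. Hence $|R|=\alpha_2\cdot|\Ap(H',d)|=\alpha_2 d=n_1$.

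\emph{The inclusion $\Ap(H,n_1)\subseteq R$.} Let $h\in\Ap(H,n_1)$. No factorization of $h$ uses $n_1$. Among its factorizations, choose one, $h=\ell n_2+\sum_{i\ge3}c_in_i$. If $\ell\ge\alpha_2$, we could trade $\alpha_2n_2$ for $\alpha_1n_1$ and obtain $h-n_1\in H$, a contradiction; so $\ell<\alpha_2$. Set $\omega=\sum_{i\ge3}c_in_i\in H'$. It remains to show $\omega-d\notin H'$.

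This is the main obstacle, because membership in $H'$ of multiples of $d$ does not translate naively into membership in $H$. For this step we would exploit $I_{H'}=\varphi(I_H)$ as a base change. It says that $\Bbbk[H']\cong \Bbbk[H]\otimes_{A}\Bbbk[y_1]$, where $A=\Bbbk[x_1,x_2]/(x_1^{\alpha_1}-x_2^{\alpha_2})$ maps to $\Bbbk[y_1]$ by $x_1\mapsto y_1^{\alpha_2}$ and $x_2\mapsto y_1^{\alpha_1}$. Reducing modulo $x_1$ (that is, modulo $t^{n_1}$, respectively $y_1^{\alpha_2}$), we get
\[
\Bbbk[H']/(y_1^{\alpha_2})\cong \Bbbk[H]/(t^{n_1})\otimes_{\Bbbk[x_2]/(x_2^{\alpha_2})}\Bbbk[y_1]/(y_1^{\alpha_2}).
\]
Here $\Bbbk[H]/(t^{n_1})$ has monomial basis $\{t^h : h\in\Ap(H,n_1)\}$, and $\Bbbk[H']/(y_1^{\alpha_2})$ is free over $\Bbbk[y_1]/(y_1^{\alpha_2})$ with basis $\{t^{\omega}:\omega\in\Ap(H',d)\}$, since $\Bbbk[H']$ is free over $\Bbbk[t^d]$ with that basis.

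Comparing the two descriptions through the monomial $t^\omega$, we would show the following. If $\omega-d\in H'$, then $t^\omega$ lies in $y_1\Bbbk[H']$, and pulling back the relation would force $t^{h}=x_2^{\ell}t^{\omega}$ to vanish in $\Bbbk[H]/(t^{n_1})$, i.e. $h-n_1\in H$, a contradiction. If this tensor argument turns out to be delicate, the fallback is a direct lattice argument on factorizations. In that argument, a factorization witnessing $\omega-d\in H'$ gives a binomial in $I_{H'}$ divisible by $y_1$ on one side. Writing it as $\varphi$ of a combination of $F_2,\ldots,F_e$ and tracking the $y_1$-exponents modulo $\alpha_1$ and $\alpha_2$ would produce a factorization of $h$ involving $x_1$.
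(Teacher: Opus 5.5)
Your count $|R|=n_1$ via residues modulo $d$ is fine, and so is the bound $\ell<\alpha_2$ obtained by trading $\alpha_2n_2$ for $\alpha_1n_1$. The reduction that follows, however, is false.

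\textbf{The reduction fails for an arbitrary factorization.} If $h\in\Ap(H,n_1)$ and $h=\ell n_2+\omega$ is an \emph{arbitrary} factorization, $\omega$ need not lie in $\Ap(H',d)$. Take the semigroup of Example~\ref{acinotcrit} with generators labelled $n_1=91$, $n_2=26$, $n_3=46$, $n_4=67$. Then $F_1=x_1^2-x_2^7$, so $\alpha_1=2$, $\alpha_2=7$, $d=13$ and $H'=\langle 13,46,67\rangle$.
\begin{itemize}
\item The element $h=268=4\cdot 67=5\cdot 26+3\cdot 46$ lies in $\Ap(H,91)$, because $177\notin H$.
\item The factorization $4\cdot 67$ gives $\ell=0$ and $\omega=268$, yet $268-13=255=9\cdot 13+3\cdot 46\in H'$.
\item Only the other factorization, with $\omega=138$, gives an element of $\Ap(H',13)$.
\end{itemize}
So you would at least have to single out a specific factorization (for instance one with $\ell$ maximal) and then use that choice. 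Your argument does neither.

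\textbf{The tensor step cannot work.} Your displayed base-change isomorphism is correct, but the same example refutes the pull-back step directly. Here $t^{268}\in t^{13}\Bbbk[H']$, yet $t^{268}\neq 0$ in $\Bbbk[H]/(t^{91})$. Indeed, the natural map $\Bbbk[H]/(t^{n_1})\to\Bbbk[H']/(t^{n_1})$ kills $t^{268}$, since $177=3\cdot 13+3\cdot 46\in H'$. The reason is that $\Bbbk[y_1]/(y_1^{\alpha_2})$ is not flat over $\Bbbk[x_2]/(x_2^{\alpha_2})$ via $x_2\mapsto y_1^{\alpha_1}$ when $\alpha_1\ge 2$. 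So relations in $\Bbbk[H']$ cannot be pulled back to $\Bbbk[H]$ as you describe. Your fallback only restates the missing step.

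\textbf{What is missing.} You need a way to turn $\Bbbk[H']\cong\Bbbk[H]\otimes_A\Bbbk[y_1]$ into structural information about $\Bbbk[H]$ itself. One way:
\begin{itemize}
\item $\Bbbk[H]$ is a torsion-free module over the domain $A$, of rank $[\Bbbk(t):\Bbbk(t^d)]=d$.
\item $\mu_A(\Bbbk[H])=\dim_{\Bbbk}\Bbbk[H]\otimes_A\Bbbk=\dim_{\Bbbk}\Bbbk[H']/(t^d)=d$.
\item Hence $\Bbbk[H]$ is $A$-free on $d$ monomials $t^{\omega_1},\ldots,t^{\omega_d}$.
\item Base change then gives $\{\omega_i\}=\Ap(H',d)$, and reducing modulo $x_1$ gives the lemma.
\end{itemize}

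\textbf{The paper's route.} The paper proves the opposite inclusion, $R\subseteq\Ap(H,n_1)$ for your set $R$, which fits the available hypothesis better. It starts from $\omega\in\Ap(H',d)$ and assumes $\ell n_2+\omega-n_1\in H$. This is turned into a binomial of $I_{H'}$, and $I_{H'}=\varphi(I_H)$ is used to return to $H$. Finally $\gcd(\alpha_1,\alpha_2)=1$ forces $\ell\ge\alpha_2$, a contradiction.
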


\begin{proof}
From $\alpha_1n_1=\alpha_2n_2$ and the minimality of $\alpha_1,\alpha_2$, we deduce that $\gcd(\alpha_1,\alpha_2)=1$, $n_1=\alpha_2d$ and $n_2=\alpha_1d$. Consider the set
$$	A=\{\ell n_2 +\omega \,: \, \ell = 0, \ldots, \alpha_2-1, \omega \in \Ap(H', d)\}.$$
We prove that $A=\Ap(H,n_1)$.  
If $\omega + \ell n_2 = \omega'+\ell' n_2$ for some $\omega,\omega' \in \Ap(H',d)$,  $0 \le \ell <\ell' \le \alpha_2-1$, then $\omega = \omega'+(\ell'-\ell)n_2$, contradicting $\omega \in \Ap(H',d)$. 
Thus, $|A|=\alpha_2d = n_1$.

Next, let $\omega + \ell n_2 \in A$. 
Write $\omega=\omega_3n_3+\cdots+\omega_en_e$ with $\omega_i \in \N$.
Assume by contradiction that $\omega + \ell n_2 \notin \Ap(H,n_1)$, then there exist $\lambda_1,\ldots,\lambda_e \in \N$, with $\lambda_1 > 0$ such that 
$$\omega+\ell n_2 = \lambda_1n_1+\cdots+\lambda_en_e.$$
This equation induces the equation in $H'$ 
$$\omega  +(\ell \alpha_1-\lambda_1\alpha_2-\lambda_2\alpha_1)d = \lambda_3n_3+\cdots+\lambda_en_e.$$
Indeed,
since $\omega \in \Ap(H',d)$, we must have $\ell \alpha_1 -\lambda_1 \alpha_2 - \lambda_2 \alpha_1 \ge 0$. 
Then, letting ${\bf z}=(0,\omega_3,\ldots,\omega_e)$ and ${\bf z}'=(0,\lambda_3,\ldots,\lambda_e)$, we obtain a binomial 
$$
F={\bf y}^{\bf z}y_1^{\ell \alpha_1-\lambda_1\alpha_2-\lambda_2\alpha_1}-{\bf y}^{\bf z'} \in I_{H'}.
$$
By Lemma \ref{Eto}  there exists a binomial $G \in I_H$ such that $\varphi(G)=F$. 
In particular,  that there exist $\mu_1,\mu_2 \in \N$ such that $\varphi(x_1^{\mu_1}x_2^{\mu_2})=y_1^{\ell \alpha_1-\lambda_1\alpha_2-\lambda_2\alpha_1}$,
thus,
$$\mu_1n_1+\mu_2n_2 = (\ell \alpha_1-\lambda_1\alpha_2-\lambda_2\alpha_1)d.$$
Dividing by $d$ we obtain  $\mu_1\alpha_2 + \mu_2 \alpha_1 = \ell \alpha_1-\lambda_1\alpha_2-\lambda_2\alpha_1,$
that is,
$$(\mu_1+\lambda_1)\alpha_2 = (\ell-\lambda_2-\mu_2)\alpha_1.$$
Since $\lambda_1 > 0$, then $(\mu_1+\lambda_1)\alpha_2 > 0$. 
However, since $\gcd(\alpha_1,\alpha_2)=1$, it follows that the smallest positive solution of the previous equality is obtained when $\mu_1+\lambda_1=\alpha_1$ and $\ell-\lambda_2-\mu_2 = \alpha_2$: then $\ell \ge \alpha_2$, contradicting the definition of $A$. 

We have shown that  $\omega + \ell n_2 \in \Ap(H,n_1)$, hence $A \subseteq \Ap(H,n_1)$. 
Since $|A|=|\Ap(H,n_1)|$, we conclude  that $A = \Ap(H,n_1)$.
\end{proof}

The following theorem is the first main result of this section.

\begin{thm}\label{ThmCriticalIffAci}
Let $H$ be an almost symmetric numerical semigroup that is not symmetric.
Then,  $I_H$ is critical  if and only if $H$ is an almost  complete intersection.
If these equivalent conditions hold, then $\alpha_i n_i \ne \alpha_j n_j$ for all $i\ne j$, and $I_H$ is generated minimally by any choice of $e$ critical binomials, one for each $x_1, \ldots, x_e$.
\end{thm}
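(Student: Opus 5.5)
Throughout, the hypothesis ``almost symmetric but not symmetric'' enters only through Remark \ref{RemNotGluing}: such an $H$ is not a gluing. It is also not a complete intersection, since complete intersections are symmetric.

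\emph{Critical implies almost complete intersection.} If $I_H$ is critical, then $\mu(I_H)\le e$. Since $H$ is not a complete intersection, $\mu(I_H)=e$.

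\emph{Almost complete intersection implies critical.} Suppose $\mu(I_H)=e$ and fix a minimal binomial generating set $G$. By Remark \ref{RemBasicObservationsCritical}, $G$ contains a critical binomial with respect to each $x_i$. If these $e$ critical binomials are pairwise distinct, they exhaust $G$ and $I_H$ is critical.

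So assume two of them coincide, i.e.\ one binomial is critical for both $x_i$ and $x_j$. By Remark \ref{RemBasicObservationsCritical}, this happens exactly when $\alpha_in_i=\alpha_jn_j$, and the binomial is $x_i^{\alpha_i}-x_j^{\alpha_j}$. I aim for a contradiction. After relabeling, Lemma \ref{Eto} applies provided $G$ consists of $e$ binomials $F_1=x_1^{\alpha_1}-x_2^{\alpha_2},F_2,\dots,F_e$. It gives $H'=\langle d,n_3,\dots,n_e\rangle$ of embedding dimension $e-1$ with $I_{H'}$ generated by $e-1$ elements. Hence $H'$ is a complete intersection, so symmetric.

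Lemma \ref{EqAperySets} describes $\Ap(H,n_1)$ as $\{\ell n_2+\omega\}$ with $0\le\ell<\alpha_2$ and $\omega\in\Ap(H',d)$. Its maximal elements for $\le_H$ are then $(\alpha_2-1)n_2+\omega$ with $\omega$ maximal in $\Ap(H',d)$ for $\le_{H'}$. I need to check this transfer of orders carefully, since $\le_H$ and $\le_{H'}$ differ. Because $H'$ is symmetric, $\Ap(H',d)$ has a unique maximal element, so $\Ap(H,n_1)$ has a unique maximal element. Thus $t(H)=1$ and $H$ is symmetric, a contradiction. This excludes coincidences and also proves the claim $\alpha_in_i\neq\alpha_jn_j$ for $i\ne j$. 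I expect the Apéry-set and order transfer to be the main obstacle; an alternative is to observe that the construction exhibits $H$ as the gluing $\alpha_1 H'' + \ldots$, contradicting non-gluing directly.

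\emph{Any choice of critical binomials generates minimally.} By the above, every choice of $e$ critical binomials, one per variable, consists of $e$ distinct binomials. Each of them is minimal (Remark \ref{RemBasicObservationsCritical}), but a set of minimal binomials need not generate $I_H$, so I use the degree structure of the minimal generators instead. The $e$ elements of $G$ have degrees $\alpha_1n_1,\dots,\alpha_en_e$, which are pairwise distinct. Since the degrees of minimal generators are invariants, each graded component $(I_H/\mm I_H)_{\alpha_in_i}$ is one-dimensional. Any critical binomial for $x_i$ is minimal and of degree $\alpha_in_i$, so its image spans this component. Therefore any choice of $e$ critical binomials, one for each $x_i$, gives a basis of $I_H/\mm I_H$. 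By the graded Nakayama lemma, it minimally generates $I_H$.
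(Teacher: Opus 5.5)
\paragraph{The gap.} Your argument breaks at the step ``$I_{H'}$ is generated by $e-1$ elements, hence $H'$ is a complete intersection''. By Lemma~\ref{Eto}, $H'$ has embedding dimension $e-1$, so $I_{H'}$ has height $e-2$, and a complete intersection would need only $e-2$ generators. With $e-1$ generators, $H'$ is in general an almost complete intersection. By Kunz's theorem such a semigroup is \emph{never} symmetric, so nothing forces $\Ap(H',d)$ to have a unique maximal element.

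In fact your contradiction uses only that $H$ is not symmetric. If it worked, it would show that no almost complete intersection can have $\alpha_in_i=\alpha_jn_j$, and Example~\ref{acinotcrit} refutes that. There $H=\langle 26,46,67,91\rangle$ is an almost complete intersection with $\alpha_1n_1=\alpha_4n_4=182$. Then $d=13$ and $H'=\langle 13,46,67\rangle$. Its toric ideal is minimally generated by $y_1^{19}-y_2y_3^3$, $y_2^4-y_1^9y_3$ and $y_3^4-y_1^{10}y_2^3$, so $H'$ is not a complete intersection and not symmetric. One also checks directly that this $H$ is not a gluing, so your fallback idea fails in the same way. Almost symmetry has to be used genuinely at this step, not only through the non-gluing property.

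\paragraph{How the paper closes it.} Your transfer of maximal elements is correct. It yields an order-preserving bijection $\eta\colon \PF(H')\to\PF(H)$, $\eta(f')=f'+c$, where $c=(\alpha_2-1)n_2+d-n_1=(\alpha_1-1)(\alpha_2-1)d$. In particular $\eta(\Fr(H'))=\Fr(H)$.

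Since $H$ is almost symmetric but not symmetric, $\Fr(H)=f_1+f_2$ with $f_1,f_2\in\PF(H)$. Writing $f_k=\eta(f'_k)$ gives $\Fr(H')=f'_2+(f'_1+c)=f'_2+f_1$. Because $c$ is a positive multiple of $d$, we get $f_1=f'_1+c\in H'$, and $f_1\neq 0$ since $f_1\notin H$. Then $f'_2\in\PF(H')$ forces $\Fr(H')\in H'$, a contradiction.

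Once $\alpha_in_i\neq\alpha_jn_j$ is established this way, the rest of your proposal agrees with the paper. That covers the forward direction and the graded Nakayama argument for an arbitrary choice of critical binomials.
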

\begin{proof}	
We have already  observed, after Definition~\ref{DefCriticalToric}, that being critical implies being a complete intersection or an  almost complete intersection.
However, complete intersection numerical semigroups are symmetric,  
so the forward direction of the first claim follows.

Assume that $I_H$ is an almost complete intersection.
We show first that  $\alpha_i n_i \ne \alpha_j n_j$ for all $i\ne j$. 
Without loss of generality, we prove this claim for $i=1, j=2$, and we assume $n_2 > n_1$.
Assume by contradiction that $\alpha_1 n_1 = \alpha_2 n_2$. 
Let $d=\gcd(n_1,n_2)$, then $\alpha_1n_1=\alpha_2n_2=\frac{n_1n_2}{d}$, and
let $H'= \langle d, n_3, \dots, n_e \rangle$.
It follows from Lemma \ref{EqAperySets} that
that 
\begin{equation}\label{EqAperyFlat}
    \Ap(H, n_1)=\{\ell n_2 +\omega \,: \, \ell = 0, \ldots, \alpha_2-1, \omega \in \Ap(H', d)\},
\end{equation}
and that the elements $\ell n_2 +\omega$  are all distinct.
It follows from \eqref{EqAperyFlat} that the maximal elements of $\Ap(H',d)$ with respect to $\leq_{H'}$ are in bijection with the maximal elements of $\Ap(H,n_1)$, the bijection being $\omega \mapsto (\alpha_2-1)n_2+\omega$.
This yields a bijection  $\eta: \PF(H')\to \PF(H)$,
given by 
\begin{equation}\label{EqBijectionPFFlat}
\eta(f')= f' + (\alpha_2-1)n_2 +d-n_1.
    \end{equation}
Obviously, $\eta$ preserves the usual order of the integers, thus, it must send the maximum of $\PF(H')$ to the maximum of $\PF(H)$,
that is, $\eta(\Fr(H'))=\Fr(H)$.

Since $H$ is almost symmetric but not symmetric,  there exist $f_1, f_2 \in \PF(H)$ such that $f_1+f_2 = \Fr(H) $.
Using \eqref{EqBijectionPFFlat}, this means that there exist $f'_1, f'_2 \in \PF(H')$
such that 
$$
f'_1+ (\alpha_2-1)n_2 +d-n_1 + f'_2+ (\alpha_2-1)n_2 +d-n_1 = \Fr(H')+(\alpha_2-1)n_2 +d-n_1,
$$
thus,
$$
\Fr(H')=f'_1 + f'_2+ (\alpha_2-1)n_2 +d-n_1 .
$$
Observe that $(\alpha_2-1)n_2 +d-n_1$ is a positive multiple of $d$, since $ \alpha_2 \geq 2$ and $n_2>n_1$, thus, $(\alpha_2-1)n_2 +d-n_1\in H'$.
From $f'_1 \in \PF(H')$ it follows that $f'_1 +  (\alpha_2-1)n_2 +d-n_1 \in H'$, and from $f'_2 \in \PF(H')$ we obtain the contradiction $\Fr(H') \in H'$.

We have proved that if $I_H$ is an almost complete intersection then 
  $\alpha_i n_i \ne \alpha_j n_j$ for all $i\ne j$. 
Now, we show that $I_H$ is critical, and that any choice of critical binomials generates $I_H$ minimally.

Let $F_1, \ldots, F_e$ be critical  binomials for $x_1, \ldots, x_e$.
Since they are minimal binomials, their images in the graded vector space $I_H/\mm I_H$
are non-zero. 
The condition $\alpha_i n_i \ne \alpha_j n_j$ then guarantees that they all have different degrees, and therefore are necessarily linearly independent.
It follows by Nakayama's lemma that $F_1, \ldots, F_e$ are part of a minimal set of generators of $I_H$.
However, since $I_H$ is an almost complete intersection, we conclude that 
$F_1, \ldots, F_e$ must generate $I_H$.
\end{proof}

We observe that the toric ideal of a complete intersection (and thus symmetric) numerical semigroup may fail to be critical, see Example \ref{symnotcrit}.

We also remark that if $I_H$ is critical but fails to satisfy 
 $\alpha_in_i \ne \alpha_jn_j$ for all $i,j$, there may be choices of critical binomials that fail to generate $I_H$,
see Example \ref{critnotanychoice}.

\begin{prop}\label{mj=alj-1} 
Assume 
that $H$  is almost symmetric but not symmetric and that $I_H$ is critical.
 Every $f \in \PF(H)\setminus\{ \Fr(H)\}$ has
 an RF-matrix   $\MM = \RF(f)= (m_{ij})$ such that, for every  $i\ne j$, 
either $m_{ij} =\al_j -1$ and  $m_{ji} =0$, or $m_{ij} =0$ and $m_{ji} =\al_i -1$. 
In particular,
 $\MM$ has exactly $(e^2 -e)/2$ positive entries.
 \end{prop}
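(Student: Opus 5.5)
The plan is to work with the distinguished matrix $\MM=\RF(f)=(m_{ij})$ from Lemma~\ref{mij<alj}, the unique $\RF$-matrix of $f$ with $m_{ij}<\al_j$ for all $i,j$, and to show directly that it has the asserted shape. Write $f'=\Fr(H)-f$; since $H$ is almost symmetric and $f\neq \Fr(H)$, we have $f'\in\PF(H)$. Let $\MM'=(m'_{ij})$ be the analogous reduced $\RF$-matrix of $f'$. The argument has three steps, which I outline in order.

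\emph{Step 1: at most one of $m_{ij}$, $m'_{ji}$ is positive.} Add row $i$ of $\MM$ to row $j$ of $\MM'$. This gives a vector expressing $\Fr(H)+n_i+n_j$ whose $j$-th entry is $m_{ij}-1$, whose $i$-th entry is $m'_{ji}-1$, and whose other entries are non-negative. If $m_{ij}\ge1$ and $m'_{ji}\ge1$, then cancelling $n_i+n_j$ gives $\Fr(H)\in H$, which is absurd. The same computation with two rows of $\MM$ shows that $m_{ij}\ge1$ and $m_{ji}\ge1$ would force $2f\in H$. I would try to rule this out as well, at least when $f'=f$, where it is immediate; in general I expect to derive it from Step~2.

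\emph{Step 2: each critical relation appears as a pair of rows of $\MM$.} This is the step that produces the value $\al_j-1$. Fix $j$ and a critical binomial $F(\bv_j)=x_j^{\al_j}-\bx^{\bv_j^-}$ chosen as in Lemma~\ref{betas}, and pick $i\in\supp(\bv_j^-)$; such an $i$ exists because $H$ is not a gluing (Remark~\ref{RemNotGluing}, Lemma~\ref{LemmaGluing}). By Theorem~\ref{ThmCriticalIffAci} this binomial is a minimal generator of $I_H$. Theorem~\ref{ThmRFRelations} then gives $\al_jn_j-n_i-n_j\notin H$, together with some $g\in\PF(H)$ and an $\RF$-matrix of $g$ whose row $j$ is $\bv_j^--\be_i-\be_j+{\bf z}$ and whose row $i$ is $(\al_j-1)\be_j-\be_i+{\bf z}$, where $\sigma_i({\bf z})=\sigma_j({\bf z})=0$. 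I would then reduce the remaining entries below the $\al_k$ as in the proof of Lemma~\ref{mij<alj}. The point to check is that subtracting critical vectors $\bv_k$ with $k\neq i,j$ never lowers column $j$ of row $i$. By uniqueness in Lemma~\ref{mij<alj}, the resulting row then equals row $i$ of the reduced matrix of $g$, so that matrix has an entry $\al_j-1$ in position $(i,j)$.

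\emph{Step 3: show that $g$ can be taken to be $f$ (or $f'$), and count.} First I would show that for every pair $i\ne j$ the pair $\{m_{ij},m_{ji}\}$ has a positive member. The tool is Corollary~\ref{ideal membership} applied to the difference of rows $i$ and $j$ of $\MM$: if $m_{ij}=m_{ji}=0$, this difference is a nonzero vector of $\ker(\Phi_H)$ with all entries strictly between $-\al_k$ and $\al_k$, which is impossible. Combined with Step~1, exactly one of $m_{ij},m_{ji}$ is positive. It remains to upgrade "positive" to "equal to $\al_j-1$". Suppose $0<m_{ij}<\al_j-1$. I would add $\be_j$ to row $i$ to obtain a factorization of $f+n_i+n_j$ with fewer than $\al_j$ copies of $n_j$, and compare it, via the duality between $\MM$ and $\MM'$ from Step~1, with the $(\al_j-1)$-row produced in Step~2, aiming at a contradiction with Corollary~\ref{ideal membership}. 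This identification of $g$ with $f$, and the exclusion of intermediate values $0<m_{ij}<\al_j-1$, is where I expect the main difficulty. Once it is done, each of the $(e^2-e)/2$ unordered pairs contributes exactly one positive entry, which gives the final count.
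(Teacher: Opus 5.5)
Your Step~1 and the first half of Step~3 are correct. The proposal does not prove the statement, though. The upgrade from ``exactly one of $m_{ij},m_{ji}$ is positive'' to ``the positive one equals $\alpha_j-1$ (resp.\ $\alpha_i-1$)'' is left open as the ``main difficulty''. Step~2, which was meant to supply the value $\alpha_j-1$, only yields an $\RF$-matrix of some $g\in\PF(H)$, and you never identify $g$ with $f$. Its reduction is also uncontrolled. Subtracting a $\bv_k$ with $b_{kj}>0$ pushes the $(i,j)$ entry to at least $\alpha_j$. After that, $\bv_j$ itself must be subtracted, and the entry drops.

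The missing idea is that you extract too little from Corollary~\ref{ideal membership}. Apply it to $\bv=\bm_i-\bm_j$ for the reduced matrix $\MM$ of Lemma~\ref{mij<alj}, without first assuming $m_{ij}=m_{ji}=0$. It gives a coordinate $k$ with $|\sigma_k(\bv)|\ge\alpha_k$, and there are only three cases:
\begin{itemize}
\item $k\ne i,j$ is impossible, because both $m_{ik}$ and $m_{jk}$ lie in $[0,\alpha_k-1]$.
\item $k=j$ means $m_{ij}+1\ge\alpha_j$, i.e.\ $m_{ij}=\alpha_j-1$.
\item $k=i$ means $m_{ji}=\alpha_i-1$.
\end{itemize}
So for every $i\ne j$ we get $m_{ij}=\alpha_j-1$ or $m_{ji}=\alpha_i-1$, and the same dichotomy holds for $\MM'$.

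Your Step~1 duality then finishes the proof. If $m_{ij}=\alpha_j-1\ge1$, then $m'_{ji}=0$. The dichotomy for $\MM'$ then forces $m'_{ij}=\alpha_j-1\ge1$, and duality again gives $m_{ji}=0$. This is exactly the paper's argument, and it makes both Step~2 and your concern about $2f\in H$ unnecessary.
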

 
\begin{proof}
Fix an $f \in \PF(H), f \ne \Fr(H)$.
By Lemma \ref{mij<alj},
there exists an RF-matrix   $\MM = \RF(f)= (m_{ij})$ 
 such that  $m_{ij}< \al_j$ for every $i\ne j$.
Let  ${\bf m}_i = (m_{i1},  \ldots , m_{ie})$ denote the 
$i$-th row of $\MM$. 
 Fix two arbitrary $i \ne j$, and 
consider $\bv = {\bf m}_i - {\bf m}_j\in \ker (\Phi_H)$.
By Corollary \ref{ideal membership}, there exists an index $k$ such that   $\sigma_k({\bf v}^+) \geq \alpha_k$ or $\sigma_k({\bf v}^-) \geq \alpha_k$.
If $\sigma_k({\bf v}^+) = m_{ik}-m_{jk} \geq \alpha_k$, since $m_{ik} < \alpha_k$,
then necessarily  $k = j$, so that $m_{jk}=-1$, and $m_{ij}= \alpha_{j}-1$, thus, $\sigma_j({\bf v}^+) = \alpha_j$. 
Similarly, if $\sigma_k({\bf v}^-) \geq \alpha_k$ then we obtain $m_{ji}=\alpha_i-1$. 
In conclusion, we have $m_{ij}=\alpha_j-1$ or $m_{ji}=\alpha_i-1$.

Since $H$ is almost-symmetric, $f'= \Fr(H)-f$ is another pseudo-Frobenius number of $H$ with $f' \ne \Fr(H)$.
Repeating the argument for $f'$, we obtain an RF-matrix $\MM' =\RF(f') = (m'_{ij})$ such that, for every $i\ne j$, at least one of $m'_{ij}, m'_{ji}$ is non-zero.
By \cite[Proposition 4]{Mo} or \cite[Lemma 2.1]{E}, we have 
$m_{ij}m'_{ji}=0$ whenever $i \ne j$.
It follows that we cannot have both $m_{ij}=\alpha_j-1$ and $m_{ji}=\alpha_i-1$.
This concludes the proof.
\end{proof}

Next, we prove a strengthened version of Theorem \ref{ThmRFRelations} in the  assumptions of this section.

\begin{prop}\label{RF(f)}
Assume 
that $H$  is almost symmetric but not symmetric and that $I_H$ is critical.
For every $f \in \PF(H)\setminus\{ \Fr(H)\}$,
consider an RF-matrix $\MM=\RF(f)$ as in Proposition \ref{mj=alj-1}.
Then, the ideal  $I_H$ is generated by RF-relations of $f$ arising from $\MM$.
\end{prop}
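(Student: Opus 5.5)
The plan is to reduce the statement to producing, for each index $j$, one critical binomial for $x_j$ that is an $\RF$-relation read off from $\MM$. First I reduce to the almost complete intersection case. Since $I_H$ is critical, $\mu(I_H)\le e$, and $H$ is not a complete intersection: complete intersections are symmetric, and $H$ is not. So $H$ is an almost complete intersection, and Theorem \ref{ThmCriticalIffAci} says that \emph{any} choice of critical binomials, one for each $x_1,\ldots,x_e$, generates $I_H$ minimally. It therefore suffices to show that for every $j$ there is an index $i\ne j$ such that $F_{ij}(\MM)$ is a critical binomial with respect to $x_j$.

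Fix $j$, and let $\bm_i$ denote the $i$-th row of $\MM$. By Proposition \ref{mj=alj-1}, for every $k\ne\ell$ exactly one of $m_{k\ell}=\al_\ell-1$, $m_{\ell k}=\al_k-1$ holds, and the other entry is $0$. Write $k\to\ell$ when $m_{k\ell}=\al_\ell-1$; this makes $\{1,\ldots,e\}$ a tournament. For $i\to j$ the vector $\bv=\bm_i-\bm_j$ has the following entries:
\begin{itemize}
\item $\sigma_j(\bv)=\al_j$;
\item $\sigma_i(\bv)=-1$;
\item for $k\ne i,j$, $\sigma_k(\bv)=m_{ik}-m_{jk}\in\{0,\pm(\al_k-1)\}$.
\end{itemize}
Hence $F_{ij}(\MM)$ is critical for $x_j$ exactly when no $k\ne i,j$ has $i\to k$ but not $j\to k$. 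In tournament language: the out-neighbourhood of $i$, with $j$ removed, must be contained in the out-neighbourhood of $j$.

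The key step is to show that such an $i$ exists for every $j$. I would argue via the critical binomial rather than purely combinatorially. By Lemma \ref{betas}, take a critical binomial $F(\bv_j)=x_j^{\al_j}-\bx^{\bv_j^-}$ with all entries of $\bv_j^-$ below the corresponding $\al_k$, and choose $i\in\supp(\bv_j^-)$. Put $h=\al_jn_j$. By Theorem \ref{ThmRFRelations}, $h-n_i-n_j\notin H$, and $F(\bv_j)$ is an $\RF$-relation for an $\RF$-matrix of some $f'\in\PF(H)$ with $h-n_i-n_j\le_H f'$. Its rows $i$ and $j$ are $\bv_j^+-\be_i-\be_j+{\bf z}$ and $\bv_j^--\be_i-\be_j+{\bf z}$. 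I would then carry out two checks.
\begin{enumerate}
\item One can take $f'=f$, or at least $f'\ne\Fr(H)$. For this I would use the almost symmetric pairing $f\leftrightarrow\Fr(H)-f$ together with the vanishing $m_{ij}m'_{ji}=0$ from \cite[Proposition 4]{Mo}, already used in Proposition \ref{mj=alj-1}.
\item These two rows have all entries below the corresponding $\al_k$.
\end{enumerate}
Once both checks hold, the two rows extend to an $\RF$-matrix of $f$ with all entries below the $\al$'s, because Lemma \ref{mij<alj} supplies the remaining rows. The uniqueness part of Lemma \ref{mij<alj} then identifies this matrix with $\MM$, so $F(\bv_j)=F_{ij}(\MM)$, as required.

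The main obstacle is the bounding in these two checks, in particular controlling ${\bf z}$ and showing $z_k+\sigma_k(\bv_j^\pm)<\al_k$. My first attempt would be a reduction argument in the spirit of Lemma \ref{mij<alj}: whenever an entry reaches $\al_k$, subtract $\bv_k$ from the corresponding row, and invoke Corollaries \ref{ideal membership} and \ref{new3.2} to rule out cycling and to keep $\sigma_j=\al_j$ on the positive side. If that fails, the fallback is purely combinatorial. Among the in-neighbours $i$ of $j$, choose one with the fewest out-neighbours. If the containment fails for some $k$, then Corollary \ref{ideal membership}, applied to the difference of the two binomials, produces a relation among the $\bv$'s. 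Such a relation violates Corollary \ref{new3.2} unless it involves all indices, and I would then derive a contradiction from that case.
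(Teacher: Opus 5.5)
Your reduction is correct: a critical $I_H$ with $H$ not symmetric is an almost complete intersection, so by Theorem \ref{ThmCriticalIffAci} it suffices to find, for each $j$, some $i$ with $F_{ij}(\MM)$ critical for $x_j$. Your tournament reformulation of that condition is also right, and your overall skeleton (Lemma \ref{betas}, then Theorem \ref{ThmRFRelations}, then uniqueness in Lemma \ref{mij<alj}) is the paper's. The gap is that your two ``checks'' are the entire content of the proposition, and you do not prove them. As you set it up, check (1) is in fact false.

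\emph{Check (1) depends on which $i$ you take.} You take an arbitrary $i\in\supp(\bv_j^-)$, but the pseudo-Frobenius number $f'$ produced by Theorem \ref{ThmRFRelations} depends on $i$. Take the example of the introduction, with generators ordered $91,75,97,99,107,64,95$ and $f=239$. There $\bv_1^-=\be_2+\be_5$. For $i=5$ you get $h-n_1-n_5=-16$, and $239+16=255\notin H$, so necessarily $f'=478=\Fr(H)$. Settling for ``$f'\ne\Fr(H)$'' would not help either. The uniqueness in Lemma \ref{mij<alj} is for RF-matrices of the fixed $f$, and at this stage you do not know that $\PF(H)\setminus\{\Fr(H)\}=\{f\}$. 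The paper chooses $i$ using $\MM$, with two cases:
\begin{itemize}
\item If some $i\in\supp(\bv_j^-)$ has $m_{ij}=\al_j-1$, then row $i$ of $\MM$ gives $h-n_i-n_j\le_H f$.
\item Otherwise Proposition \ref{mj=alj-1} gives $m_{jk}=\al_k-1\ge b_k$ on the whole support, and row $j$ of $\MM$ gives $h-n_j\le_H f$, so any $i$ in the support works.
\end{itemize}

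\emph{Check (2): the bounding cannot be done by reduction.} Your reduction idea does not work here.
\begin{itemize}
\item Subtracting $\bv_k$ from one of the two rows changes their difference by $\pm\bv_k$. Your sketch gives no control over the positive part of the new difference, and that control is exactly the containment you need.
\item Subtracting $\bv_k$ from both rows, i.e.\ from ${\bf z}$, requires $z_k\ge\al_k$. This fails when the entry $b_k+z_k$ of row $j$ is large only because of $b_k$.
\end{itemize}
The missing idea is to discard ${\bf z}$ and read the completion off $\MM$ itself. Put ${\bf z}'=\bm_j-\bv_j^-+\be_i+\be_j$ and show ${\bf z}'\in\N^e$. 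Fix $k$ with $\sigma_k(\bv_j^--\be_i)>0$. The row $\bv_j^--\be_i-\be_j+{\bf z}$ is a legitimate $j$-th row of some $\RF(f)$ (this uses the correct choice of $i$), and it is positive at $k$. Let $\MM''=\RF(\Fr(H)-f)$ be the matrix from Proposition \ref{mj=alj-1}. Then:
\begin{itemize}
\item the vanishing $m'_{jk}m''_{kj}=0$ gives $m''_{kj}=0$;
\item hence $m''_{jk}=\al_k-1$ by the dichotomy for $\MM''$;
\item hence $m_{kj}=0$ by the same vanishing applied to $\MM$ and $\MM''$;
\item hence $m_{jk}=\al_k-1\ge b_k$ by the dichotomy for $\MM$.
\end{itemize}
Next, $\sigma_i({\bf z}')=0$ because $f\notin H$. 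So $\bv_j^+-\be_i-\be_j+{\bf z}'$ is an $i$-th row with all entries below the $\al$'s. By uniqueness it equals $\bm_i$, and therefore $\bm_i-\bm_j=\bv_j$.

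Your combinatorial fallback cannot replace this argument. In the same example every row of $\RF(239)$ has three positive entries, and the in-neighbours of $1$ are $2,3,4$. Yet $\bm_3-\bm_1=(2,2,-1,0,-1,-2,0)$ is not critical. So ``fewest out-neighbours'' does not single out a valid $i$, and the contradiction you would derive ``from that case'' is exactly the step left unproved.
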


\begin{proof}
By Remark \ref{RemNotGluing}, Lemma \ref{betas} and Theorem \ref{ThmCriticalIffAci}, the  ideal
 $I_H$ is generated by critical binomials   $F(\bv_i) = x_i^{\alpha_i}-\prod_{j \ne i} x_j^{b_{ij}},$ such that  $b_{ij} < \alpha_j$ for every $j$.  
Following Notation \ref{NotationCritical},
denote
$$\bv_i = \bv_i^+-\bv_i^-  = \al_i{\bf e}_i-\sum_{j=1, j\ne i}^e b_{ij}{\bf e}_j
\quad \text{for }i = 1, \ldots, e.$$

Fix an $f \in \PF(H) \setminus \{\Fr(H)\}$, and let 
$\mathbb{M} = (m_{pq})$ be an RF-matrix for $f$ satisfying the conditions of 
Proposition \ref{mj=alj-1}, that is,  for every $p\ne q$, we have  either $m_{pq}=\alpha_q-1$ and $m_{qp}=0$ or $m_{pq}=0$ and $m_{qp}=\alpha_p-1$. Denote by $\bm_1,\ldots,\bm_e$ the rows of $\bbM$.

Fix an index $i$. 
We are going to  prove the  statement of the proposition by showing that $F(\bv_i)$ is an RF-relation of $f$ arising from $\MM$, 
specifically,
by showing that there exists an index $p$ such that $\bm_p-\bm_i=\bv_i$.

Let $J_i$ denote the set of indices $j$ such that $b_{ij} \ne 0$.
Let $h=\alpha_in_i=\sum b_{ij} n_j$ be the degree of $F(\bv_i)$, and, 
for every $j \in J_i$, let $h_j=h-n_i-n_j$, so that 
	\begin{equation}\label{fact}
		h_j=(\alpha_i-1)n_i - n_j = \left(\sum_{k \in J_i \setminus\{j\}} b_{ik}n_k \right) + (b_{ij}-1)n_j - n_i 
	\end{equation}

If there exists  $p \in J_i$ such that $m_{pi} \neq 0$, then $m_{pi} = \alpha_i-1$, thus, 
from the $p$-th row of $\mathbb{M}$ we obtain $h_p \le_H f$. 
On the other hand, if $m_{ji}=0$ for every $j \in J_i$, 
then  $m_{ij}=\alpha_j-1$ for every $j \in J_i$ by Proposition \ref{mj=alj-1}  and, since $b_{ij} \le \alpha_j-1$ for every $j \in J_i$, from the $i$-th row of 
$\mathbb{M}$ we obtain
	$$h-n_i \le_H \left(\sum_{j \in J_i} (\alpha_j-1)n_j \right) - n_i \le_H f.$$
In conclusion, we always have $h - n_i - n_p \le_H f$ for some index $p \in J_i$.

Let 
 ${\bf z}\in \N^e$ be a factorization of $f- h_p$
(observe that $f- h_p = f-(h-n_i)+n_p \in H$) and let 
$$\bm_i'=(m'_{i1},\ldots,m'_{ie})=\bv_i^-+{\bf z}-\be_i-\be_p.$$
Then  $\sigma_i(\bm_i')=-1$,  $\sigma_\ell(\bm_i')\geq \sigma_\ell(\bv_i^--\be_i-\be_p) \ge 0$ for $\ell \ne i$ since $p \in J_i$,
 and $\sum_{\ell=1}^e \sigma_\ell(\bm'_i)n_\ell = f$. 
Let $\bbM'$ be 
 the RF-matrix  of $f$ obtained from $\bbM$ by replacing $\bm_i$ with $\bm_i'$. 
Consider also  $f'' = \Fr(H)-f \in \PF(H)$, and let $\bbM''=(m''_{hk})$ be the RF-matrix of $f''$ obtained from Proposition \ref{mj=alj-1}. 

Let $\ell$ be  such that $\sigma_\ell(\bv_i^--\be_i-\be_p) > 0$. 
 Then, 
 $m'_{i\ell}=\sigma_\ell(\bm'_i) > 0$. 
By \cite[Proposition 4]{Mo} or \cite[Lemma 2.1]{E} applied to $\bbM'=\RF(f)$ and $\bbM''=\RF(\Fr(H)-f)$, it follows that
$m'_{i\ell}m''_{\ell i}=0$, that is, $m''_{\ell i}=0$. 
Applying Proposition \ref{mj=alj-1} to $\bbM''$,
it follows that $m''_{i\ell}=\alpha_\ell-1$.
Applying again  \cite[Proposition 4]{Mo} or \cite[Lemma 2.1]{E}  to $\bbM=\RF(f)$ and $\bbM''=\RF(\Fr(H)-f)$, we obtain $m_{\ell i}m''_{i\ell}=0$, that is $m_{\ell i}=0$, and finally, applying again  Proposition \ref{mj=alj-1} to $\bbM$ we obtain 
$$
\sigma_\ell(\bm_i)=m_{i\ell}=\alpha_{\ell}-1 \ge b_{i\ell}\ge\sigma_\ell(\bv_i^--\be_i-\be_p)$$
 for all $\ell$ such that $\sigma_\ell(\bv_i^--\be_i-\be_p)>0$.
It follows that ${\bf z}'=\bm_i-\bv_i^-+\be_i+\be_p \in \N^e$ is a factorization of $f-h+n_i+n_p\in H$ such that $\sigma_i({\bf z}')= 0$ and  $\sigma_\ell({\bf z}') < \alpha_\ell$ for all $\ell=1,\ldots,e$. 
We also have $\sigma_p({\bf z}')=0$, since
$\bv_i^+-\be_i-\be_p+{\bf z}'$ is also a factorization of $f \notin H$.

Finally, the vector $\bm'_p = \bv_i^+ - \be_i - \be_p + {\bf z}'$ is such that $\sigma_p(\bm'_p) = -1$, $0 \le \sigma_\ell(\bm'_p) \le \alpha_\ell-1$ for every $\ell \ne p$ and $\sum_{\ell=1}^e \sigma_\ell(\bm'_p)n_\ell=f$. However, since by Lemma \ref{mij<alj} the matrix $\bbM$ is the unique RF-matrix of $f$ satisfying $m_{ij} < \alpha_j$ for every $j \ne i$, it follows that $\bm'_p=\bm_p$, and then
\begin{equation*}
\bm_p-\bm_i=\bm'_p-\bm_i=(\bv_i^+ - \be_i - \be_p + {\bf z}')-(\bv_i^- - \be_i - \be_p + {\bf z}')=\bv_i.\qedhere    
\end{equation*}
\end{proof}

Proposition \ref{RF(f)} is false without the almost complete intersection assumption, see Example \ref{ce_RF(f)}.
Thus, Theorem \ref{ThmRFRelations} is the best result one can hope for in general.
\\

We are now ready to prove the forward direction of Theorem \ref{mainTh}.

\begin{thm}\label{forwardmainTh}
Let $H=\langle n_1, n_2, \ldots, n_e\rangle$ be a numerical semigroup
minimally generated by $e\ge 3$ elements. 
Assume that $H$ is almost symmetric  and $I_H$ is an almost complete intersection.
Then:
\begin{enumerate}
\item $H$ is  pseudo-symmetric;
\item $e=2e'+1$ is odd;
\item up to reordering the generators of $H$,
$I_H$ is generated by the binomials
\[
x_i^{\al_i} - x_{i+1}{x_{i+e'+1}^{\al_{i+e'+1}-1}} \qquad  i = 1,2 ,\ldots, e \]
where we identify $x_j = x_{j-e}$ and $\al_j =\al_{j-e}$ if $j> e$.
Moreover, this is the unique binomial set of generators of $I_H$; 
\item $f=\frac{\Fr(H)}{2}$ has an $\RF$-matrix
$\RF(f) = (m_{ij})_{1\le i, j \le e},$
where 
\[ m_{ij} = \left\{
\begin{array}{ll}
-1 & {\rm if\, } i=j, \\
0  &  {\rm if \,} i < j \le i+e', \\
\al_j-1 & {\rm otherwise}.
\end{array}
\right.
\]
Note that if $i > e' +1$ the condition $i < j \le i+e'$ reads $i<j \le e$ and $1\le j\le i-e'-1$.  
Moreover, this is the unique $\RF$-matrix of $f$.
\end{enumerate}
\end{thm}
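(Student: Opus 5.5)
By Theorem \ref{ThmCriticalIffAci}, $H$ is not symmetric (by Kunz, or since complete intersections are symmetric and $\mu(I_H)=e$). Hence $I_H$ is critical, and it is minimally generated by any $e$ critical binomials, one for each variable. By Remark \ref{RemNotGluing}, $H$ is not a gluing, so all results of Section \ref{SectionCriticalIdeals} apply. Fix $f\in\PF(H)\setminus\{\Fr(H)\}$ and take the RF-matrix $\MM=\RF(f)$ of Proposition \ref{mj=alj-1}. Its off-diagonal pattern defines a \emph{tournament} on $\{1,\ldots,e\}$: put $i\to j$ when $m_{ij}=\al_j-1>0$ (note $\al_j\ge2$ by minimality). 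By Proposition \ref{RF(f)}, every critical binomial $F(\bv_i)$, $i=1,\ldots,e$, is of the form $\bm_p-\bm_i$ for some row $p$. So we get a map $i\mapsto p(i)$, and the binomials $\bm_{p(i)}-\bm_i$ generate $I_H$.

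Next we analyse $\bv_i=\bm_p-\bm_i$. Its $i$-th coordinate is $m_{pi}+1=\al_i$, so $p\to i$. For $q\neq i,p$ we need $m_{pq}\le m_{iq}$, since $\bv_i$ has non-positive entries off $i$. Thus whenever $p\to q$ we must also have $i\to q$. The $p$-th coordinate is $-1-m_{ip}$, and $m_{ip}=0$ because $p\to i$; so $x_p$ appears with exponent exactly $1$. Hence
\[F(\bv_i)=x_i^{\al_i}-x_p\prod_{q:\,i\to q,\ p\not\to q}x_q^{\al_q-1},\]
where the relation $p\not\to q$ includes $q=p$. The degree equation $\al_in_i=n_p+\sum(\al_q-1)n_q$ should severely restrict the possibilities. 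The plan is to show that the out-neighbourhoods of $p$ and $i$ differ in exactly one vertex $q$. If two vertices $q,q'$ were involved, we would compare with the analogous binomial of $f'=\Fr(H)-f$, using the complementarity $m_{ij}m'_{ji}=0$ (\cite[Proposition 4]{Mo}). That complementarity forces $\RF(f')$ to have the transposed tournament. Combining the two expressions of $F(\bv_i)$ with uniqueness of critical binomials (Corollary \ref{ideal membership} together with Lemma \ref{betas}) should give a contradiction.

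The main obstacle is the combinatorial step of showing that the tournament is the regular circulant one, $i\to j$ iff $j-i\in\{e'+1,\ldots,e-1\}\bmod e$. The key inputs are these. (a) Every vertex $i$ has an in-neighbour $p(i)$ whose out-set is contained in $N^+(i)\cup\{i\}$. (b) The map $i\mapsto p(i)$ is injective: critical binomials for different $i$ have distinct degrees, but they are not distinct elements of the set of row differences unless the $p(i)$ differ, and the non-gluing condition (Lemma \ref{LemmaGluing}) forces each $x_j$ to appear in at least two generators. Hence $p$ is a permutation. Repeated application of the inclusion $N^+(p(i))\subseteq N^+(i)\cup\{i\}$ along cycles of $p$ gives equal out-degrees along each cycle. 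Since $p(i)\to i$, a single $e$-cycle with constant out-degree must then emerge. A regular tournament exists only for odd $e$ and has out-degree $e'$, which gives (2). Relabelling along the cycle makes $p(i)=i+1$ and the matrix takes the stated circulant form, giving (4). The generators become $x_i^{\al_i}-x_{i+1}x_{i+e'+1}^{\al_{i+e'+1}-1}$, giving (3).

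Uniqueness of $\RF(f)$ follows from Lemma \ref{mij<alj}, once we show that every RF-matrix has entries below $\al_j$. Otherwise subtracting a $\bv_j$ would produce a factorization of $f$, as in that proof. Uniqueness of the binomial generators follows from Lemma \ref{betas} plus Corollary \ref{ideal membership}. For (1), if $\PF(H)$ contained a third element, applying the same construction to it would produce another circulant RF-matrix. Row differences would then give the same critical binomials, and hence the same rows; the rows determine $f$, so all non-Frobenius pseudo-Frobenius numbers coincide. Since $f$ and $\Fr(H)-f$ both lie in $\PF(H)\setminus\{\Fr(H)\}$, we get $f=\Fr(H)-f$, so $f=\Fr(H)/2$ and $H$ is pseudo-symmetric.
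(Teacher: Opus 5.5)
Your tournament picture is the paper's argument in different language: the inclusion $N^+(p(i))\subseteq N^+(i)\cup\{i\}$ is the paper's $m_{i+1,j}\le m_{ij}$. However, the decisive step, that $p$ is a single $e$-cycle along which out-degrees are constant, is not proved.

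\emph{Injectivity.} Your argument for it fails. The vectors $\bm_p-\bm_i$ and $\bm_p-\bm_j$ are distinct row differences. The fact that each $x_j$ occurs in two generators does not put $j$ in the image of $p$, since $x_j$ may occur only through a factor $x_q^{\al_q-1}$. Injectivity does follow from the tournament itself: $p(i)=p(j)=p$ gives $i,j\in N^+(p)$, hence both $i\to j$ and $j\to i$.

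\emph{Several cycles.} More seriously, ``since $p(i)\to i$'' does not exclude $p$ having several cycles. Two cyclic triangles, one dominating the other, satisfy all your local conditions and give a non-regular tournament. The missing input is Corollary \ref{new3.2}: for any cycle $C$ of $p$,
\[
\sum_{i\in C}\bv_i=\sum_{i\in C}(\bm_{p(i)}-\bm_i)={\bf 0},
\]
so $C=\{1,\ldots,e\}$. This is exactly how the paper relabels so that $p(i)=i+1$.

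\emph{Out-degrees.} Since $i\in N^+(p(i))$, the inclusion alone only gives $d^+(p(i))\le d^+(i)+1$. You need some $q$ with $i\to q$ and $p(i)\not\to q$. It exists because otherwise $\al_in_i=n_{p(i)}$, contradicting minimality of the generators. Passing from regularity to the circulant pattern also needs the remark that the inclusion makes the in-neighbourhood of each $j$ a cyclic interval starting at $j+1$.

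\emph{Uniqueness of $\RF(f)$.} It is not true that subtracting $\bv_j$ from a row with $m_{ij}\ge\al_j$ yields a factorization of $f$. Unless $i\in\{j+1,j+e'+1\}$, the vector $\bm_i-\bv_j$ keeps the entry $-1$ in position $i$ and is just another RF-row; this is why Lemma \ref{mij<alj} works at all. What does work is the paper's argument. Since $I_H=(F(\bv_1),\ldots,F(\bv_e))$, any two factorizations of $f+n_i$ are joined by moves $\pm\bv_j$. In the circulant form, $\bm_i+\be_i\pm\bv_j\notin\N^e$ for every $j$.

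\emph{Uniqueness of the generators.} Lemma \ref{betas} and Corollary \ref{ideal membership} only give uniqueness among critical binomials with exponents below $\al_j$. You need two further facts:
\begin{enumerate}
\item Every minimal binomial generator must be critical, because the degrees $\al_in_i$ are distinct (Theorem \ref{ThmCriticalIffAci}).
\item The same move argument applied to $\al_in_i$ shows its only factorizations are $\al_i\be_i$ and $\bv_i^-$.
\end{enumerate}

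\emph{Item (1).} The complementarity $m_{ij}m'_{ji}=0$ forces $\RF(\Fr(H)-f)$ to have the \emph{same} tournament as $\RF(f)$, not the transposed one. Combined with Proposition \ref{mj=alj-1}, this gives $\RF(\Fr(H)-f)=\RF(f)$ entrywise, hence $f=\Fr(H)-f$, which proves (1) directly. Your ``third element'' argument tacitly assumes that both matrices are circulant with respect to the same ordering. When some $\al_q=2$, the binomials alone do not determine $p$, so this needs proof.

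\emph{Non-symmetry.} This comes from Kunz's theorem. The fact that complete intersections are symmetric does not imply it.
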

\begin{proof}
Fix a $f \in \PF(H)$ such that $f \ne \Fr(H)$.
By Theorem \ref{ThmCriticalIffAci}, $I_H$ is generated minimally by $e$ critical binomials. 
By Proposition \ref{RF(f)},
they are RF-relations arising from
the RF-matrix $\bbM$ of Proposition \ref{mj=alj-1}. 
Let ${\bf m}_1,\ldots,{\bf m}_e$ be the rows of $\bbM$, and let $\bv_1, \ldots, \bv_e$ be the vectors corresponding to the minimal generators of $I_H$, 
as in Notation \ref{NotationCritical}.

For every $i$ there exist  $p_i,q_i$ such that $F_i = F({\bf m}_{p_i}-{\bf m}_{q_i})$, that is, ${\bf m}_{p_i}-{\bf m}_{q_i}={\bf v}_i$. 
Since the only positive coordinate of ${\bf v}_i$ is the $i$-th coordinate, it follows that $q_i=i$. 
Next, we prove that we can rearrange the generators $\{n_1,\ldots,n_e \}$ in a way such that $p_i=i+1$ for every $i$, that is, $F_i = F({\bf m}_{i+1}-{\bf m}_i)$, equivalently, ${\bf m}_{i+1}-{\bf m}_i = {\bf v}_i$.  
Observe that a permutation of the generators $n_1, \ldots, n_e$ induces the corresponding permutation on the coordinates of the vectors $\bv_1,\ldots,\bv_e$ and on both rows and columns of the $\RF$-matrix $\MM$.

Fix the first generator $n_1$.  
Choose the second generator $n_2$ so that  $F_1=F(\bm_2-\bm_1)$. 
Assume that we have chosen the  generators $n_1,\ldots,n_i$, for some $i < e$, 
and that the vectors $\bv_1,\ldots,\bv_i$ and $\bm_1,\ldots,\bm_i$ satisfy  the relation $F_j=F(\bv_j)=F(\bm_{j+1}-\bm_{j})$  for every $j < i$. 
By the previous paragraph, we have $F_i = F(\bv_i)=F(\bm_{p_i}-\bm_i)$. 
If $p_i < i$,
then $${\bf v}_i = {\bf m}_{p_i}-{\bf m}_i= ({\bf m}_{p_i}-{\bf m}_{p_i+1})+ (\cdots) + ({\bf m}_{i-1} - {\bf m}_i)= -{\bf v}_{p_i} - {\bf v}_{p_i+1} - \cdots - {\bf v}_{i-1}$$
and therefore ${\bf v}_{p_i}+\cdots+{\bf v}_i = 0$, 
which contradicts Corollary \ref{new3.2} since
 $p_i < i < e$. 
Thus, we have $p_i > i$ and, up to swapping $n_{i+1}$ with $n_{p_i}$, we may assume $p_i = i+1$ and thus  $F_i = F(\bm_{i+1}-\bm_i)$.

This algorithm allows us to choose the generators $n_1,\ldots,n_{e-1}$ so that  $F_i =F(\bm_{i+1}-\bm_i)$ for every $i=1\,\ldots,e-1$. 
Naturally, in this way we have also fixed the final generator $n_e$.
Consider the relation $F_e = F(\bv_e)=F(\bm_{p_e}-\bm_e)$.
We must have $p_e < e$, thus, as before, 
  we obtain
$${\bf v}_e ={\bf m}_{p_e}-{\bf m}_e=-{\bf v}_{p_e} - {\bf v}_{p_e+1} - \cdots - {\bf v}_{e-1}$$
and therefore 
${\bf v}_{p_e}+\ldots+{\bf v}_e = 0$, and again  we deduce $p_e=1$ by Corollary \ref{new3.2}. 
In particular, we have proved that 
$$
\bv_1 + \cdots + \bv_e = 0.
$$

For the rest of the proof, we consider the set $\{1, \ldots, e\}$ of rows and columns to be ordered circularly; thus, for example, we identify an index $j$ with $j-e$ if  $e < j \leq 2e$,
we consider row $e$ to be directly above row 1, etc.  

For each $i$,
the vector  ${\bf v}_i={\bf m}_{i+1}-{\bf m}_i$
 has a unique positive entry, namely, the $i$-th entry,
 and this entry is at least 2 since $\{n_1, \ldots, n_e\}$  is the minimal generating set of $H$.
This implies 
\begin{equation}\label{EqEndCircle}
m_{i+1,i} = \alpha_i + m_{i,i} = \alpha_i-1 \ge 1
\end{equation}
and
\begin{equation}\label{EqCircular}
 m_{i+1,j} \leq m_{i,j} \qquad \text{for  }j \ne i.
\end{equation}
In particular, for $j \ne i$ we have that $m_{i+1,j}>0$ implies $m_{i,j}>0$, and then $\supp(\bm_{i+1}) \subseteq \supp(\bm_i) \cup \{i\}$.
Thus, for each $j$,  the $j$-th column of $\bbM$  contains a $-1$ in position $j$, then a string of consecutive 0s above it, and then a string of positive entries above it.

We claim that 
 the number of positive entries in each row of $\bbM$ is constant.
To see this, let $\pi_i$ denote the number of positive entries in $\bm_i$. 
It follows from Theorem \ref{ThmCriticalIffAci} that 
 ${\bf v}_i^-$ has at least two positive entries.
 Then, there exist at least two indices $j$ for which $m_{i+1,j}<m_{i,j}$,
 in particular, this happens for at least one $j \ne i+1$. For such $j$, we have $m_{i+1,j}-m_{i,j} < 0$. However, by Proposition \ref{mj=alj-1}, all positive entries of $\MM$ are equal to $\alpha_j-1$: therefore, this must imply that $m_{i+1,j}=0$ and $m_{i,j} > 0$. Then $\supp(\bm_{i+1}) \ne \supp(\bm_i) \cup \{i\}$, and $\pi_{i+1}\leq \pi_i$ for all $i$. Since this is a circle of inequalities, it follows that $\pi_i$ is constant.
 
Since the total number of positive entries is ${e \choose 2}= e\frac{e-1}{2}$ by Proposition \ref{mj=alj-1}, it follows that each row has $\pi_i = e'=\frac{e-1}{2}$ positive entries, in particular, $e = 2e'+1$ is odd.
This, together with Proposition \ref{mj=alj-1} and the structure of each column of  $\bbM$ established earlier, 
determines the matrix $\bbM$ completely, proving item (4) in the statement of the theorem. 
The description of $F_i=F(\bv_i)$ in item (3) also follows.

Moreover, the equation $F_i = F({\bf m}_{i+1}-{\bf m}_i)$ gives the minimal generators, proving point (3).

Since $f$ was arbitrary, but the RF-matrix is uniquely determined, we conclude that there is  a unique $f \in \PF(H)$ with $f \ne \Fr(H)$,
that is, $H$ is pseudo symmetric.

It remains to show that $I_H$ has a unique minimal set of binomial generators and that $\bbM$ is the unique RF-matrix of $f$.

For the first statement,
 let $F(\bw)$ be another minimal generator. 
By the exchange property of minimal sets of generators, we may assume, without loss of generality, that $\{F(\bw), F(\bv_2),\ldots,F(\bv_e)\}$ is another set of generators of $I_H$.
Since every set of generators must contain a critical binomial for each $x_i$, and since $\alpha_i n_i \ne \alpha_j n_j $ for all $i \ne j$ by Theorem \ref{ThmCriticalIffAci}, it follows that $F(\bw)$ must be a critical binomial for $x_1$, thus,  $\bw_1^+=\alpha_1\be_1$. 
If $\sigma_\ell(\bw_1^-) < \alpha_\ell$ for every $\ell$, then $\bv_1^--\bw_1^- \in \ker(\Phi_H)$ is such that $\sigma_\ell(\bv_1^--\bw_1^-) < \alpha_\ell$ for every $\ell=1,\ldots,e$,
 contradicting Corollary \ref{ideal membership}.
Suppose then that $\sigma_{j_1}(\bw^-_1) \geq \alpha_{j_1}$ for some index $j_1 \ne 1$. 
Then ${\bf z}_1=\bw_1^- -\bv_{j_1} \in \N^e$ is again a factorization of $\alpha_1n_1$. 
If $\sigma_\ell({\bf z}_1) < \alpha_\ell$ for every $\ell$, 
we reach again a contradiction; 
 if $\sigma_{j_2}({\bf z}_1) \geq \alpha_{j_2}$ for some index $j_2 \ne 1$, we can repeat this step. 
 Since the number of factorizations of $\alpha_1n_1$ is finite, if this algorithm does not halt then we eventually obtain the same factorization twice, implying $\bv_{j_r}+\cdots+\bv_{j_s} = {\bf 0}$ for some  $j_r,\ldots,j_s$, and since $\bv_1$ does not appear in this sum, we reach a contradiction by Corollary \ref{new3.2}.

 For the last statement,
 suppose  there is another RF-matrix $\bbM'$ for $f$, 
 and assume without loss of generality that its first row is $\bm_1'\ne\bm_1 =-\be_1+\sum_{i=e'+2}^{2e'+1} (\alpha_i-1)\be_i.$
Since $\bm_1+\be_1$ and $\bm'_1+\be_1$ are distinct factorizations of $f+n_1$ and $I_H=(F(\bv_1),\ldots,F(\bv_e))$, 
they must be connected by a chain of factorizations of $f+n_1$   such that every two consecutive elements of the chain  differ by $\pm \bv_i$, for some  $i$. 
Since $\bv_i = \alpha_i\be_i-\be_{i+1}- (\alpha_{i+e'+1}-1)\be_{i+e'+1}$,
it follows  that $\bm_1+\be_1 \pm \bv_i \notin \N^e$ for every $i$, and thus such a chain cannot exist. 
\end{proof}

\section{Cascade polynomials}\label{SectionCascade}

In this section, we introduce a new class of matrices, called \emph{cascade matrices}, and investigate  their determinant and minors in detail.
To better capture its combinatorial structure, 
we define this class in a multivariate setting;
the matrices that play a prominent role in Theorems \ref{forwardmainTh} and \ref{conversemainTh}
are positive integer specializations of cascade matrices.

Let $e=2e'+1$ be a positive odd integer, and let $y_1, \ldots, y_e$ be variables.
The \emph{$e$-th cascade matrix}  $\YY_e$  is defined as
$$
(\YY_e)_{(i,j)}=\left\{\begin{array}{ll}
-1 & i=j \\
0  &  i < j \le i+e' \\
y_j & {\rm otherwise}.
\end{array} \right.
$$
For instance,  the ninth cascade matrix is
$$
\YY_9 =
\begin{pmatrix}
-1 & 0 & 0 & 0 & 0 & y_6 & y_7 & y_8 & y_9 \\
y_1 & -1 & 0 & 0 & 0 & 0 & y_7 & y_8 & y_9 \\
y_1 & y_2 & -1 & 0 & 0 & 0 & 0 & y_8 & y_9 \\
y_1 & y_2 & y_3 & -1 & 0 & 0 & 0 & 0 & y_9 \\
y_1 & y_2 & y_3 & y_4 & -1 & 0 & 0 & 0 & 0 \\
0 & y_2 & y_3 & y_4 & y_5 & -1 & 0 & 0 & 0 \\
0 & 0 & y_3 & y_4 & y_5 & y_6 & -1 & 0 & 0 \\
0 & 0 & 0 & y_4 & y_5 & y_6 & y_7 & -1 & 0 \\
0 & 0 & 0 & 0 & y_5 & y_6 & y_7 & y_8 & -1
\end{pmatrix}$$
The determinant of $\YY_e$ is called the \emph{$e$-th cascade polynomial} and denoted
$$
P_e = \det(\YY_e) \in \Z[y_1, \ldots, y_e].
$$
Its monomials are all squarefree, and we denote them by 
$$\by_\cS = \prod_{i \in \cS} y_i
\quad \text{where } \cS \subseteq \{1,\ldots,e\}.
$$
While  not obvious from the definition, 
we are going to show in this section that all coefficients of $P_e$ are non-negative, except for the constant term, 
by providing an enumerative interpretation for them.

\begin{thm}\label{ThmCascadePolynomial}
Let $e = 2e'+1\geq 3$ be an odd  integer.
Let $\Gamma$ be the cycle with vertices $\{1,\ldots,e\}$ and edges $\{(i, i+e')\,:\, i = 1, \ldots, e\}$.  
Let $\cS \subseteq \{1,\ldots,e\}$.
The subgraph  of $\Gamma$  induced by  
 $\cC=\{1,\ldots,e\} \setminus \mathcal{S}$
is a disjoint union of connected components 
of sizes $i_1,\ldots,i_r$.

Then,
the monomial $\by_\cS$ appears in $P_e$ with coefficient $e'-\sum_{j=1}^r \left \lceil \frac{i_j}{2} \right \rceil$, and this coefficient is non-negative if $\cS$ is not empty.
In particular, $P_e$ has total degree $e$ and leading term  $e' \cdot y_1\ldots y_e$.
\end{thm}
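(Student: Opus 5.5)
Since $\det(\YY_e)$ is multilinear in the columns and variable $y_j$ occurs only in column $j$, the coefficient of $\by_\cS$ is obtained by substituting $y_j=0$ for $j\notin\cS$ and extracting the coefficient of $\prod_{j\in\cS}y_j$. Equivalently, we expand each column $j\in\cS$ as its $y_j$-part (the column with $-1$ deleted, entries $y_j$ replaced by $1$) and each column $j\in\cC$ as just $-\be_j$. The coefficient of $\by_\cS$ is then $\pm$ the determinant of the $\cS\times\cS$ principal submatrix of the $0/1$ matrix $B=(b_{ij})$, where $b_{ij}=1$ iff $(\YY_e)_{ij}=y_j$, i.e. iff $j-i \bmod e\in\{e'+1,\ldots,e-1\}$. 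The precise form is
$$[\by_\cS]P_e=(-1)^{|\cC|}\det(B_{\cS,\cS}).$$
Here we expand along the $-1$'s of the $\cC$-columns, so rows indexed by $\cC$ are removed. Thus the first step is to reduce the theorem to computing $\det(B_{\cS\cS})$, where $B$ is the circulant tournament matrix: $b_{ij}=1$ iff $i-j\in\{1,\ldots,e'\}\bmod e$.

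Next, $B+B^\intercal=J-I$, since $B$ is the adjacency matrix of the regular circular tournament. For a principal submatrix $T=B_{\cS\cS}$, of size $s=|\cS|$, this gives $T+T^\intercal=J_s-I_s$. I would compute $\det T$ by relating it to $\det(T-\tfrac12(J-I))$, which is skew-symmetric, and applying the matrix determinant lemma for the rank-one perturbation $\tfrac12 J$. The resulting formula is
$$\det(A+\tfrac12(J- I))=\det(A-\tfrac12 I)+\tfrac12\,\mathbf 1^\intercal\operatorname{adj}(A-\tfrac12 I)\mathbf 1 .$$
This seems messy. A more concrete route is to exploit the circular-interval structure directly: order $\cS$ cyclically, so that $T$ is a ``locally circulant'' tournament matrix. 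Then perform the row operations $R_k\mapsto R_k-R_{k+1}$ along the cyclic order of $\cS$, mirroring the consecutive-row differences used in the proof of Theorem~\ref{forwardmainTh}. This makes the matrix very sparse: each difference row has a $+1$ and a $-1$ at the two endpoints of the out-neighbourhood window. The gaps in the window occur exactly at positions of $\cC$, and a resulting sparse (bidiagonal-plus-corner) matrix should have a determinant computable by a single cycle expansion.

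The main obstacle is organizing this computation so that the answer visibly comes out as $e'-\sum_j\lceil i_j/2\rceil$ in terms of the components of $\cC$ in the graph $\Gamma$, whose edges join $i$ and $i+e'$. The reason $\Gamma$ appears is that removing the index $c\in\cC$ affects the windows of rows $c-e'$ and $c+e'$ (mod $e$), which is why steps of $e'$ link missing indices. My plan is to induct on $|\cC|$. For $\cC=\emptyset$, verify $\det(\YY_e)|_{y=1}\ldots$ i.e. the leading coefficient is $e'$: the circulant $B$ has eigenvalues $\sum_{k=1}^{e'}\omega^{k}$, whose product can be evaluated to give $\pm e'$. Then show that deleting one more index from $\cC$ changes the coefficient by $-1$ or $0$ according to whether it increases $\sum\lceil i_j/2\rceil$. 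Adding a vertex to a path component increases $\lceil i/2\rceil$ exactly when the size becomes odd, and joining two components must be handled similarly. I would establish this local rule by a column/row expansion of the sparse difference matrix at the new index. Finally, the sign bookkeeping $(-1)^{|\cC|}$ must be checked, and non-negativity for $\cS\ne\emptyset$ follows because each component of size $i_j$ in a cycle of length $e$ leaves at least $\lfloor i_j/2\rfloor$ gaps, giving $\sum\lceil i_j/2\rceil\le e'$ when $|\cC|<e$. When $\cS=\emptyset$, $\cC$ is the whole cycle, which has $\lceil e/2\rceil=e'+1$, and the constant term is $-1$, consistent with $\det(-I)$.
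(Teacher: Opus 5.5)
Your reduction $[\by_\cS]P_e=(-1)^{|\cC|}\det(B_{\cS\cS})$ and your base case $\det B=e'$ (via the eigenvalues $\sum_{k=1}^{e'}\omega^{ik}$) coincide with the paper's first two steps. Beyond that, the proposal is a plan whose key step is never proved, and its concrete mechanism fails.

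\textbf{The sparsity claim is false.} You claim that consecutive differences $R_{s_k}-R_{s_{k+1}}$, taken in the cyclic order of $\cS$, have a single $+1$ and a single $-1$. Take $e=9$ and $\cS=\{1,2,3,7\}$: rows $3$ and $7$ of $B_{\cS\cS}$ have supports $\{1,2\}$ and $\{3\}$, so $R_3-R_7=\be_1+\be_2-\be_3$. So there is no bidiagonal-plus-corner structure and no ``single cycle expansion''.

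\textbf{The local rule is only asserted.} This rule carries the whole content of the theorem. Write $\tau_\cC=\sum_j\lceil i_j/2\rceil$ and let $E_{cc}$ be the matrix unit. Using $b_{cc}=0$, the rule that moving $c\in\cS$ into $\cC$ lowers the coefficient by $\tau_{\cC\cup\{c\}}-\tau_\cC$ is equivalent to
\[
\det\bigl(B_{\cS\cS}+E_{cc}\bigr)=\det(B_{\cS\cS})+\det(B_{\cS\setminus\{c\},\,\cS\setminus\{c\}})=(-1)^{|\cC|}\bigl(\tau_{\cC\cup\{c\}}-\tau_\cC\bigr).
\]
This is a global determinant identity, not something a row or column expansion at the single index $c$ obviously yields, and your sketch gives no way to prove it. Your heuristic that $c$ ``affects rows $c\pm e'$'' also does not show why $\Gamma$ governs the answer.

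\textbf{The missing idea} is to move the computation from $\cS$ to the complement $\cC$, where $\Gamma$ becomes visible.
\begin{enumerate}
\item Let $Q$ be the cyclic shift, so $B=Q+\cdots+Q^{e'}$. From $B(I+Q^{e'})=J-I$ and $BJ=e'J$ one gets $B^{-1}=\tfrac1{e'}J-I-Q^{e'}$.
\item Jacobi's complementary minor theorem gives $\det(B_{\cS\cS})=e'\det\bigl((B^{-1})_{\cC\cC}\bigr)$.
\item The matrix $(B^{-1})_{\cC\cC}$ is a rank-one perturbation of $-(I+Q^{e'})_{\cC\cC}$. The matrix determinant lemma reduces everything to $\det(I+Q^{e'})_{\cC\cC}$ and the entry sum of its inverse.
\item The relabelling $\lambda e'+1\mapsto\lambda$ turns $Q^{e'}$ into the ordinary shift, i.e.\ $\Gamma$ into the standard cycle. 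For $\cS\neq\emptyset$ this makes $(I+Q^{e'})_{\cC\cC}$ block lower-bidiagonal, with one block per path component of $\Gamma|_\cC$.
\item A block of size $i_j$ has inverse with entry sum $\lceil i_j/2\rceil$ up to sign. This gives $e'-\sum_j\lceil i_j/2\rceil$ directly, with no induction.
\end{enumerate}

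\textbf{Non-negativity} should be argued as follows. When $\cS\neq\emptyset$, each component of $\Gamma|_\cC$ is followed along $\Gamma$ by its own vertex of $\cS$. Hence $\sum_j(i_j+1)\le e$, so $\sum_j\lceil i_j/2\rceil\le e/2$, and integrality gives the bound $e'$.
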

\begin{proof}
In this proof, 
we consider the set $\{1, \ldots, e\}$  to be ordered circularly,
as in the proof of Theorem \ref{forwardmainTh}.

Fix a non-empty subset $\cS \subseteq \{1,\ldots,e\}$, and let $c_\cS$ be the coefficient of  $\by_\cS$ in $P_e$. Consider
\begin{equation}\label{expsigma}
    P_e = \det \YY_e = \sum_{\sigma \in 
    \Sym(e)} \sgn(\sigma) \prod_{i=1}^e (\YY_e)_{i,\sigma(i)},
\end{equation}
where $\Sym$ denotes the symmetric group.
From the definition of $\YY_e$, we see that the terms of  \eqref{expsigma} with non-trivial contributions to $c_\cS$ correspond to permutations $\sigma \in \Sym(e)$ satisfying $\sigma(i) \not \in \{i,i+1,\ldots,i+e'\}$ for every $i \in \cS$, and $\sigma(i)=i$ for every $i \notin \cS$.
The contribution of such a permutation $\sigma$ to $P_e$ is equal to
 $$ \sgn(\sigma) \prod_{i=1}^e (\YY_e)_{i,\sigma(i)} = \sgn(\sigma) \cdot (-1)^{e-|\cS|} \by_\cS.$$
 
Consider the $e \times e$ matrix $\YY'_e = \YY_e|_{y_1, \ldots,y_e=1}+ I$, 
 obtained from $\YY_e$ by specializing  all the variables to 1 and adding 1 to every diagonal element (obtaining thus 0).
Let $\YY'_{e,\cS}$ be the submatrix of $\YY'_e$ whose rows and columns are indexed by $\cS$.  Since  $(\YY'_e)_{(i,j)} = 1$ if $j < i  \le j+e'$ and $(\YY'_e)_{(i,j)}=0$ otherwise, the permutations $\sigma \in \Sym(e)$ such that $\sigma(i) \notin \{i,i+1,\ldots,i+e'\}$ for every $i \in \cS$ and $\sigma(i)=i$ for every $i \notin \cS$ are exactly the permutations fixing the elements of $\cC$ which give  a non-zero contribution in the computation of $\det \YY'_e$. 
It follows that
\begin{equation}\label{minorcoeff}
 c_\cS = (-1)^{e-|\cS|} \det(\YY_{e,\cS}').
\end{equation}

Let $Q$ denote the permutation matrix associated to the full cyclic permutation $i\mapsto i+1$, that is, $Q_{i,j}=1$ if $i=j+1$ and $Q_{i,j}=0$ otherwise. 
It follows that  $$\YY'_e = Q + Q^2 + \ldots + Q^{e'}.$$ 
Denoting by ${\bf 1}_e$ the column vector where all entries are 1,
we have $\YY'_e(I+Q^{e'})=Q+\ldots+Q^{2e'} = J-I$, where $J={\bf 1}_e {\bf 1}_e^\intercal$ is the matrix which entries are all equal to $1$. 
Moreover, since every row of $\YY'_e$ 
contains $e'$ non-zero entries, all equal to $1$, we have $\YY'_eJ=e'J$. 
We deduce $$\YY'_e \left(\frac{1}{e'}J-I-Q^{e'} \right)=I,$$
in particular,  $\YY_e'$ is invertible, with inverse matrix  equal to
\begin{equation}\label{inversemat}
(\YY'_e)^{-1}=\frac{1}{e'}J - I - Q^{e'}.
\end{equation}
The eigenvalues of the permutation matrix $Q$ are $\omega^i$, where $\omega=\exp(\frac{2\pi\sqrt{-1}}{e}) \in \mathbb{C}$ and $i=0,\ldots,e-1$, with eigenvectors $(1,\omega^i,\omega^{2i},\ldots,\omega^{(e-1)i})^\intercal$. 
Then  $\YY'_e=Q+Q^2+\ldots+Q^{e'}$ has the same eigenvectors, with eigenvalues $\lambda_i = \sum_{j=1}^{e'} \omega^{ij}$, for $i=0,\ldots,e-1$. 
Clearly, $\lambda_0=e'$, and $$\lambda_i = \sum_{j=1}^{2e'} \omega^{ij} = \omega^i \left(\frac{\omega^{e'i}-1}{\omega^{i}-1}\right)$$ for  $i=1,\ldots,2e'$.
Since $\gcd(e',e)=1$, multiplication by $e'$ is bijective on $\Z/e\Z$, thus
$$\det \YY'_e = \prod_{i=0}^{2e'} \lambda_i = e' \prod_{i=1}^{2e'} \omega^i  \left(\frac{\omega^{e'i}-1}{\omega^{i}-1}\right)= e' \prod_{i=1}^{2e'} \omega^i  = e'.$$

In particular, if $\cS=\{1,\ldots,2e'+1\}$, then $\YY_{e,\cS}'=\YY'_e$ and $c_\cS=(-1)^{|\cS|-1} \det \YY_e' =  e'$.
This proves the last claim of the theorem.

In general, in order to compute $c_\cS$ using (\ref{minorcoeff}), we investigate $\det \YY'_{e,\cS}$. 
Let $\cC=\{1,\ldots,2e'+1\} \setminus \cS$; 
then, by Jacobi's Minor Theorem, we obtain
\begin{equation}\label{Jacobi}
\det(\YY_{e,\cS}')= \det(\YY_e') \cdot \det( (\YY_e')^{-1}_\cC)=e' \cdot \det( (\YY_e')^{-1}_\cC),
\end{equation}
where we denote by $(\YY_e')^{-1}_\cC$ the principal minor of the $e \times e$ inverse matrix $(\YY'_e)^{-1}$ whose rows and columns are indexed by $\cC$. 
Since $J={\bf 1}_e {\bf 1}_e^\intercal$, restricting to $\cC$ in (\ref{inversemat}) gives $$(\YY'^{-1}_e)_\cC= \frac{1}{e'}{\bf 1}_\cC {\bf 1}_\cC^\intercal + (-I-Q^{e'})_\cC.$$
We compute its determinant using the Matrix Determinant Lemma
$$\det((\YY'^{-1}_e)_\cC)=\det((-I-Q^{e'})_\cC + \frac{1}{e'}{\bf 1}_\cC {\bf 1}_\cC^\intercal)=(1+\frac{1}{e'}{\bf 1}_\cC^\intercal ((-I-Q^{e'})_\cC)^{-1}{\bf 1}_\cC )\det(-I-Q^{e'})_\cC,$$
 then, by (\ref{minorcoeff}) and (\ref{Jacobi}) it follows that
\begin{equation}\label{coeffinv1}
c_\cS = (-1)^{e-|\cS|} \det(-I-Q^{e'})_\cC \left( e'+ {\bf 1}_\cC^\intercal ((-I-Q^{e'})_\cC)^{-1}{\bf 1}_\cC  \right).
\end{equation}

We observe that, 
if $R$ is an $n \times n$ matrix, then ${\bf 1}_n ^\intercal R {\bf 1}_n$ equals the sum of all entries of $R$, 
in particular,  it is invariant with respect to any permutation of the indices, as is the determinant. 
We use this invariance to compute $c_\cS$ using  (\ref{coeffinv1}). However, in order to control the monomial in function of $\cS$, we also permute the labels of the rows and columns of the matrix. Our goal is to transform the matrix $(-I-Q^{e'})_\cC$ into a block diagonal matrix, in order to simplify the computation of both the determinant and the sum of its entries.

Since $\gcd(e,e')=1$, the set $\{\lambda e' + 1 \ | \ \lambda=0,\ldots,e-1\}$ is a complete system of residues modulo $e$ (seen in $\{1,\ldots,e\}$). Therefore the rearrangement of the indices given by $\lambda e'+1 \mapsto \lambda$ is well-defined. 
By construction this rearrangement sends pairs $\{i, i+e'\}$  to pairs $\{j,j+1\}$: therefore,  this rearrangement transforms $(-I-Q^{e'})$ into
$$-I-Q=\begin{pmatrix}
-1 & 0 & 0 & 0 &\cdots & -1 \\
-1 & -1 & 0 & 0 & \cdots & 0  \\
0 & -1 & -1 & 0 & \cdots & 0  \\
0 & 0 & -1 & -1 & \cdots & 0 \\
\vdots & \vdots & \vdots & \vdots & \vdots & \vdots \\
0 & 0 & 0 & 0 & \cdots & -1 \\
\end{pmatrix}.
$$
Observe that by construction the rows and columns of this matrix are labeled according to the cycle graph $\Gamma$, starting from the two indices $1$ and $e'+1$. 

Since $\cS$ is not empty,  we may assume, 
up to a shift (which corresponds to shifting the labels along the cycle $\Gamma$),  that $(-I-Q)_\cC$ does not contain the first row of $(-I-Q)$. 
This implies that $(-I-Q)_\cC$ is a lower triangular block matrix, thus, $\det((-I-Q)_\cC)=(-1)^{e-|\cS|}$, and by substituting in (\ref{coeffinv1}) we obtain
\begin{equation}\label{coeffinv2}
c_\cS=e'+{\bf 1}_\cC^\intercal ((-I-Q)_\cC)^{-1}{\bf 1}_\cC.
\end{equation}
Moreover, $(-I-Q)_\cC$ is a block diagonal matrix with blocks 
$B_1,\ldots,B_r$ of the form 
$$
\begin{pmatrix}
-1 & 0 & 0 & 0 &\cdots & 0  \\
-1 & -1 & 0 & 0 & \cdots & 0  \\
0 & -1 & -1 & 0 & \cdots & 0  \\
0 & 0 & -1 & -1 & \cdots & 0 \\
\vdots & \vdots & \vdots & \vdots & \vdots & \vdots \\
0 & 0 & 0 & 0 & \cdots & -1 
\end{pmatrix}.$$

By construction, there is a bijection between the labels of the rows and columns of each block $B_i$ and the connected components of the subgraph of $\Gamma$ induced by $\cC$.
Then, the inverse matrix $((-I-Q)_\cC)^{-1}$ is again 
block diagonal, 
with blocks 
$B_1^{-1},\ldots,B_r^{-1}$.
 Thus, 
$${\bf 1}_\cC^\intercal ((-I-Q)_\cC)^{-1} {\bf 1}_\cC = \sum_{j=1}^r {\bf 1}_\cC^\intercal B_j^{-1} {\bf 1}_\cC.$$
Let $i_1,\ldots,i_r$ be the sizes of the blocks $B_1,\ldots,B_r$. By the previous paragraph, $i_1,\ldots,i_r$ correspond to the sizes of the connected components of the subgraph of $\Gamma$ induced by $\cC$.
We prove that ${\bf 1}_\cC^\intercal B_j^{-1} {\bf 1}_\cC = - \lceil \frac{i_j}{2} \rceil$.
Let $$N_j = B_j+I=\begin{pmatrix}
0 & 0 & 0 & 0 &\cdots & 0  \\
-1 & 0 & 0 & 0 & \cdots & 0  \\
0 & -1 & 0 & 0 & \cdots & 0  \\
0 & 0 & -1 & 0 & \cdots & 0 \\
\vdots & \vdots & \vdots & \vdots & \vdots & \vdots \\
0 & 0 & 0 & 0 & \cdots & 0 \\
\end{pmatrix}.$$ 
Then, $N_j$ is nilpotent with order $i_j$, and 
$$B_j^{-1}=-(I-N_j)^{-1}=-I-N_j-N_j^2-\ldots-N_j^{i_j-1},$$ 
in particular,
\[ (B_j^{-1})_{pq}=\left\{
\begin{array}{ll}
0  & (p < q)  \\
(-1)^{p+q+1} & ({\rm otherwise}).
\end{array}
\right.\]
It follows that the  sum of entries on the $q$-th column of $B_j^{-1}$ is $-1$ if $i_j-q$ is even, and $0$ if $i_j-q$ is odd, so the sum of all entries is $-\lceil \frac{i_j}{2} \rceil$.
From (\ref{coeffinv2}) we obtain the desired claim
 $$c_\cS = e'- \sum_{j=1}^r \left \lceil \frac{i_j}{2} \right \rceil.$$

In order to conclude the proof, we show that this difference is non-negative.
Let $\tau_\cC := \sum_{j=1}^r \left \lceil \frac{i_j}{2} \right \rceil$. 
If $|\cC| = e-1$,  $\cC$ induces a path of size $2e'$ and $\tau_\cC =e'$.
Next, assume that $\tau_\cC \le e'$, let $p \in \cC$ and let $\cC'= \cC \setminus \{p\}$. 
Since $\cC$ is a union of disjoint paths, $p$ will belong to a certain path $\mathcal{P}$  of size $i_j$. 
By removing $\{p\}$ we split $\mathcal{P}$ in two paths $\mathcal{P}'$ and $\mathcal{P}''$ of size $\ell'$ and $\ell''$ such that $\ell'+\ell''=i_j-1$ (where one of the paths may be empty). Then $\lceil \frac{\ell'}{2}\rceil +  \lceil \frac{\ell''}{2}\rceil \le \lceil \frac{i_j}{2} \rceil$ and $\tau_{\cC'} \le \tau_\cC \le e'$, 
proving the desired conclusion.
\end{proof}

\begin{rem}
The combinatorial interpretation provided in Theorem \ref{ThmCascadePolynomial} 
gives some immediate information about  coefficients of the cascade polynomial:
\begin{enumerate}
\item If $|\cS|=1$, then $\cC$ is a path of size $2e'$, and $c_\cS=e'-e'=0$.
\item If $|\cS|=2$, then $\cC$ is either a path of size $2e'-1$, and thus $c_\cS=0$, or a union of two paths of size $j_1,j_2$, such that $j_1+j_2=2e'-1$, and it is easy to check that again $c_\cS=0$.
\item The minimum degree of a non-constant term is  $3$: if $\cS=\{1,e'+1,2e'+1\}$, then $\cC$ is a path of size $2e'-2$, and $c_\cS=e'-e'+1=1$. 
\item If $|\cS| \ge e'+2$, then $|\cC| \le e'-1$, and thus  $c_{\cS} > 0$. 
\item If $|\cS|=e-1$, then $\cC$ is a single vertex, and thus $c_\cS=e'-1$.
\item If $|\cS|=e$, then $\cC = \emptyset$ and $c_\cS=e'$. 
\item In general, the argument of the proof of Theorem \ref{ThmCascadePolynomial} shows that if $\cS \subseteq \cS'$ then $c_\cS \le c_{\cS'}$.
\end{enumerate}
\end{rem}

\begin{cor}\label{detpos}
Let $e= 2e'+1\geq 3$ be an odd integer,  $a_1, \ldots , a_e$ be positive integers and let  $\bbM= (m_{ij})$ be the matrix
\[ m_{ij} = \left\{
\begin{array}{ll}
-1 & \text{ if }i=j, \\
0  & \text{ if } i < j \le i+e',  \\
a_j & \text{otherwise}.
\end{array}
\right.\]
Then $\det(\bbM) > 0$, unless $e=3$ and $a_1=a_2=a_3= 1$.
\end{cor}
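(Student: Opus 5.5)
The matrix $\bbM$ is the specialization $y_j \mapsto a_j$ of the cascade matrix $\YY_e$, so $\det(\bbM) = P_e(a_1,\ldots,a_e)$. I will expand this using Theorem \ref{ThmCascadePolynomial}:
\[
\det(\bbM) = c_\emptyset + \sum_{\emptyset\ne \cS\subseteq\{1,\ldots,e\}} c_\cS\, \prod_{i\in\cS} a_i .
\]
Here every $c_\cS$ with $\cS\neq\emptyset$ is non-negative. Since each $a_i\ge 1$, each monomial satisfies $\prod_{i\in\cS}a_i \ge 1$, so
\[
\det(\bbM) \ge c_\emptyset + \sum_{\cS\ne\emptyset} c_\cS .
\]
The right-hand side is exactly $P_e(1,\ldots,1)$.

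The constant term is $c_\emptyset=\det(-I)=(-1)^e=-1$, because $e$ is odd. So it is enough to show that $P_e(1,\ldots,1)\ge 0$, and that the inequality above becomes strict when $P_e(1,\ldots,1)=0$ and $\ba\ne(1,\ldots,1)$. The only real computation is evaluating $P_e(1,\ldots,1)=\det(\YY'_e - I)$. In the proof of Theorem \ref{ThmCascadePolynomial}, $\YY'_e=Q+\cdots+Q^{e'}$ has eigenvalues $\lambda_0=e'$ and $\lambda_i=\omega^i(\omega^{e'i}-1)/(\omega^i-1)$. Hence $\det(\YY'_e-I)=(e'-1)\prod_{i=1}^{2e'}(\lambda_i-1)$.

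To evaluate the product I will write
\[
\lambda_i - 1 = \frac{\omega^{(e'+1)i}-1-(\omega^i-1)}{\omega^i-1}\cdot(\text{unit}) .
\]
A cleaner route is $\lambda_i-1=\sum_{j=0}^{e'}\omega^{ij}-2$; I expect a small calculation of this kind to show that the product is a positive real number for $e\ge5$. A safer alternative avoids eigenvalues entirely and uses the leading-term data. Since $c_\cS\ge 0$, there is the explicit term $c_{\{1,\ldots,e\}}=e'$, which gives
\[
\det(\bbM)\ \ge\ -1 + e'\prod_i a_i + \sum_{\cS\ne\emptyset,\{1..e\}} c_\cS\ \ge\ -1+e'.
\]
This is already positive when $e'\ge 2$, i.e.\ $e\ge 5$. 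So the eigenvalue computation is unnecessary, and I will use this argument.

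It remains to handle $e=3$, $e'=1$, where the bound only gives $\ge 0$. Here I compute directly. Remark items (1)–(2) give $c_\cS=0$ for $|\cS|\le2$, and $c_{\{1,2,3\}}=1$, so $P_3=y_1y_2y_3-1$. Then $\det(\bbM)=a_1a_2a_3-1>0$ unless $a_1=a_2=a_3=1$. The main point needing care is simply recognizing that the constant term is $-1$ and that the top coefficient $e'$ alone dominates it; there is no real obstacle.
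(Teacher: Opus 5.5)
Your final argument is correct and is essentially the paper's proof: the constant term of $P_e$ is $-1$, every other coefficient is non-negative, and the top coefficient $e'$ gives $\det(\bbM)\ge e'\,a_1\cdots a_e-1>0$ except when $e'=1$ and all $a_i=1$. The eigenvalue detour is unnecessary, and so is your separate $e=3$ computation, since the same bound reads $a_1a_2a_3-1$ there.
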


\begin{proof}

Since $\det(\bbM) = P_e(a_1,\ldots,a_e)$ by Theorem \ref{ThmCascadePolynomial} we deduce $\det(\bbM) \ge e' \cdot a_1\cdot \ldots \cdot a_e - 1 > 0$.
\end{proof}

\section{Proof of the converse direction of the main theorem}

This section is devoted to proving the following result.

\begin{thm}\label{conversemainTh}
Let $e= 2e'+1\geq 3$ be an odd integer,
let $\ba = (a_1, \ldots , a_e)$ be positive integers and let  $\bbM= (m_{ij})$ be the matrix
\[ m_{ij} = \left\{
\begin{array}{ll}
-1 & \text{ if }i=j, \\
0  & \text{ if } i < j \le i+e',  \\
a_j & \text{otherwise}.
\end{array}
\right.\]
Assume $\ba \ne (1,1,1)$.
Consider the column vector
\[(n_1, \ldots , n_e)^\intercal = \det(\bbM) \cdot \bbM^{-1}{\bf 1}_e,\]
 where  ${\bf 1}_e$ denotes the column vector of length $e$, 
whose every entry is $1$.
Then
\begin{enumerate}
    \item $\det(\bbM)>0$,
\item $n_i > 0$ for every $i=1,\ldots,e$.
\end{enumerate}
Moreover, if $\gcd(n_1, n_2, \ldots, n_e)=1$, then
\begin{enumerate}
\setcounter{enumi}{2}
\item $H = \langle n_1, n_2, \ldots, n_e \rangle$ has embedding dimension $e$,
\item $H$ is pseudo-symmetric with $\PF(H) = \{\det(\bbM),2\det(\bbM)\}$,
\item $I_H$ is critical, in particular, it is an almost complete intersection,
\item the exponents $\alpha_i$ of the critical binomials $F(\bv_i)$ are $\alpha_i = a_i +1$ 
for $i=1,2, \ldots, e$,
\item $\bbM$ is an RF-matrix for $\det(\bbM)$.
\end{enumerate}

\end{thm}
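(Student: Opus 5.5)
Item (1) is Corollary \ref{detpos}, since $\ba\neq(1,1,1)$. Throughout write $D=\det(\bbM)$ and $\mathbf n=(n_1,\ldots,n_e)^\intercal$. By construction $\bbM\mathbf n = D\,{\bf 1}_e$, i.e. $\sum_j m_{ij}n_j = D$ for every row $i$. So, once we know $n_i>0$, each row of $\bbM$ says that $D+n_i$ has the factorization given by the $i$-th row plus $\be_i$.

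\textbf{Positivity of the $n_i$ (item (2)).} I would use Cramer's rule: $n_i=D\,(\bbM^{-1}{\bf 1})_i$ is the determinant of $\bbM$ with its $i$-th column replaced by ${\bf 1}$. A first attempt is to expand this along the replaced column and write it as a combination of cofactors of $\bbM$. These cofactors are (up to sign) minors of cascade shape, and one can try to control their signs via the Jacobi/$\YY'$ computation of Theorem \ref{ThmCascadePolynomial}.

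A more robust route uses the structure of consecutive rows. Set $\bv_i=\bm_{i+1}-\bm_i$ (indices mod $e$). Computed from the cascade shape, $\bv_i=(a_i+1)\be_i-\be_{i+1}-a_{i+e'+1}\be_{i+e'+1}$, and $\bbM\mathbf n = D{\bf 1}$ gives $\bv_i\cdot\mathbf n=0$. Hence
\[(a_i+1)n_i=n_{i+1}+a_{i+e'+1}n_{i+e'+1}.\]
If some $n_i\le 0$, these cyclic relations should propagate non-positivity around the cycle $i\mapsto i+1,\ i+e'+1$. Summing a row would then give $D=\sum_j m_{ij}n_j\le 0$, contradicting (1). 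I expect the sign bookkeeping in this propagation argument to be the main obstacle, since the relations mix the two successors $i+1$ and $i+e'+1$. The first thing I would try is to take an index $i$ minimizing $n_i$ and derive a contradiction from the relation at $i-1$ and the relation at $i-e'-1$.

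\textbf{Items (3)–(7).} Assume $\gcd(n_i)=1$. The binomials $F(\bv_i)=x_i^{a_i+1}-x_{i+1}x_{i+e'+1}^{a_{i+e'+1}}$ lie in $I_H$.

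\emph{Lattice.} The $\bv_i$ generate $\ker\Phi_H$. The lattice $L=\langle\bv_i\rangle$ has rank $e-1$, because $\sum\bv_i=0$ and the $\bv_i$ span the differences of the rows of the invertible $\bbM$. Its index in $\ker\Phi_H$ should be computable through $D$ and the gcd condition. I expect to show index one by comparing $\gcd$ of the maximal minors with $D$, using that the $e-1$-minors of $(\bv_i)$ equal $D$ times $\mathbf n$ divided by $D$.

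\emph{Minimal generation and $\alpha_i$.} Next I would show that $I_H=(F(\bv_1),\ldots,F(\bv_e))$ and that $\alpha_i=a_i+1$. The strategy is to compute $\Ap(H,n_1)$. Its candidate elements are the factorizations with $\sigma_j<a_j+1$, reduced modulo the $\bv_i$. I would count them, expecting exactly $n_1$ classes, possibly by identifying $n_1$ with a cofactor that counts lattice points as in the proof of Theorem \ref{ThmCascadePolynomial}. The count shows that these classes form the Apéry set and that the binomials generate; $\alpha_i=a_i+1$ follows. Minimality of the generating set comes from distinct degrees. The embedding dimension is $e$ because $\alpha_i\ge2$.

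\emph{Pseudo-Frobenius numbers.} Maximal Apéry elements are then read off as the two ``corner'' factorizations $\sum_{j} a_j\be_j$ restricted to the two halves of the cascade. Their values minus $n_1$ give $\PF(H)=\{D,2D\}$, which yields pseudo-symmetry. For this I need $D\notin H$, which I would prove using the uniqueness argument of Corollary \ref{ideal membership}, since every entry of a row of $\bbM$ is below $\alpha_j$. Then each row of $\bbM$ realizes $D+n_i$, so $\bbM$ is an RF-matrix for $D$, giving (7).
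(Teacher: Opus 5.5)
Your item (1) and your lattice step (the maximal minors of the matrix of consecutive row differences are $\pm n_i$, so $\gcd=1$ makes $\langle\bv_1,\dots,\bv_e\rangle$ saturated, hence equal to $\ker\Phi_H$) agree with the paper. The other steps have genuine gaps.

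\textbf{Item (2).} The propagation is never carried out, and starting from a minimal $n_i$ cannot work by itself. The relations $\bv_i\cdot\mathbf n=0$ are homogeneous, so $-\mathbf n$ satisfies them too; at a negative minimum, the relation at $i$ only gives $a_i\le a_{i+e'+1}$. What the relations can give is that all $n_j$ share one sign, and for that you are missing a parity argument. Classify integers as ``non-negative'' or ``negative''. Since the cycle $i\mapsto i+e'$ has odd length, some pair $n_i,n_{i+e'}$ has the same sign. The relations at $i-1$ and $i+e'-1$, whose right-hand sides are $n_i+a_{i+e'}n_{i+e'}$ and $n_{i+e'}+a_{i-1}n_{i-1}$, transfer this sign to the pair $n_{i-1},n_{i+e'-1}$, hence to all $n_j$. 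Zero is then excluded, since the relations force all $n_j=0$. In the negative case ``summing a row'' is not enough, because the diagonal contributes $-n_i>0$. You must take the row of the largest $n_i$, which gives $D\le(a_{i+e'+1}-1)n_{i+e'+1}+\sum_{k=e'+2}^{2e'}a_{i+k}n_{i+k}\le 0$.

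\textbf{Items (3)--(7).} The central claim $I_H=(F(\bv_1),\dots,F(\bv_e))$ with $\alpha_i=a_i+1$ rests on a count you only expect. Nothing in your plan shows that bounded factorizations modulo the moves $\pm\bv_i$ give exactly $n_1$ classes; the proof of Theorem \ref{ThmCascadePolynomial} evaluates determinants and counts no lattice points. The paper's tool, absent from your proposal, is the following. Because $\sum_j\bv_j=0$, each $\bw\in\ker\Phi_H$ can be written $\sum_j\lambda_j\bv_j$ with all $\lambda_j>0$, and then $\sigma_j(\bw)=a_j(\lambda_j-\lambda_{j-e'-1})+(\lambda_j-\lambda_{j-1})$. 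Inspecting an index where $\lambda$ is maximal, and moving along $j\mapsto j-e'-1$ (transitive since $\gcd(e,e'+1)=1$), yields two facts: $\alpha_i=a_i+1$, and every nonzero $\bw$ has some $\sigma_k(\bw^+)\ge a_k+1$ or $\sigma_k(\bw^-)\ge a_k+1$. The second fact drives the reduction of factorizations by subtracting $\bv_k$'s. That reduction proves generation of $I_H$ and bounds the factorizations of Ap\'ery elements by the $a_j$.

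Your pseudo-Frobenius step also does not go through.
\begin{enumerate}
\item Corollary \ref{ideal membership} cannot show $D\notin H$, because a hypothetical second factorization of $D+n_1$ is not bounded by the $\alpha_j$. The paper instead observes that $\sum_{j\ge e'+2}a_j\be_j\pm\bv_i\notin\N^e$ for every $i$, so by generation this is the only factorization of $D+n_1$.
\item You do not address $2D\notin H$ at all; the paper needs a minimal-chain argument forcing every $\bv_j$ to occur.
\item Your ``two halves'' description of the maximal Ap\'ery elements is inaccurate: $2D+n_1$ corresponds to $\sum_{j\ge 2}a_j\be_j-\be_{e'+2}$.
\item Excluding a third pseudo-Frobenius number requires the bounded Ap\'ery factorizations plus an induction comparing $h$ with the expressions for $2D$ obtained from rows $1$ and $e'+i$ of $\bbM$.
\end{enumerate}
Finally, your ``distinct degrees'' claim is unproved. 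The almost complete intersection property follows more simply from $H$ being pseudo-symmetric, hence not symmetric, hence not a complete intersection.
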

The first item was proved in Corollary \ref{detpos}. 
We are going to prove each of the remaining items separately.
Throughout this section,
we assume the notation and hypotheses of Theorem \ref{conversemainTh}.
\begin{prop}
We have $n_i>0$ for all $i$.
\end{prop}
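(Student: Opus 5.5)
By Cramer's rule, $n_i=\det(\bbM)\,(\bbM^{-1}{\bf 1}_e)_i$ equals $\det(\bbM^{(i)})$, where $\bbM^{(i)}$ is $\bbM$ with its $i$-th column replaced by ${\bf 1}_e$. So the plan is to prove $\det(\bbM^{(i)})>0$. I will do this the same way Theorem \ref{ThmCascadePolynomial} handled $\det(\bbM)$: expand $\det(\bbM^{(i)})$ as a polynomial in the $a_j$, show that every coefficient is non-negative, and exhibit one monomial with a positive coefficient.

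\textbf{Step 1 (reduce to the cascade matrix).} Work in the multivariate setting. Let $\YY_e^{(i)}$ be $\YY_e$ with column $i$ replaced by ${\bf 1}_e$. Then $\det(\YY_e^{(i)})$ is squarefree in $y_j$ for $j\neq i$ and does not involve $y_i$. Since the circular shift of indices is a symmetry of $\YY_e$, I may assume $i=e$ or any other convenient value.

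\textbf{Step 2 (coefficients as signed minors).} Fix $\cS\subseteq\{1,\dots,e\}\setminus\{i\}$. As in the proof of Theorem \ref{ThmCascadePolynomial}, the permutations contributing to the coefficient of $\by_\cS$ fix every element of $\{1,\dots,e\}\setminus(\cS\cup\{i\})$. They send each $k\in\cS$ into the support of $y$-entries of row $k$, and they send the row $\sigma^{-1}(i)$, which may be any row in $\cS\cup\{i\}$, to the ones column. Summing over these permutations, the coefficient equals, up to the sign $(-1)^{e-|\cS|-1}$, the determinant of the principal submatrix of $\YY'_e$ on $\cS\cup\{i\}$ with its $i$-th column replaced by ones. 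By linearity in that column, this is
\[
\det(\YY'_{e,\cS\cup\{i\}})\cdot\Big(\big(\YY'_{e,\cS\cup\{i\}}\big)^{-1}{\bf 1}\Big)_i,
\]
or equivalently a sum of cofactors of $\YY'_{e,\cS\cup\{i\}}$.

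\textbf{Step 3 (evaluate via the explicit inverse).} To evaluate this quantity I will use the explicit inverse $(\YY'_e)^{-1}=\frac1{e'}J-I-Q^{e'}$ together with Jacobi's minor theorem, as in \eqref{Jacobi}, and the Sherman--Morrison / Matrix Determinant Lemma for the rank-one part $\frac1{e'}J$. After relabelling $\lambda e'+1\mapsto\lambda$, the complementary principal submatrix $(-I-Q)_\cC$ is block lower-bidiagonal, with one block for each path component of $\Gamma$ induced by $\cC$. Its inverse is given entrywise by alternating signs, as computed in the proof of Theorem \ref{ThmCascadePolynomial}. This should express each coefficient through the path decomposition, with a formula of the shape $e'-\sum_j\lceil i_j/2\rceil$ plus a correction depending on the component containing $i$. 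I expect that this leads to a non-negativity argument similar to the one in Theorem \ref{ThmCascadePolynomial}.

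\textbf{Step 4 (positivity).} For the top monomial $\cS=\{1,\dots,e\}\setminus\{i\}$, which is exactly the case $\cC=\emptyset$, the coefficient can be computed directly. It is the $i$-th entry of $\det(\YY'_e)\,(\YY'_e)^{-1}{\bf 1}_e=e'\,(1-1-1)\cdots$; more carefully, $(\YY'_e)^{-1}{\bf 1}_e=\frac{1}{e'}{\bf 1}_e$, so $\det(\YY'_e)(\YY'_e)^{-1}{\bf 1}_e={\bf 1}_e$ and the coefficient is $1>0$. Since the $a_j$ are positive and every coefficient is non-negative, this gives $n_i\ge\prod_{j\ne i}a_j>0$.

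The main obstacle is Step 3: controlling the signs and the correction term coming from the distinguished index $i$ in the cofactor sum. If that bookkeeping becomes unwieldy, my fallback is the column decomposition ${\bf 1}_e=e_i+u_i+w_i$. Here $u_i$ is the indicator of the rows where column $i$ carries $a_i$, so that column $i$ of $\bbM$ is $a_iu_i-e_i$, and $w_i$ is the indicator of rows $i+1,\dots,i+e'$. This gives
\[
n_i=\det(\bbM)+(a_i+1)\det(\bbM[e_i])+\det(\bbM[w_i]),
\]
where $\bbM[v]$ denotes $\bbM$ with column $i$ replaced by $v$. I would then treat the three terms separately; the first term is already handled by Corollary \ref{detpos}.
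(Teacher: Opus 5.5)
\textbf{There is a genuine gap: the non-negativity of the coefficients is never proved.} Your Steps 1, 2 and 4 are correct. With $T=\cS\cup\{i\}$, the coefficient of $\by_\cS$ in $\det(\YY_e^{(i)})$ is $(-1)^{e-|T|}$ times the determinant of $\YY'_{e,T}$ with its $i$-th column replaced by ones. For $T=\{1,\dots,e\}$ this coefficient is indeed $1$, and the constant term ($T=\{i\}$) is also $1$.

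But the whole content of the proposition is the claim that \emph{every} such coefficient is non-negative, and Step 3 does not prove it. It announces a formula ``of the shape'' $e'-\sum_j\lceil i_j/2\rceil$ plus an unspecified correction, and says you expect non-negativity to follow.

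Moreover, the tools you cite do not compute the quantity you need. Jacobi's theorem and the Matrix Determinant Lemma, as used in Theorem~\ref{ThmCascadePolynomial}, evaluate the principal minor $\det(\YY'_{e,T})$. You need instead the cofactor sum $\be_i^\intercal\,\mathrm{adj}(\YY'_{e,T})\,{\bf 1}$. Your expression $\det(\YY'_{e,T})\cdot((\YY'_{e,T})^{-1}{\bf 1})_i$ is undefined whenever $\YY'_{e,T}$ is singular. This happens for many $T$: every $T$ with $|T|\le 2$, and more generally every $T$ whose coefficient in $P_e$ vanishes.

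The non-negativity itself is plausible. For example, for $e=5$ one finds $n_1=1+a_5+a_3a_5+(1+a_2)(1+a_3)a_4(1+a_5)$. But establishing it for all odd $e$ is exactly the work the proposal leaves undone.

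\textbf{The fallback does not close the gap, and as written it is incorrect.} Column $i$ of $\bbM$ carries $a_i$ in rows $i+1,\dots,i+e'$, so $w_i$ must be the indicator of rows $i+e'+1,\dots,i+2e'$; you gave it the same support as $u_i$. Eliminating $\det\bbM[u_i]$ via $\det\bbM=a_i\det\bbM[u_i]-\det\bbM[\be_i]$ gives $a_in_i=\det\bbM+(a_i+1)\det\bbM[\be_i]+a_i\det\bbM[w_i]$. Already for $e=3$ your right-hand side is $a_1a_2a_3+a_1+a_2a_3$, not $n_1=1+a_3+a_2a_3$.

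Even the corrected identity cannot be handled term by term. The middle term $\det\bbM[\be_i]$ is the evaluation at $\ba$ of $-P_e|_{y_i=0}$, and it is typically negative. For $e=5$, $\det\bbM[\be_2]=1-a_1a_3(a_4+a_5+a_4a_5)<0$.

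\textbf{The paper avoids any coefficient analysis.} Since $\bbM(n_1,\dots,n_e)^\intercal=\det(\bbM){\bf 1}_e$, subtracting consecutive rows gives
\[
(a_i+1)n_i=n_{i+1}+a_{i+e'+1}n_{i+e'+1},
\]
with all coefficients positive. Signs (non-negative or negative) cannot alternate along the odd cycle $i\mapsto i+e'$, so some pair $n_i,n_{i+e'}$ has the same sign. These relations then propagate that sign to every $n_j$. Finally, $\det(\bbM)>0$ (Corollary~\ref{detpos}) rules out $n_1=\dots=n_e=0$. Through the first row, $-n_1+\sum_{j=e'+2}^{e}a_jn_j=\det(\bbM)$, it also rules out the case where all $n_j$ are negative.
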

\begin{proof}
By definition of $(n_1,\ldots,n_e)$ we have $(n_1,\ldots,n_e)^\intercal \bbM = \det(\MM) \cdot {\bf 1}_e$. Let $R_1,\ldots,R_e$ be the rows of $\MM$. Then $(n_1,\ldots,n_e)^\intercal R_i = \det(\MM)$ for every $i=1,\ldots,e$, and by subtracting two consecutive rows we obtain $(n_1,\ldots,n_e)^\intercal (R_i-R_{i+1})=0$ for every $i=1,\ldots,e$, where the indices are taken cyclically. Then
 $$(a_i+1) n_i = n_{i+1}+ a_{i+e'+1} n_{i+e'+1}$$    for all $i$, and the three coefficients we see are all strictly positive by choice of our $a_i$.

For now, sign means one of ``non-negative'' or ``negative''. 
Consider the cycle with vertices $\{1,\ldots,e\}$ and edges given by the law $i \mapsto i+e'$: if $n_i$ and $n_{i+e'}$ always have different sign, consecutive vertices of this cycle will have different sign, which is impossible for an odd cycle. Then there exists an index $i$ such that $n_i$ and $n_{i+e'}$ have the same sign.

Next, we show that if $n_i$ and $n_{i+e'}$ have the same sign, then also $n_{i-1}$ and $n_{i+e'-1}$ will have the same sign. In fact, from the relation $$(a_{i-1}+1)n_{i-1} = n_i + a_{i+e'}n_{i+e'}$$ we deduce that $n_{i-1}$ has the same sign as $n_i$ and $n_{i+e'}$, and from the relation
$$a_{i+e'-1}n_{i+e'-1} = n_{i+e'}+a_{i-1}n_{i-1}$$
we obtain that $n_{i+e'-1}$ also has the same sign as $n_i$ and $n_{i+e'}$.

Therefore, we proved inductively that $n_1,\ldots,n_e$ all have the same sign. If there exists an index $i$ such that $n_i=0$, then from
$$0=(a_i+1)n_i=n_{i+1}+a_{i+e'+1}n_{i+e'+1}$$
we quickly deduce that they must all be zero, contradicting Corollary \ref{detpos}.
Finally, assume by contradiction that $n_1,\ldots,n_e$ are all strictly negative, and assume without loss of generality that $n_1$ is the largest of them. Then from the equality $(n_1,\ldots,n_e)^\intercal R_1 = \det(\MM) $ we deduce
$$\det(\bbM)=-n_1 + \sum_{j=e'+2}^e a_jn_j \le -n_{e'+2} + \sum_{j=e'+2}^e a_jn_j < 0,$$
contradicting Corollary \ref{detpos}.
\end{proof}
\begin{prop}
Let $R_1, \ldots, R_e$ denote the rows of $\MM$, and 
let $\MM'$ be the $(e-1)\times e$ matrix whose rows are 
$R_1-R_2, R_2-R_3, \ldots, R_{e-1}-R_e$.

Then, $n_{i}$ is equal, up to sign, the minor obtained by deleting the $i$-th column from $\MM'$.

In particular, if $\gcd(n_1,\ldots,n_e)=1$ these minors generate the unit ideal $(1) = \Z$.
\end{prop}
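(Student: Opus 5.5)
Write $D=\det(\MM)$ and $\mathbf n=(n_1,\ldots,n_e)^\intercal=D\,\MM^{-1}{\bf 1}_e=\operatorname{adj}(\MM){\bf 1}_e$. The plan is to compare $\mathbf n$ with the vector of signed maximal minors of $\MM'$, by building a change of rows that turns $\MM$ into a matrix whose first $e-1$ rows are the rows of $\MM'$. This uses only the definition of $\mathbf n$.

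\textbf{Step 1 (row operations).} Let $E$ be the $e\times e$ unimodular matrix with $ER_i=R_i-R_{i+1}$ for $i\le e-1$ and $ER_e=R_e$; in other words, $E$ is upper bidiagonal with $1$ on the diagonal and $-1$ on the superdiagonal, so $\det E=1$. Then $E\MM$ has rows $R_1-R_2,\ldots,R_{e-1}-R_e,R_e$, and $\det(E\MM)=D$. Moreover
$$E{\bf 1}_e=\be_e,$$
so the first $e-1$ rows of $E\MM$ are exactly $\MM'$.

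\textbf{Step 2 (identify the kernel).} From $\MM\mathbf n=D{\bf 1}_e$ we get $E\MM\mathbf n=D\be_e$, which gives $\MM'\mathbf n=0$. Now let $\mathbf c=(c_1,\ldots,c_e)$ with $c_i=(-1)^{i}\det(\MM'\text{ with column } i \text{ deleted})$. By Laplace expansion along a row, $\mathbf c$ also satisfies $\MM'\mathbf c=0$.

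\textbf{Step 3 (equality, not just proportionality).} Expanding $\det(E\MM)$ along its last row $R_e$ gives $R_e\cdot\mathbf c=\pm D$, with the sign fixed by the convention $(-1)^{e+i}$. Since $D\neq0$ by Corollary~\ref{detpos}, $\MM'$ has rank $e-1$, so its kernel is one-dimensional and $\mathbf n=\lambda\mathbf c$ for some scalar $\lambda$. Comparing $R_e\cdot\mathbf n=D$ with $R_e\cdot\mathbf c=\pm D$ gives $\lambda=\pm1$. Hence each $n_i$ equals the $i$-th maximal minor of $\MM'$ up to a sign.

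Alternatively, Cramer's rule applied to $E\MM\mathbf n=D\be_e$ yields $n_i=$ the $(e,i)$ cofactor of $E\MM$ directly, which is exactly $\pm$ the minor of $\MM'$ with column $i$ deleted.

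\textbf{Step 4 (unit ideal).} The ideal of $\Z$ generated by these minors equals the ideal generated by the $n_i$, namely $\gcd(n_1,\ldots,n_e)\Z$. This is $\Z$ when the gcd is $1$.

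The only delicate point is the sign and scalar bookkeeping in Step~3, that is, showing the scalar is $\pm1$ rather than some other factor. The Cramer formulation settles this cleanly, because the integer $n_i$ appears directly as a cofactor, so I will present that version.
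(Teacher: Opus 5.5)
Your proof is correct, and it reaches the statement by a different and more economical mechanism than the paper.

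The paper writes $n_i$ as the sum of the cofactors in the $i$-th column of $\MM$. Up to sign, this is the alternating sum $\Delta_1(\MM)-\Delta_2(\MM)+\cdots+\Delta_e(\MM)$ of the minors obtained by deleting column $i$ and row $j$. It then performs the replacements $R_j\mapsto R_j-R_{j+1}$ one at a time. By induction over the intermediate matrices $\MM''_j$, using multilinearity of the determinant in a single row, it shows that this alternating sum telescopes into the one minor of $\MM'$ with column $i$ deleted.

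You instead encode all the row replacements in a single unimodular matrix $E$ and use the observation $E\mathbf{1}_e=\be_e$. This gives $E\MM\,\mathbf n=\det(\MM)\,\be_e$, and Cramer's rule (equivalently, $\operatorname{adj}(\MM)=\operatorname{adj}(E\MM)\,E$) identifies $n_i$ directly with the $(e,i)$ cofactor of $E\MM$.

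What your route buys:
\begin{itemize}
\item It gives the exact sign: $n_i$ is $(-1)^{e+i}$ times the minor of $\MM'$ with column $i$ deleted.
\item It avoids the index bookkeeping of the induction.
\item The adjugate form $\operatorname{adj}(\MM)=\operatorname{adj}(E\MM)\,E$ is a polynomial identity, so it does not even need $\det(\MM)\neq 0$.
\end{itemize}
The paper's argument is the same row-reduction idea carried out by hand.

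Your Steps~2--3 (the kernel and proportionality argument) are also correct. They become redundant once you present the Cramer version, as you intend.
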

\begin{proof}
Fix an index $i$. Since $(n_1, \ldots , n_e)^\intercal = \det(\bbM) \cdot \bbM^{-1}{\bf 1}_e$, $n_i$ is the sum of the entries in the $i$-th row of the adjugate matrix of $\MM$, that is, the sum of the entries in the $i$-th column of the cofactor matrix of $\MM$. 
Given a $e \times e$ matrix, denote by $\Delta_j(\cdot)$ the minor obtained by removing the $i$-th column and the $j$-th row. Then, up to sign, we have 
$$n_i = \Delta_1(\MM) - \Delta_2(\MM) +  \ldots - \Delta_{e-1}(\MM)+ \Delta_e(\MM).$$
For any $j=0,\ldots,e-1$, denote by $\MM''_j$ the square matrix whose rows are $R_k-R_{k+1}$ if $k \le j$, and $R_k$ otherwise (then $\MM''_0=\MM$). The matrices $\MM''_j$ are constructed inductively, 
where the matrix $\MM''_j$ is obtained from the matrix $\MM''_{j-1}$ by replacing the $j$-th row $R_j$ with $R_j-R_{j+1}$, 
thus the matrices $\MM''_j$ are all row-equivalent to $\MM$. We claim that for every $j=0,\ldots,e-1$
$$\Delta_{j+1}(\MM''_j)=\Delta_{j+1}(\MM)-\Delta_j(\MM)+\ldots+ (-1)^j\Delta_1(\MM).$$
Since $\Delta_e(\MM''_{e-1})$ will coincide with the minor of $\MM'$ obtained by deleting the $i$-th column, this claim will conclude the proof. 
The claim is trivial for $j=0$.
Assume then that the claim is true up to $j < e-1$. The matrix $\MM''_{j+1}$ is obtained from $\MM''_j$ by replacing the $j+1$-th row $R_j$ with $R_j-R_{j+1}$: since the determinant is multilinear we obtain
$$\Delta_{j+1}(\MM''_j)=\det 
\begin{pmatrix} 
R'_1-R'_2 \\
\vdots \\
R'_{j-1}-R'_j \\
R'_j-R'_{j+1} \\
R'_{j+2} \\
\vdots \\ 
R'_e
\end{pmatrix} = \det 
\begin{pmatrix} 
R'_1-R'_2 \\
\vdots \\
R'_{j-1}-R'_j \\
R'_j \\
R'_{j+2} \\
\vdots \\ 
R'_e
\end{pmatrix} - \Delta_j(\MM''_{j-1})$$
where $R'_1,\ldots,R'_e$ are obtained the rows of $\MM$ by removing the $i$-th entry.

However, the matrix $$\begin{pmatrix} 
R'_1-R'_2 \\
\vdots \\
R'_j-R'_{j+1} \\
R'_{j+2} \\
\vdots \\ 
R'_e
\end{pmatrix}$$
is row-equivalent to the $(e-1) \times (e-1)$ matrix obtained from $\MM$ by removing the $j+1$-th row and the $i$-th column, thus its determinant will be $\Delta_{j+1}(\MM)$, and
$$\Delta_{j+1}(\MM''_j)=\Delta_{j+1}(\MM)-\Delta_j(\MM''_{j-1})=\Delta_{j+1}(\MM)- (\Delta_j(\MM)-\Delta_{j-1}(\MM) + \ldots + (-1)^j \Delta_1(\MM)),$$
from which the claim follows.
\end{proof}

Let $R_1,\ldots,R_e$ be the rows of $\bbM$, and let $\bw_i := R_{i+1}-R_{i}$, where $i=1,\ldots,e$. From the previous result it follows that the sublattice $L$ of $\Z^e$ generated by $\bw_1, \ldots, \bw_{e-1}$ is saturated with respect to the standard bilinear form, that is, $L = L^{\perp\perp}$.

Observe that $L$ has rank $e-1$: this follows  since $\det(\MM) \ne 0$
and $\bw_1, \ldots, \bw_{e-1}$  are rows of a matrix obtained from $\MM$ by row operations. Moreover, if $\gcd(n_1,\ldots,n_e)=1$, the orthogonal complement $L^\perp$ is generated by the vector $(n_1, \ldots, n_e)^\intercal = \det(\MM) \cdot \MM^{-1}\cdot {\bf 1}_e$. Moreover, $\bw_1+\ldots+\bw_e={\bf 0}$ is the only minimal relation among $\bw_1,\ldots,\bw_e$. In conclusion:
\begin{cor}\label{latticegen}
If $\gcd(n_1,\ldots,n_e)=1$, the lattice $\ker(\Phi_L)$ of relations among $n_1, \ldots, n_e$
is generated by $\bw_1, \ldots, \bw_{e-1}$.
\end{cor}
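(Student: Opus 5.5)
The plan is to prove the two inclusions separately. First, $L \subseteq \ker(\Phi_H)$. By construction $\MM\,(n_1,\ldots,n_e)^\intercal = \det(\MM)\cdot{\bf 1}_e$, so each row satisfies $R_i\cdot(n_1,\ldots,n_e)^\intercal=\det(\MM)$. Hence every difference $\bw_i=R_{i+1}-R_i$ is orthogonal to $(n_1,\ldots,n_e)$, which means $\bw_i\in\ker(\Phi_H)$. Therefore $L\subseteq \ker(\Phi_H)$.

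For the reverse inclusion I will use ranks and saturation. Since $\gcd(n_1,\ldots,n_e)=1$, the lattice $\ker(\Phi_H)=\{\bv\in\Z^e : \bv\cdot \bn=0\}$ is a saturated sublattice of $\Z^e$ of rank $e-1$, where $\bn=(n_1,\ldots,n_e)$. The lattice $L$ also has rank $e-1$, because $\det(\MM)\neq0$ and $\bw_1,\ldots,\bw_{e-1}$ are rows of a matrix row-equivalent to $\MM$, as observed before the corollary. Two lattices of the same rank with $L\subseteq\ker(\Phi_H)$ differ only by a finite index. So it suffices to show that $L$ is saturated, i.e.\ that $\Z^e/L$ is torsion-free.

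Saturation comes from the preceding proposition. The maximal minors of the $(e-1)\times e$ matrix $\MM'$ with rows $\bw_1,\ldots,\bw_{e-1}$ are, up to sign, $n_1,\ldots,n_e$, and these generate the unit ideal. By the elementary divisor theory (Smith normal form), the product of the invariant factors of $\MM'$ equals the gcd of its maximal minors, which is $1$. Hence all invariant factors are $1$, $\Z^e/L\cong\Z$ is torsion-free, and $L$ is saturated. A saturated sublattice contained in another lattice of the same rank must equal it. Indeed, for $\bv\in\ker(\Phi_H)$ some multiple $k\bv$ lies in $L$, and saturation then gives $\bv\in L$. So $L=\ker(\Phi_H)$.

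The only delicate step is the equality of the gcd of maximal minors with the index of $L$ in its saturation. This is standard via Smith normal form and needs no further computation. Finally, $\bw_e=-(\bw_1+\cdots+\bw_{e-1})$, so $\bw_e$ is redundant, and $\bw_1,\ldots,\bw_{e-1}$ generate the lattice.
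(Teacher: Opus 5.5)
Your proof is correct and follows essentially the same route as the paper. The paper combines three facts to get $L=L^{\perp\perp}=(n_1,\ldots,n_e)^{\perp}=\ker(\Phi_H)$: the maximal minors of $\MM'$ are $\pm n_1,\ldots,\pm n_e$, so $L$ is saturated; $\rank L=e-1$; and $L^{\perp}=\Z\,(n_1,\ldots,n_e)$. You make the saturation step explicit via Smith normal form and phrase the conclusion as a finite-index plus torsion-free argument, rather than through double orthogonal complements.
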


The assumption $\gcd(n_1,\ldots,n_e)=1$ is necessary in this result: in fact if $\gcd(n_1,\ldots,n_e) \ne 1$ the previous results do not hold (see Example \ref{gcd hyp} (2)).

Now we
assume $\gcd(n_1,\ldots,n_e)=1$ and
consider the \emph{numerical semigroup} $H$ generated by $n_1, \ldots, n_e$.
\begin{prop}\label{walpha}
    For every $i=1,\ldots,e$, we have $\alpha_i=a_i+1$.
\end{prop}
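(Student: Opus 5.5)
The plan is to prove the two inequalities $\alpha_i\le a_i+1$ and $\alpha_i\ge a_i+1$ separately.

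\emph{Upper bound.} This is immediate from the row differences of $\bbM$. As in the proof that $n_i>0$, subtracting consecutive rows of $\bbM$ gives
\[
(a_i+1)n_i=n_{i+1}+a_{i+e'+1}n_{i+e'+1}.
\]
The indices $i+1$ and $i+e'+1$ are different from $i$ modulo $e$. Hence $(a_i+1)n_i\in\langle n_j : j\neq i\rangle$, and so $\alpha_i\le a_i+1$.

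\emph{Lower bound: setup.} Suppose, for a contradiction, that $a n_i=\sum_{j\ne i} z_j n_j$ with $1\le a\le a_i$ and $z_j\in\N$. Then $\bw=a\be_i-\sum_{j\ne i}z_j\be_j$ lies in $\ker(\Phi_H)$. By Corollary~\ref{latticegen}, $\bw=\sum_{j=1}^e\lambda_j\bw_j$ for some integers $\lambda_j$; this is unique up to adding a common constant to all $\lambda_j$, because $\bw_1+\cdots+\bw_e={\bf 0}$. From the shape of $\bbM$, the vector $\bw_j=R_{j+1}-R_j$ has entry $a_j+1$ in position $j$, entry $-1$ in position $j+1$, entry $-a_{j+e'+1}$ in position $j+e'+1$, and zeros elsewhere, with indices cyclic. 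So the $k$-th coordinate of $\bw$ is
\[
\sigma_k(\bw)=(\lambda_k-\lambda_{k-1})+a_k(\lambda_k-\lambda_{k+e'}),
\]
using $k-e'-1\equiv k+e'$. The hypotheses say $\sigma_k(\bw)\le 0$ for $k\ne i$ and $\sigma_i(\bw)=a\in[1,a_i]$.

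\emph{Lower bound: discrete maximum principle.} This step is the heart of the argument. Let $M=\max_k\lambda_k$ and let $S=\{k:\lambda_k=M\}$. If $k\in S$ and $k\neq i$, then $\sigma_k(\bw)\le 0$ is a sum of two non-negative terms, so both vanish; hence $k-1\in S$ and $k+e'\in S$ (note $a_k\ge 1$).
\begin{itemize}
\item Starting from any element of $S$ and stepping $k\mapsto k-1$, we stay in $S$ until we reach $i$. Therefore $i\in S$, and $S$ is a cyclic interval $\{i,i+1,\ldots,i+s\}$.
\item If $s\ge 1$, apply $k\mapsto k+e'$ to $k=i+s$. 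This forces $i+s+e'\in S$, so the interval grows. Repeating this, $S$ must exhaust all of $\{1,\ldots,e\}$, because $e'$ is more than half the gap needed to wrap around an interval that already has length $\ge 2$. Then all $\lambda_j$ are equal and $\bw={\bf 0}$, contradicting $a\ge1$.
\item Hence $S=\{i\}$. In particular $\lambda_i>\lambda_{i-1}$ and $\lambda_i>\lambda_{i+e'}$, so $\sigma_i(\bw)\ge 1+a_i>a$. This contradicts $\sigma_i(\bw)=a$.
\end{itemize}
Therefore $\alpha_i\ge a_i+1$, and combined with the upper bound, $\alpha_i=a_i+1$.

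The step I expect to need the most care is the closure argument showing that an interval $S$ of length at least $2$ is forced to be everything. Concretely, one must check with the cyclic bookkeeping that repeatedly applying $k\mapsto k+e'$ to the largest element $k\neq i$ of the interval, followed by downward closure, eventually wraps around the cycle. This uses that $e=2e'+1$: the step $e'$ exceeds the complementary gap as soon as $|S|\ge 2$.
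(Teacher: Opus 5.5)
Your argument is correct and is essentially the paper's proof. Like the paper, you write $\bw=\sum_j\lambda_j\bw_j$, use $\sigma_k(\bw)=(\lambda_k-\lambda_{k-1})+a_k(\lambda_k-\lambda_{k-e'-1})$, and run a maximum principle on the $\lambda_j$. Organizing this through the set $S$ of maximizers is cleaner than the paper's wording, and stating the bound $\alpha_i\le a_i+1$ explicitly is good, since the paper leaves it implicit.

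One correction is needed in the closure step you flagged. Jumping from the \emph{largest} element of $S$ can stall. For example, with $e=5$ and $S\supseteq\{i,i+1\}$, the jump from $i+1$ gives $i+3\in S$. But then $i+3+e'\equiv i$, which adds nothing. For the same reason, your remark that $e'$ exceeds the complementary gap once $|S|\ge 2$ is false when $e'\ge 2$.

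The fix is short. Since $i+t\in S$ for every $1\le t\le s$, take $t=\min(s,e')$. Then $i+t+e'\in S$ with $t+e'\le 2e'$, which forces $s\ge t+e'$. This rules out $s\le e'$, so $t=e'$ and $s=2e'$, that is, $S=\{1,\ldots,e\}$.
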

\begin{proof}
From the description of the matrix we compute for every $i=1,\ldots,e$ the vectors $\bw_i = (a_i+1)\be_i-\be_{i+1}-a_{i+e'+1}\be_{i+e'+1}$. Fix and index $i$, let $1 \le b_i \le a_i$ be an integer, and assume that  there exists a factorization $b_in_i = c_1n_1+\ldots+c_en_e$. This relation can be associated to a vector $\bw \in \ker(\Phi_L)$ such that $\bw^+=b_i\be_i$. Since $\bw \in \ker(\Phi_L)$ we can write $\bw = \lambda_1\bw_1+\ldots+\lambda_{e-1}\bw_{e-1}$. Up to adding $\bw_1+\ldots+\bw_e = {\bf 0}$, we can write $\bw = \lambda_1\bw_1+\ldots+\lambda_e\bw_e$, where $\lambda_1,\ldots,\lambda_e$ are positive integers.

For every $j = 1,\ldots,e$, the $j$-th coordinate of $\bw$ is $$\sigma_j(\bw)=\lambda_j(a_j+1)-\lambda_{j-1}-\lambda_{j-e'-1}a_j = a_j(\lambda_j-\lambda_{j-e'-1})+(\lambda_j-\lambda_{j-1}).$$

Since $\bw^+=b_i\be_i$ we have the following two cases, depending on the index $j$:
\begin{enumerate}
\item If $j \ne i$, then $a_j(\lambda_j-\lambda_{j-e'-1})+(\lambda_j-\lambda_{j-1}) \le 0$, implying one of the following two statements:
\begin{enumerate}
    \item $\lambda_j < \lambda_{j-e'-1}$ or $\lambda_j < \lambda_{j-1}$,
    \item $\lambda_j=\lambda_{j-1}=\lambda_{j-e'-1}$.
\end{enumerate}
\item If $j = i$, then $a_i(\lambda_i-\lambda_{i-e'-1})+(\lambda_i-\lambda_{i-1}) = b_i \le a_i$, and thus $\lambda_i \le \lambda_{i-e'-1}$ or $\lambda_i \le \lambda_{i-1}$.
\end{enumerate}

Let $k$ be an index such that $\lambda_k$ is maximal. From (2), we can assume $k \ne i$. Then in particular we must have $\lambda_k = \lambda_{k-1}=\lambda_{k-e'-1}$ from (1). Repeating this argument we obtain $\lambda_j = \lambda_{j-1}=\lambda_{j-e'-1}$ for every index $j$ (observe that when $j=i$, since $\lambda_i$ will be a maximum, the statement (2) still forces $\lambda_i=\lambda_{i-e'-1}$ or $\lambda_i=\lambda_{i-1}$). Then $\lambda_1=\ldots=\lambda_e$, and 
$$\bw = \lambda_1\bw_1+\ldots+\lambda_e\bw_e = \lambda_1(\bw_1+\ldots+\bw_e)={\bf 0},$$
contradicting $\bw^+=b_i\be_i$.
\end{proof}

\begin{cor}
The semigroup $H$ has embedding dimension $e$.
\end{cor}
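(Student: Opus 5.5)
The plan is to argue by contradiction: suppose that some $n_i$ is not a minimal generator. Then $n_i$ lies in the submonoid generated by the remaining $n_j$, that is, $n_i = \sum_{j\ne i} c_j n_j$ with $c_j \in \N$. In the language of Definition~\ref{DefCriticalBinomial}, this says that $1\cdot n_i \in \langle n_1,\ldots,n_{i-1},n_{i+1},\ldots,n_e\rangle$, and hence $\alpha_i = 1$.

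By Proposition~\ref{walpha}, however, $\alpha_i = a_i+1 \ge 2$, because the $a_i$ are positive integers. This is the desired contradiction. In fact, the proof of Proposition~\ref{walpha} already excludes any factorization $b_i n_i = \sum_j c_j n_j$ with $1\le b_i\le a_i$ and $c_i = 0$, and the case $b_i = 1$ is exactly the non-minimality of $n_i$.

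Two further degenerate possibilities must be ruled out for $\{n_1,\ldots,n_e\}$ to have cardinality $e$.

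\emph{Coincidences among the $n_i$.} If $n_i = n_j$ for some $i \ne j$, then $1\cdot n_i$ lies in the submonoid generated by the other elements, so again $\alpha_i = 1$, which is again excluded.

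\emph{Trivial semigroup.} If $H = \N$, then some $n_i$ equals $1$. Since $e \ge 3$, every other $n_j \ge 1$ is then a multiple of $n_i = 1$, so $n_j$ is not minimal, which is also excluded by the argument above.

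Positivity of all $n_i$ was established in the preceding proposition, and $H$ is a numerical semigroup because of the gcd hypothesis. Together these show that $n_1,\ldots,n_e$ are $e$ distinct minimal generators, so the embedding dimension of $H$ is $e$.

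There is no real obstacle here; the only point requiring care is to confirm that the definition of $\alpha_i$ as a minimum over $a\in\Z_{>0}$ covers the case $a=1$, which it does.
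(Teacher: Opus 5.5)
Your proposal is correct and follows the paper's own argument: the paper proves this corollary in one line, by noting that $\alpha_i = a_i+1 \ge 2$ for every $i$ by Proposition~\ref{walpha}, so no $n_i$ lies in the submonoid generated by the others. Your separate checks of the cases $n_i=n_j$ and $H=\N$ are already covered by that single observation, so they are redundant but harmless.
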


\begin{proof}
It follows  since $\alpha_i = a_i+1 \ge 2$ for all $i$.
\end{proof}

\begin{prop}\label{lattice membership}
Let ${\bf 0} \ne \bw \in \ker(\Phi_L)$. Then:
\begin{enumerate}
\item There exist two indices $k_1,k_2$ such that $\sigma_{k_1}(\bw^+) \ge a_{k_1}$ and $\sigma_{k_2}(\bw^-) \ge a_{k_2}$.
\item There exists an index $k$ such that $\sigma_k(\bw^+) \ge a_k+1$ or $\sigma_k(\bw^-) \ge a_k+1$. 
\end{enumerate}

\end{prop}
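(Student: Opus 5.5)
We will argue exactly as in the proof of Proposition \ref{walpha}, writing $\bw$ as a positive combination of the generators $\bw_1,\ldots,\bw_e$ of the lattice. By Corollary \ref{latticegen} and the relation $\bw_1+\cdots+\bw_e={\bf 0}$, any ${\bf 0}\ne\bw\in\ker(\Phi_L)$ can be written as $\bw=\sum_{j=1}^e\lambda_j\bw_j$. The integers $\lambda_j$ are determined up to adding a common constant, and not all of them are equal. The $j$-th coordinate is
\[
\sigma_j(\bw)=a_j(\lambda_j-\lambda_{j-e'-1})+(\lambda_j-\lambda_{j-1}).
\]
For part (1) it suffices to produce $k_1$ with $\sigma_{k_1}(\bw)\ge a_{k_1}$. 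Replacing $\bw$ by $-\bw$, which negates all $\lambda_j$, then gives $k_2$ with $\sigma_{k_2}(\bw^-)\ge a_{k_2}$.

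To find $k_1$, let $\mu=\max_j\lambda_j$ and let $T=\{j:\lambda_j=\mu\}$. Since the $\lambda_j$ are not all equal, $T$ is a proper nonempty subset. For $j\in T$ both differences $\lambda_j-\lambda_{j-e'-1}$ and $\lambda_j-\lambda_{j-1}$ are non-negative. Hence $\sigma_j(\bw)\ge a_j$ whenever $j-e'-1\notin T$, and $\sigma_j(\bw)\ge 1$ whenever $j-1\notin T$.

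The first case can always be arranged. Because $\gcd(e'+1,e)=1$, the map $j\mapsto j-e'-1$ is a single $e$-cycle on $\Z/e\Z$. A proper nonempty subset $T$ therefore contains some $j$ with $j-e'-1\notin T$. For this $j$ we get $\sigma_j(\bw)\ge a_j$, which gives (1).

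For (2) we need a strict improvement, namely some $j\in T$ with $j-e'-1\notin T$ and $j-1\notin T$ simultaneously. Then $\sigma_j(\bw)\ge a_j+1$, or the symmetric statement holds for the minimum set. This is the main obstacle, since $T$ might be arranged so that every element failing the $(e'+1)$-shift condition has its predecessor $j-1$ in $T$.

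My first attempt is a refined extremal argument. Among the $j\in T$ with $j-e'-1\notin T$, I would try to show that some $j$ also has $j-1\notin T$. Suppose not. Then $j\in T$ and $j-e'-1\notin T$ force $j-1\in T$. Writing $j-e'-1=(j-1)-e'$, this says the condition $j-e'-1\notin T$ propagates backwards along $j\mapsto j-1$. I expect this to yield $T=$ everything, a contradiction, after running around the $e$-cycle generated by $e'$ using the oddness of $e$ (the same parity trick as in the positivity proof for $n_i$).

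If this fails, the fallback is to use Corollary \ref{ideal membership}, or Proposition \ref{walpha} directly. Suppose all $\sigma_k(\bw^\pm)\le a_k$. Then, following the chain argument of Lemma \ref{LemChainSum} with the vectors $\bw_i$, which have $\bw_i^+=(a_i+1)\be_i$, the factorizations $\bw^+$ and $\bw^-$ would admit no move, so they cannot be connected. I would then try to derive a contradiction with Corollary \ref{latticegen} by a minimal-counterexample argument on $\sum_j(\lambda_j-\min\lambda)$.
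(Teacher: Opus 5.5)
Your part (1) is correct and is essentially the paper's argument: start from a maximal $\lambda_k$ and step along $j\mapsto j-e'-1$, which is a single $e$-cycle, until the $(e'+1)$-predecessor is strictly smaller.

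\textbf{The gap is in part (2).} Your reduction is sound: it suffices to find $j$ in the maximum set $T$, or in the minimum set $T'$, whose predecessors $j-1$ and $j-e'-1$ both lie outside that set. But your ``refined extremal argument'' then tries to find such a $j$ in $T$ alone, and that is false. Take $e=3$ and $\lambda=(1,1,0)$. Then $\bw=\bw_1+\bw_2=(1,\,a_2,\,-a_3-1)$ and $T=\{1,2\}$. No $j\in T$ has $\sigma_j(\bw)\ge a_j+1$, and (2) holds only through $\sigma_3(\bw^-)=a_3+1$. More generally, every $T$ whose complement is a single index fails. So no propagation along $T$ can reach ``$T=$ everything''; the upper and lower bounds must be used together.

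The fallback does not close the gap either.
\begin{itemize}
\item Lemma~\ref{LemChainSum} and Corollary~\ref{ideal membership} require $I_H$ to be generated by the $F(\bw_i)$. That is Proposition~\ref{defidealw_i}, which is proved \emph{from} the present proposition, so the argument is circular.
\item Corollary~\ref{latticegen} only says $\bw$ is an integer combination of the $\bw_i$. It says nothing about $\bw^+$ and $\bw^-$ being joined by moves that stay in $\N^e$.
\item Proposition~\ref{walpha} only treats $\bw$ with $\bw^+$ supported on one coordinate.
\end{itemize}

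\textbf{The paper's argument combines both bounds.} Assume $-a_j\le\sigma_j(\bw)\le a_j$ for all $j$, and take $k$ as in part (1), so $\lambda_k$ is maximal and $\lambda_k>\lambda_{k-e'-1}$.
\begin{itemize}
\item The upper bound at $k$ forces $\lambda_{k-1}=\lambda_k>\lambda_{k-e'-1}$.
\item Since $k-2e'-2\equiv k-1$, we have $\sigma_{k-e'-1}(\bw)=a_{k-e'-1}(\lambda_{k-e'-1}-\lambda_{k-1})+(\lambda_{k-e'-1}-\lambda_{k-e'-2})$. The lower bound there forces $\lambda_{k-e'-2}\le\lambda_{k-e'-1}<\lambda_{k-1}$.
\item So $k-1$ is again a maximal index whose $(e'+1)$-predecessor is strictly smaller.
\end{itemize}
Iterating around the cycle makes all $\lambda_j$ equal to the maximum, contradicting $\lambda_k>\lambda_{k-e'-1}$.

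Alternatively, your $T/T'$ reduction can be finished by a counting argument. Suppose every $j\in T$ has $j-1\in T$ or $j-e'-1\in T$. Then $T$ contains a directed cycle made of $a$ steps $-(e'+1)$ and $b$ steps $-1$. Since $2(e'+1)\equiv 1 \pmod e$, this gives $a+2b\equiv 0\pmod e$, hence $a+2b\ge 2e'+1$, so $a+b\ge e'+1$ and $|T|\ge e'+1$. The same holds for $T'$. But $T\cap T'=\emptyset$ and $|T|+|T'|\le 2e'+1$, so one of $T,T'$ must contain the required $j$.
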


\begin{proof}
By Corollary \ref{latticegen} the lattice $\ker(\Phi_L)$ is generated by $\bw_1,\ldots,\bw_{e-1}$. Since $\bw_1+\ldots+\bw_e={\bf 0}$ we can write
$$\bw=\lambda_1\bw_1+\ldots+\lambda_e\bw_e,$$

where $\lambda_1,\ldots,\lambda_e$ are positive integers.

Fix an index $j$. Then
$$\sigma_j(\bw)=a_j(\lambda_j-\lambda_{j-e'-1})+(\lambda_j-\lambda_{j-1}).$$

Let $k$ be such that $\lambda_k$ is maximum among all $\lambda_j$. If $\lambda_k=\lambda_{k-e'-1}$, we replace the index $k$ with $k-e'-1$, and so on. Since $\bw \ne 0$, not all the $\lambda_j$ are equal, and moreover, since $\gcd(e,e'+1)=1$, we do not go back to the same index before covering all of them: therefore, we find an index $k$ such that $\lambda_k$ is the maximum among all $\lambda_j$, and $\lambda_k \ne \lambda_{k-e'-1}$. Then consider the two equations
\begin{eqnarray}
\sigma_k(\bw) & = &a_k(\lambda_k-\lambda_{k-e'-1})+(\lambda_k-\lambda_{k-1}) \\
\sigma_{k-e'-1}(\bw) & = & a_{k-e'-1}(\lambda_{k-e'-1}-\lambda_{k-1})+(\lambda_{k-e'-1}-\lambda_{k-e'-2})
\end{eqnarray}
From the first equation we already deduce the first statement, since $\lambda_k > \lambda_{k-e'-1}$ and $\lambda_k \ge \lambda_{k-1}$ (applying the same argument for $-\bw$ yields the second index).
Assume that our second statement is false, that is, for every index $j$ we have $\sigma_j(\bw^+) \le a_j$ and $\sigma_j(\bw^-)\le a_j$. Since $\lambda_k > \lambda_{k-e'-1}$, from the first equation we have $\lambda_k = \lambda_{k-1} > \lambda_{k-e'-1}$, and then in order to have $\sigma_{k-e'-1}(\bw) > -a_{k-e'-1}-1$ we must have $\lambda_{k-e'-2} \le \lambda_{k-e'-1} < \lambda_{k-1}$.

Repeating this argument we obtain a chain of equalities $\lambda_k=\lambda_{k-1}=\ldots=\lambda_{k-j}$ and a chain of inequalities $\lambda_k > \lambda_{k-e'-1} \ge \ldots \ge \lambda_{k-e'-j}$ for every $j$, which contradict each other.
\end{proof}

\begin{prop}\label{defidealw_i}
The defining ideal $I_H$ is generated by $F(\bw_i)$. In particular, $I_H$ is critical and an almost complete intersection.
\end{prop}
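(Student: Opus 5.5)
We will prove that the binomials $F(\bw_1),\ldots,F(\bw_e)$ generate $I_H$, where $\bw_i=(a_i+1)\be_i-\be_{i+1}-a_{i+e'+1}\be_{i+e'+1}$. By Proposition \ref{walpha}, each $F(\bw_i)=x_i^{\alpha_i}-x_{i+1}x_{i+e'+1}^{a_{i+e'+1}}$ is a critical binomial for $x_i$. So once generation is known, $I_H$ is critical, hence $\mu(I_H)\le e$. By Kunz's theorem, or by the later pseudo-symmetric conclusion, $H$ is not a complete intersection. More directly: $H$ is not symmetric, and complete intersections are symmetric; alternatively, a complete intersection would be a gluing. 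Hence $\mu(I_H)=e$.

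Let $J=(F(\bw_1),\ldots,F(\bw_e))\subseteq I_H$. Since $I_H$ is spanned by binomials $\bx^{\bu}-\bx^{\bu'}$ with $\Phi_H(\bu)=\Phi_H(\bu')$, it suffices to show that any two factorizations $\bu,\bu'$ of the same $h\in H$ are connected by a chain of factorizations of $h$, each step differing by $\pm\bw_i$. We argue by induction on $h$ with respect to $\le_H$, or simply on $h$. If $\bu$ and $\bu'$ share a variable ($\min(u_k,u'_k)>0$ for some $k$), we cancel $x_k$ and apply induction to $h-n_k$, then multiply the chain back by $x_k$. So we may assume $\supp\bu\cap\supp\bu'=\emptyset$, i.e.\ $\bw=\bu-\bu'$ has $\bw^+=\bu$ and $\bw^-=\bu'$.

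Now apply Proposition \ref{lattice membership}(2). There is an index $k$ with $\sigma_k(\bw^+)\ge a_k+1=\alpha_k$ or $\sigma_k(\bw^-)\ge\alpha_k$; by symmetry, say $u_k\ge\alpha_k$. Then $\bu-\bw_k=\bu-\alpha_k\be_k+\be_{k+1}+a_{k+e'+1}\be_{k+e'+1}\in\N^e$ is another factorization of $h$, reached from $\bu$ by one move of $J$. The key point is that this new factorization $\bu''$ has $u''_{k+1}\ge1$. If $u'_{k+1}>0$, then $\bu''$ and $\bu'$ share $x_{k+1}$, and we finish by the cancellation step and induction. The same holds if $u'_{k+e'+1}>0$, since $a_{k+e'+1}\ge1$.

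Otherwise $\bu''-\bu'$ again has disjoint supports but is ``closer'' to $\bu'$, and we need a terminating measure; this is the main obstacle. Our first attempt is a potential argument using the coefficients $\lambda_j$ in $\bw=\sum\lambda_j\bw_j$, normalized so that $\min_j\lambda_j=0$. Replacing $\bu$ by $\bu-\bw_k$ decreases $\lambda_k$ by one, so $\sum_j\lambda_j$ (after renormalizing) strictly decreases as long as we always move at an index $k$ of positive coefficient. We must check that the index produced by Proposition \ref{lattice membership} can be taken with $\lambda_k$ maximal, hence positive. This follows from its proof, which locates $k$ with $\lambda_k$ maximal and $\lambda_k>\lambda_{k-e'-1}$. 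When the excess is instead on $\bw^-$, we move from the $\bu'$ side symmetrically, using an index of minimal $\lambda$. Since $\sum\lambda_j\ge0$, after finitely many moves either the supports overlap (and induction applies) or $\lambda$ becomes constant, i.e.\ $\bw=\mathbf 0$ and the two factorizations coincide. This yields $I_H=J$.
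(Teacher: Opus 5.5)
Your reduction (induction on $h$, cancelling a common variable, and otherwise moving by some $\bw_k$ supplied by Proposition~\ref{lattice membership}(2)) is the same as the paper's. The gap is the termination step, which you rightly flag as the crux: your potential does not decrease.

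Write $\mathbf u-\mathbf u'=\sum_j\lambda_j\bw_j$, normalized by $\min_j\lambda_j=0$. A $\mathbf u$-side move at a maximal index lowers $\sum_j\lambda_j$ by one. A $\mathbf u'$-side move at a minimal index $k$, however, turns the difference into $\mathbf u-\mathbf u'+\bw_k$, so it \emph{raises} $\lambda_k$. If the minimum is also attained at another index, your normalized sum increases.

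This happens within your rules and without creating an overlap. Take $e=5$, $\mathbf u=\be_2+(a_4+1)\be_4$ and $\mathbf u'=\be_3+(a_5+1)\be_5$, so that $\mathbf u-\mathbf u'=\bw_2+\bw_4$ and $\lambda=(0,1,0,1,0)$. Then $\sigma_5(\mathbf u')=\alpha_5$, and $5$ is a minimal index. The move gives $\mathbf u'-\bw_5=\be_1+(a_3+1)\be_3$, which is still support-disjoint from $\mathbf u$. The new $\lambda$ is $(0,1,0,1,1)$, so the sum goes from $2$ to $3$. The mirror normalization $\sum_j(\max\lambda-\lambda_j)$ fails the same way for $\mathbf u$-side moves at a non-unique maximum, so neither one controls interleaved moves.

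Separately, extremality of the index does not follow from the proof of Proposition~\ref{lattice membership}(2). That proof argues by contradiction and uses the bounds at the non-extremal indices $k-e'-1,k-e'-2,\dots$. The claim is true (by a parity argument along the odd cycle $k\mapsto k+e'$, using $k-1=k+2e'$), but it needs its own proof.

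The paper uses no potential. It keeps both whole chains $\mathbf u=\mathbf z_0\to\mathbf z_1\to\cdots$ and $\mathbf u'=\mathbf z'_0\to\mathbf z'_1\to\cdots$. If \emph{any} $\mathbf z_p$ shares a variable with \emph{any} $\mathbf z'_q$, not only the current pair, then cancelling that variable and using the lower-degree case already connects $\mathbf u$ to $\mathbf u'$. So one may assume every $\mathbf z_p$ is support-disjoint from every $\mathbf z'_q$. The argument then runs as follows:
\begin{enumerate}
\item A fixed $k_0\in\supp(\mathbf u')$ lies in no $\supp(\mathbf z_p)$, so $\bw_{k_0}$ is never subtracted on the $\mathbf u$-side.
\item All relations among the $\bw_i$ are multiples of $\bw_1+\cdots+\bw_e=\mathbf 0$, so the $\mathbf z_p$ are pairwise distinct factorizations of $h$, hence finitely many. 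The same holds for the $\mathbf z'_q$.
\item Each step extends one of two finite chains, so the process stops, and it can only stop when the two chains meet.
\end{enumerate}
With this in place of your last paragraph, your argument is complete.

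A minor point on the second claim: Kunz's theorem does not rule out a complete intersection. Your ``not symmetric'' route is fine, since Propositions~\ref{detnot} and~\ref{2detnot} use only the generation statement. More directly, $F(\bw_j)$ is the only $F(\bw_i)$ containing a pure power of $x_j$, so by Remark~\ref{RemBasicObservationsCritical} none of them can be omitted.
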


\begin{proof}
Obviously $(F(\bw_1),\ldots,F(\bw_e)) \subseteq I_H$. 
Assume that $(F(\bw_1),\ldots,F(\bw_e)) \neq I_H$, and let $F(\bw) \in I_H \setminus (F(\bw_1),\ldots,F(\bw_e))$ be a binomial of minimal degree. 
Then $\bw = \bw^+-\bw^- \in \ker(\Phi_H)$, and since $F(\bw)$ has minimal degree, $\bw^+$ and $\bw^-$ have disjoint support.

We show that there exists a chain
$$\bw^+ = {\bf z}_0 \rightarrow {\bf z}_1 \rightarrow \cdots \rightarrow {\bf z}_r = \bw^-$$
such that, for every $j=1,\ldots,r$, we have ${\bf z}_j \in \N^e$ and ${\bf z}_j - {\bf z}_{j-1}= \pm \bw_{i_j}$, for some $i_j$. 
This will imply our statement.

We construct this chain algorithmically. 
Let ${\bf z}_0=\bw^+$, and ${\bf z'}_0 = \bw^-$. 

At each step, consider a couple of factorizations $({\bf z}_i,{\bf z'}_j)$, 
starting with $({\bf z}_0,{\bf z'}_0)$. Let ${\bf z}={\bf z}_i-{\bf z'}_j \in \ker(\Phi_H)$. 
If ${\bf z} \ne {\bf 0}$, by Proposition \ref{lattice membership} there exists an index $k$ such that $\sigma_k({\bf z}_i)=\sigma_k({\bf z}^+) \ge a_k+1$ or $\sigma_k({\bf z'}_j)=\sigma_k({\bf z}^-) \ge a_k+1$. We make two cases:
\begin{enumerate}
    \item If $\sigma_k({\bf z}_i) \ge a_k+1$, consider ${\bf z}_{i+1}={\bf z}_i - \bw_k \in \N^e$ and continue the algorithm with the couple $({\bf z}_{i+1},{\bf z'}_j)$

    \item If $\sigma_k({\bf z'}_j) \ge a_k+1$, consider ${\bf z'}_{j+1}={\bf z}'_j - \bw_k \in \N^e$ and continue the algorithm with the couple $({\bf z}_i,{\bf z'}_{j+1})$.
\end{enumerate}
This way, obtain two chains 
\begin{align*}
\bw^+ &= {\bf z}_0 \rightarrow {\bf z}_1 \rightarrow \cdots  \\
\bw^- &= {\bf z'}_0 \rightarrow {\bf z'}_1 \rightarrow \cdots
\end{align*}
such that for every $i,j$, ${\bf z}_i,{\bf z'}_j$ are factorizations of $h \in H$, ${\bf z}_i - {\bf z}_{i-1}= - \bw_{j_i}$ and ${\bf z'}_j - {\bf z'}_{j-1} =  - \bw_{i_j}$.

We prove that this algorithm must halt, that is, ${\bf z}_i = {\bf z'}_j$ for some $i,j$:
then we can join the two chains at ${\bf z}_i = {\bf z'}_j$, invert the second one, and obtain the desired chain.

Let $(p,q)$ be any couple of indices. Then $F({\bf z}_p-{\bf z}'_q) \in I_H$, and since ${\bf z}_p$ and ${\bf z}'_q$ are obtained from ${\bf z}_0=\bw^+$ and ${\bf z}'_0=\bw^-$ 
by applying trades involving elements in $\{\bw_1,\ldots,\bw_e\}$, we have $\deg F({\bf z}_p-{\bf z}'_q) = \deg F(\bw)$ and $F({\bf z}_p-{\bf z'}_q)-F(\bw^+-\bw^-) \in (F(\bw_1),\ldots,F(\bw_e))$. 
If $({\bf z}_p,{\bf z}'_q)$ do not have disjoint support, the binomial $F({\bf z}_p-{\bf z'}_q)$ can be written as a combination of binomials $F'_1,\ldots,F'_r \in I_H$ of smaller degree. Since these binomials of $I_H$ will have degree smaller than $\deg F(\bw)$, we have $(F'_1,\ldots,F'_r) \subseteq (F(\bw_1),\ldots,F(\bw_e))$ by the minimality assumption: then $F({\bf z}_p-{\bf z}'_q) \in (F(\bw_1),\ldots,F(\bw_e))$, and thus $F(\bw^+-\bw^-) \in (F(\bw_1),\ldots,F(\bw_e))$, contradiction.

Then for every couple of indices $(p,q)$, the vectors ${\bf z}_p$ and ${\bf z}'_q$ must have disjoint support. In particular, this implies that there must be an index $k$ not contained in the support of the elements ${\bf z}_0,{\bf z}_1,\ldots$, that is, we never subtract the vector $\bw_k$ in the chain ${\bf z}_0,{\bf z}_1, \ldots$.
Since $\bw_1+\ldots+\bw_e={\bf 0}$ is the only minimal relation among $\bw_1,\ldots,\bw_e$, we deduce that the elements of the first chain are all distinct factorizations of $h \in H$, and thus this chain is finite. With a similar argument, we also deduce that the chain ${\bf z}'_0,{\bf z}'_1,\ldots$ must be finite. Therefore, the algorithm must halt. 
\end{proof}

\begin{prop}\label{detnot}
 We have   $\det(\MM)\in \PF(H)$.
\end{prop}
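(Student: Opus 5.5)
Write $D=\det(\MM)$. Two things must be checked: $D\notin H$, and $D+n_i\in H$ for every $i$.

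\textbf{$D+n_i\in H$.} This is the easy part. By construction $\MM (n_1,\ldots,n_e)^\intercal = D\,{\bf 1}_e$, so the $i$-th row gives
\[
\sum_{j} m_{ij}n_j = D,
\]
with $m_{ii}=-1$ and $m_{ij}\ge 0$ for $j\ne i$. Hence
\[
D+n_i=\sum_{j\ne i} m_{ij}n_j\in H,
\]
which is exactly the statement that $\MM$ would be an RF-matrix for $D$ once $D\notin H$ is known.

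\textbf{$D\notin H$.} This is the main step; I would argue by contradiction. Suppose $D=\sum_j c_jn_j$ with ${\bf c}\in\N^e$. Take the first row $R_1=-\be_1+\sum_{j=e'+2}^{e}a_j\be_j$. Then $\bw=R_1-{\bf c}\in\ker(\Phi_H)$ and $\bw\ne{\bf 0}$, since $\sigma_1(\bw)<0$ while $R_1$ has a $-1$.

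Now I want to use Proposition \ref{lattice membership}(1). It gives an index $k_1$ with $\sigma_{k_1}(\bw^+)\ge a_{k_1}$. Note that $\bw^+\le R_1^+=\sum_{j\ge e'+2}a_j\be_j$, so this only forces $\sigma_{k}(\bw^+)=a_k$ with $c_k=0$ for some $k\ge e'+2$. That alone is not a contradiction, so I need a finer argument.

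Write $\bw=\sum\lambda_j\bw_j$ with $\lambda_j\in\Z_{\geq 0}$ normalized so that $\min\lambda_j=0$. Then
\[
\sigma_j(\bw)=a_j(\lambda_j-\lambda_{j-e'-1})+(\lambda_j-\lambda_{j-1}).
\]
The coordinate constraints are:
\begin{itemize}
\item[] $\sigma_j(\bw)\le a_j$ for $j\ge e'+2$;
\item[] $\sigma_j(\bw)\le 0$ for $2\le j\le e'+1$;
\item[] $\sigma_1(\bw)\le -1$.
\end{itemize}
Take $k$ with $\lambda_k$ maximal. Mimic the argument of Proposition \ref{walpha}: at a maximal index with $\sigma_k\le 0$, all neighbours must be equal, and this propagates along the cycle $j\mapsto j-1$ and $j\mapsto j-e'-1$. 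Where $\sigma_k\le a_k$, we instead get either $\lambda_{k-e'-1}=\lambda_k$, or $\lambda_{k-e'-1}=\lambda_k-1$ with $\lambda_{k-1}=\lambda_k$. So the $\lambda$'s take only two values, $M$ and $M-1$.

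Next, track the set $T=\{j:\lambda_j=M\}$. It is closed under $j\mapsto j-1$ except possibly at $j=1$, where $\sigma_1\le -1$ requires a strict drop. This should force $T$ to be an interval $\{t,\ldots,e\}$ starting after $1$. The $e'+1$ shift conditions then pin down $T$ and show that $\sigma_1(\bw)\ge 0$, contradicting $\sigma_1(\bw)=-1-c_1<0$.

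I expect this combinatorial case analysis to be the main obstacle. A cleaner alternative I would try first is a degree argument. By Proposition \ref{defidealw_i}, $I_H$ is generated by the $F(\bw_i)$. One can also compute $D$ as $\tfrac12$ of the Apéry maximum via $\sum_i\alpha_in_i$-type identities. Summing the rows of the adjugate relation gives $e\cdot D=\sum_i\sum_{j\ne i}m_{ij}n_j-\sum n_i$, which I would try to compare with a bound on the largest element of $\Ap(H,n_1)$ obtained from the box $\prod[0,\alpha_j-1]$ structure that the critical generators give.

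If neither route closes, the fallback is the first one: reduce ${\bf c}$ modulo the $\bw_i$, as in Proposition \ref{defidealw_i}, so that ${\bf c}$ is a factorization with $c_j\le a_j$ and $c_1=0$. Then derive from Proposition \ref{lattice membership}(2), applied to $R_1+\be_1-({\bf c}+\be_1)$ (the difference of two factorizations of $D+n_1$ with all entries bounded by $a_j$), that they must coincide. That forces $c_1=-1$, a contradiction.
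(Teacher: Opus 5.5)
Your check that $D+n_i\in H$ is fine. The substantive half, $D\notin H$, is not established by any of your three routes.

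\textbf{The $\lambda$-analysis.} The decisive step is left as ``this should force''. The intermediate claims also do not follow from the local analysis you describe. At a maximal index $k\ge e'+2$ with $\lambda_{k-e'-1}=\lambda_k$, the constraint $\sigma_k(\bw)\le a_k$ only gives $\lambda_k-\lambda_{k-1}\le a_k$. So neither ``the $\lambda$'s take only two values'' nor ``$T$ is closed under $j\mapsto j-1$'' is justified.

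\textbf{The degree argument.} This is only a plan.

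\textbf{The fallback.} This is circular. The descent ``subtract $\bw_k$ whenever $c_k\ge a_k+1$'' terminates in Lemma~\ref{factorapery} only because the element lies in $\Ap(H,n_i)$. There its $i$-th coordinate stays $0$, so $\bw_i$ is never used, whereas a cycle must use every $\bw_j$. For all large elements of $H$ the descent necessarily cycles, since only finitely many elements of $H$ admit a factorization bounded by $(a_1,\dots,a_e)$. Knowing $D+n_1\in\Ap(H,n_1)$ is exactly the claim $D\notin H$. You also give no reason for $c_1=0$. With merely $c_j\le a_j$ for all $j$, Proposition~\ref{lattice membership}(2) gives no contradiction, because $\sigma_1((R_1-{\bf c})^-)=1+c_1$ can equal $a_1+1$.

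\textbf{The missing idea (the paper's proof).} The vector ${\bf a}=R_1+\be_1=\sum_{j=e'+2}^{e}a_j\be_j$ is an isolated factorization of $D+n_1$. Indeed, ${\bf a}-\bw_i\notin\N^e$ because of the $i$-th coordinate. Also ${\bf a}+\bw_i\notin\N^e$, because coordinates $i+1$ and $i+e'+1$ cannot both lie in $\{e'+2,\dots,e\}$. Since $I_H=(F(\bw_1),\ldots,F(\bw_e))$ by Proposition~\ref{defidealw_i}, any two factorizations of $D+n_1$ are joined by a chain of moves $\pm\bw_i$ inside $\N^e$. Hence ${\bf a}$ is the only factorization of $D+n_1$. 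A factorization ${\bf c}$ of $D$ would produce a second one, ${\bf c}+\be_1\ne{\bf a}$.

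\textbf{Closing your $\lambda$-route instead.} Let $M=\max_j\lambda_j$ and $T=\{j:\lambda_j=M\}$.
\begin{enumerate}
\item Every $k\in T$ has $\sigma_k(\bw)\ge 0$, so $1\notin T$.
\item For $k\in T$ with $2\le k\le e'+1$, the condition $\sigma_k(\bw)\le 0$ forces $k-1\in T$. Descending, $T\cap\{1,\dots,e'+1\}=\emptyset$.
\item For $k\in T$ with $k\ge e'+2$, the condition $\sigma_k(\bw)\le a_k$ forces $k-e'-1\in T$ or $k-1\in T$. The first is excluded, since $k-e'-1\le e'$.
\item Taking $k=\min T$ then gives $k-1\in T$, a contradiction.
\end{enumerate}
This variant uses only Corollary~\ref{latticegen}, not the stronger Proposition~\ref{defidealw_i}.
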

\begin{proof}
We prove that $\det(\MM) \notin H$.
This will imply our statement, since if $\det(\bbM) \not \in H$, for every index $i$ we can deduce a factorization of $\det(\bbM)+n_i \in H$ from the $i$-th row of the matrix $\bbM$,
and then $\det(\bbM) \in \PF(H)$.
Assume by contradiction that $\det(\MM) \in H$. 
Since $(n_1, \ldots , n_e)^\intercal R_1 = \det(\bbM)$ we deduce
$$n_1 + \det(\MM) = \sum_{i=e'+2}^e a_in_i,$$

and thus there exist $b_1,\ldots,b_e \in \N$, with $b_1 > 0$, such that 
    $$\sum_{i=1}^e b_i n_i = \sum_{i=e'+1}^e a_in_i.$$
 
Since the ideal $I_H$ is generated by $F(\bw_i)$ by Proposition \ref{defidealw_i}, there exists a chain 
$$(0,\ldots,0,a_{e'+2},\ldots,a_e)={\bf a}={\bf z}_0 \rightarrow {\bf z}_1 \rightarrow \cdots \rightarrow {\bf z}_r = {\bf b}=(b_1,\ldots,b_e).$$ such that ${\bf z}_i-{\bf z}_{i-1}=\pm \bw_{j_i}$ for every $i=1,\ldots,r$. 
Then in particular ${\bf z}_1 = {\bf a} \pm \bw_{j_1} \in \N^e$, for some index $j_1$. 
Since $\bw_i = (a_i+1)\be_i-\be_{i+1}-a_{i+e'+1}\be_{i+e'+1}$ for every $i=1,\ldots,e$, we reach a contradiction, since ${\bf a} \pm \bw_i \notin \N^e$ for every $i=1,\ldots,e$. 
\end{proof}
\begin{prop}\label{2detnot}
We have $2\det(\bbM) \in \PF(H)$.
\end{prop}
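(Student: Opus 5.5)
The claim has two parts: $2D \notin H$, where $D=\det(\bbM)$, and $2D+n_i\in H$ for every $i$.

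\emph{Step 1: $2D+n_i\in H$.} By Proposition \ref{detnot}, row $i$ of $\bbM$ gives $D+n_i=\sum_{j\ne i} m_{ij}n_j$. Choose some $k\ne i$ with $m_{ik}>0$; such a $k$ exists because each row has $e'\ge1$ positive entries. Then $D+n_k\in H$ by row $k$, so
\[
2D+n_i=(D+n_k)+\Big(\sum_{j\ne i,k} m_{ij}n_j+(m_{ik}-1)n_k\Big)\in H .
\]
This step is routine.

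\emph{Step 2: $2D\notin H$.} Write $2D$ using two rows, say rows $1$ and $e'+2$. Row $1$ has support $\{e'+2,\dots,e\}$ with entries $a_j$, together with the $-1$ at position $1$. Row $e'+2$ has $-1$ at position $e'+2$ and positive entries $a_j$ for $j\in\{1,\dots,e'+1\}$. Adding them gives
\[
2D+n_1+n_{e'+2}=\sum_{j=1}^{e} a_j n_j,
\]
where the right-hand side is the full vector $\ba=(a_1,\ldots,a_e)$. The cancellations work out because $m_{1,e'+2}=a_{e'+2}$ and $m_{e'+2,1}=a_1$. Now suppose $2D\in H$, say $2D=\sum c_jn_j$. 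Then $\bx^{\ba}$ and $x_1x_{e'+2}\bx^{\mathbf c}$ are two factorizations of the same element, with the second divisible by $x_1$.

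By Proposition \ref{defidealw_i} they are connected by a chain of moves $\pm\bw_i$, where $\bw_i=(a_i+1)\be_i-\be_{i+1}-a_{i+e'+1}\be_{i+e'+1}$. From $\ba$, subtracting $\bw_i$ is impossible because the $i$-th entry is $a_i<a_i+1$. Adding $\bw_i$ requires entries $\ge1$ at $i+1$ and $\ge a_{i+e'+1}$ at $i+e'+1$, which $\ba$ satisfies. So moves out of $\ba$ exist, and the contradiction is not immediate as it was in Proposition \ref{detnot}.

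\emph{Main obstacle.} One must show that the class of $\ba$ is an isolated factorization, or at least contains no factorization $\mathbf z$ with $z_1\ge1$ and $z_{e'+2}\ge1$. It may be easier to show directly that no factorization $\mathbf z$ of $\sum a_jn_j$ has the form $\be_1+\be_{e'+2}+\mathbf c$ with $\mathbf c\ge0$.

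My first attempt uses the lattice parametrization. Any other factorization is $\ba+\sum\lambda_j\bw_j$ with $\lambda_j$ positive, normalized so that $\min_j\lambda_j=0$ (using $\sum\bw_j=0$). Its coordinates are
\[
a_j+a_j(\lambda_j-\lambda_{j-e'-1})+(\lambda_j-\lambda_{j-1}).
\]
Requiring them to be nonnegative and arguing at a maximal $\lambda_k$, as in Proposition \ref{lattice membership}, I expect to force all nonzero differences to propagate around the cycle. From this I would conclude that the only factorizations are $\ba$ and possibly a few explicit others, none divisible by $x_1x_{e'+2}$.

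If that fails, the alternative is to pick the pair of rows differently. The rows $i$ and $i+e'+1$ can be chosen to exploit the odd-cycle structure; with a suitable pair, Proposition \ref{lattice membership}(1) applied to $\mathbf z-\ba$ would contradict the bound $\sigma_k(\ba)=a_k$.
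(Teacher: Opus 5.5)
Step 1 is correct. The paper instead adds rows $R_i$ and $R_{i+e'+1}$, but your argument through $D+n_k$ with $m_{ik}>0$ works just as well. Step 2, however, rests on a false identity and then stops before the actual argument.

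In $\bbM$, for $i\neq j$ exactly one of $m_{ij},m_{ji}$ is nonzero. Since $m_{1,e'+2}=a_{e'+2}$, you get $m_{e'+2,1}=0$. Row $e'+2$ is supported on $\{2,\dots,e'+1\}$ plus the $-1$, so it has $e'$ positive entries, as you yourself note in Step 1. Hence
\[
2D+n_1=(a_{e'+2}-1)n_{e'+2}+\sum_{j\neq 1,e'+2}a_jn_j ,\qquad\text{i.e.}\qquad 2D+n_1+n_{e'+2}=\sum_{j=2}^{e}a_jn_j ,
\]
and not $\sum_{j=1}^{e}a_jn_j$. Your version refutes itself: every $a_j\ge 1$, so $\ba-\be_1-\be_{e'+2}\in\N^e$ would give $2D\in H$ at once. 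In particular, the statement in your ``main obstacle'' paragraph is false, because $\ba$ itself is a factorization divisible by $x_1x_{e'+2}$.

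Even with the correct base factorization $\mathbf b=(a_{e'+2}-1)\be_{e'+2}+\sum_{j\ne 1,e'+2}a_j\be_j$ of $2D+n_1$, what you give is an expectation, not a proof. The ``maximal $\lambda_k$'' heuristic is not carried out, and a classification of all factorizations is neither done nor needed. The fallback via Proposition \ref{lattice membership}(1) yields no contradiction: it only produces a coordinate of $(\mathbf z-\mathbf b)^-$ that is $\ge a_k$, which is compatible with $\sigma_k(\mathbf b)\le a_k$.

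The missing idea is to propagate \emph{positivity} of the coefficients around the cycle of step $e'+1$.
\begin{itemize}
\item Let $\mathbf z$ be a factorization of $2D+n_1$ with $z_1\ge 1$. By Corollary \ref{latticegen} and $\sum_j\bw_j=\mathbf 0$, write $\mathbf z=\mathbf b+\sum_j\lambda_j\bw_j$ with $\lambda_j\in\N$ and $\min_j\lambda_j=0$. Then $z_j=b_j+(a_j+1)\lambda_j-\lambda_{j-1}-a_j\lambda_{j-e'-1}$.
\item Since $b_1=0$ and $z_1\ge1$, you get $\lambda_1\ge 1$.
\item Since $b_{e'+2}=a_{e'+2}-1$, having $\lambda_{e'+2}=0$ would give $z_{e'+2}\le -1$. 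So $\lambda_{e'+2}\ge 1$.
\item Set $j_1=1$ and $j_{i+1}=j_i+e'+1$. Then $j_{i+1}-e'-1=j_i$ and $j_{i+1}-1\equiv j_{i-1}$. Using $b_j\le a_j$, if $\lambda_{j_{i+1}}=0$ then $z_{j_{i+1}}\le a_{j_{i+1}}-1-a_{j_{i+1}}<0$. So $\lambda_{j_{i+1}}\ge 1$ by induction.
\item Since $\gcd(e'+1,e)=1$, the $j_i$ exhaust $\{1,\dots,e\}$. Thus $\lambda_j\ge 1$ for all $j$, contradicting $\min_j\lambda_j=0$.
\end{itemize}
This is the paper's argument, which it phrases via a minimal chain from Lemma \ref{LemChainSum}. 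Note that the second step uses the $-1$ in position $e'+2$ of $\mathbf b$, which comes from the correct row sum.
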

\begin{proof}
We prove that $2\det(\bbM) \notin H$. This will imply our statement: if $2\det(\bbM) \notin H$, since $(n_1, \ldots , n_e)^\intercal R_i = \det(\bbM)$ for every $i=1,\ldots,e$, by adding the equalities associated to $R_i$ and $R_{i+e'+1}$ we obtain
$$2\det(\bbM)= -n_i+(a_{i+e'+1}-1)n_{i+e'+1}+\sum_{j=1,j \ne i+e'+1, j \ne i}^e a_jn_j.$$
Then $2\det(\MM)+n_i \in H$ for every $i=1,\ldots,e$, and thus $2\det(\MM) \in \PF(H).$

Assume then by contradiction that $2\det(\bbM) \in H$. Then there exists a factorization ${\bf z}$ of $2\det(\bbM)+n_1 \in H$ such that $\sigma_1({\bf z}) > 0$. On the other hand, from the equality $(n_1, \ldots , n_e)^\intercal (R_1+R_{e'+2})= 2\det(\bbM)$ we deduce 
\begin{equation}\label{EqnRow2det}
2\det(\bbM)+n_1= (a_{e'+2}-1)n_{e'+2}+\sum_{j=2,j \ne e'+2}^e a_jn_j.
\end{equation}
Let $\bw \in \N^e$ be the factorization of $2\det(\MM)+n_1$ associated to the previous equation. Then $\bw-{\bf z} \in \ker(\Phi_H)$ is such that $\sigma_1(\bw-{\bf z})^+) < a_1+1 = \alpha_1$, and since $I_H=(F(\bw_1),\ldots,F(\bw_e))$ by Proposition \ref{defidealw_i}, by Lemma \ref{LemChainSum} there exists a chain
$$\cC : (\bw-{\bf z})^+ = {\bf z}_{0} \rightarrow {\bf z}_{1} \rightarrow \cdots \rightarrow {\bf z}_{r} = (\bw-{\bf z})^-$$
such that, for every $j=1,\ldots,r$, we have ${\bf z}_{j} \in \N^e$ and ${\bf z}_{j} - {\bf z}_{j-1}=  \bw_{i_j}$, for some $i_j$.

Assume that the chain $\cC$ has minimal length; 
in particular, this means that the vectors $\bw_1,\ldots,\bw_e$ do not appear all as difference of elements in the chain, 
or else we would be able to remove them and find a shorter chain, 
arguing as in the proof of Lemma \ref{LemChainSum} using $\bw_1+\ldots+\bw_e={\bf 0}$. 

Let $\tilde{\bw}_1=\bw_1$, $j_1=1$, define recursively the indices $j_i = j_{i-1}+e'+1$, and let $\tilde{\bw}_i = \tilde{\bw}_{i-1} + \bw_{j_i}$ (then $\tilde{\bw}_1=\bw_1$, $\tilde{\bw}_2=\bw_1+\bw_{e'+2}$, and so on). In particular $\tilde{\bw}_e = \bw_1+\ldots+\bw_e$. We prove that the sums $\tilde{\bw}_i$ are sub-sums of $\bw_{i_1}+\ldots+\bw_{i_r}$; this will imply that $\tilde{\bw}_e = \bw_1+\ldots+\bw_e$ is a sub-sum of $\bw_{i_1}+\ldots+\bw_{i_r}$, contradicting the minimality of $\cC$.

We prove this claim by induction. First, since $\sigma_1({\bf z}_r) = \sigma_1((\bw-{\bf z})^-) > 0$ and $\sigma_1((\bw-{\bf z})^+) = 0$, there exists an index $s$ such that $i_s=1$. 
Then $\tilde{\bw}_1$ is a sub-sum of $\bw_{i_1}+\ldots+\bw_{i_r}$. 
Consider the index $j_2=j_1+e'+1=e'+2$. Then Equation (\ref{EqnRow2det}) implies $\sigma_{j_2}((\bw- {\bf z})^+-\bw_1) \le a_{e'+2}-1 - a_{e'+2} < 0$, and since $\sigma_{j_2}((\bw-{\bf z})^-) \ge 0$
it follows that there must be an index $s$ such that $i_s=e'+2$, and $\tilde{\bw}_2=\bw_1+\bw_{e'+2}$ is a sub-sum of $\bw_{i_1}+\ldots+\bw_{i_r}$.

Assume then that $\tilde{\bw}_i$ is a sub-sum of $\bw_{i_1}+\ldots+\bw_{i_r}$ for some $i \ge 2$. 
Consider the equation
\begin{equation}\label{poschain}
\sigma_{j_{i+1}}((\bw-{\bf z})^+ + \bw_{i_1}+\ldots+\bw_{i_r}) = \sigma_{j_{i+1}}((\bw-{\bf z})^-) \ge 0.
\end{equation}
By definition of $\bw_k$, the only indices $k$ for which $\sigma_{j_{i+1}}(\bw_k)$ is non-zero are $j_{i+1},j_{i+1}-1,j_{i+1}-e'-1$. However, since $j_{i+1}-1=j_{i+1}-2e'-2=j_{i-1}$ and $j_{i+1}-e'-1=j_i$ then $\bw_{j_{i-1}}+\bw_{j_i}$ is a sub-sum of $\tilde{\bw}_i$, and thus a sub-sum of $\bw_{i_1}+\ldots+\bw_{i_r}$. But then, since $$\sigma_{j_{i+1}}((\bw-{\bf z})^++\bw_{j_{i-1}}+\bw_{j_i}) \le a_{j_{i+1}}-a_{j_{i+1}}-1 < 0,$$
then by Equation (\ref{poschain}) it follows that there exists an index $s$ such that $i_s=j_{i+1}$, and thus $\tilde{\bw}_{i+1}=\tilde{\bw}_i+\bw_{j_{i+1}}$ is a sub-sum of $\bw_{i_1}+\ldots+\bw_{i_r}$.
\end{proof}
\begin{lem}\label{factorapery}
For every index $i$ and every element $\omega \in \Ap(H,n_i)$, there exists a factorization
$$\omega=z_1n_1+\ldots+z_en_e,$$
where $a_j \ge z_j$ for every $j=1,\ldots,e$. 
\end{lem}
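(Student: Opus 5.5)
We argue by reduction along the lattice generated by the vectors $\bw_k=(a_k+1)\be_k-\be_{k+1}-a_{k+e'+1}\be_{k+e'+1}$. Fix $i$ and $\omega\in\Ap(H,n_i)$, and choose any factorization $\mathbf{z}=(z_1,\ldots,z_e)\in\N^e$ of $\omega$. Since $\omega-n_i\notin H$, every factorization of $\omega$ has $z_i=0$. If some $z_k\ge a_k+1=\alpha_k$ (Proposition \ref{walpha}), then $\mathbf{z}-\bw_k\in\N^e$ is again a factorization of $\omega$. This is exactly the rewriting step used in Lemma \ref{betas} and in the uniqueness part of Theorem \ref{forwardmainTh}. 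Iterating, we want to reach a factorization with all $z_j\le a_j$.

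The key point is termination. First, each step replaces $\mathbf{z}$ by $\mathbf{z}-\bw_k$ with $k\ne i$. Indeed $z_i=0<\alpha_i$ throughout, because every intermediate vector is a factorization of $\omega\in\Ap(H,n_i)$, so the index $i$ is never used. Next, suppose the process does not halt. Since $\omega$ has only finitely many factorizations, some factorization repeats. This gives a relation $\bw_{k_1}+\cdots+\bw_{k_s}=\mathbf{0}$ with all $k_t\ne i$ and $s\ge1$.

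Such a relation is impossible. By Corollary \ref{latticegen}, $\bw_1,\ldots,\bw_{e-1}$ form a basis of the rank $e-1$ lattice, so the only relations among $\bw_1,\ldots,\bw_e$ are the integer multiples of $\bw_1+\cdots+\bw_e=\mathbf{0}$. Every nontrivial non-negative relation must therefore involve every index with the same positive coefficient, including $i$, which is a contradiction. Alternatively, one can invoke Corollary \ref{new3.2}, which is now available: $I_H$ is critical by Proposition \ref{defidealw_i}, and $H$ is not a gluing because it is almost symmetric but not symmetric by Propositions \ref{detnot} and \ref{2detnot} (see Remark \ref{RemNotGluing}). Hence the process stops at a factorization with $z_j\le a_j$ for all $j$, and $z_i=0\le a_i$.

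The only delicate step is ruling out cycling. It is handled by the unique-relation property of the $\bw_k$ (equivalently Corollary \ref{new3.2}), together with the observation that index $i$ never occurs because $z_i=0$ throughout. I would use the direct lattice argument, since it avoids the forward implication of almost symmetry.
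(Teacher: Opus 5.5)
Your argument is correct and is essentially the paper's proof. You replace $\mathbf{z}$ by $\mathbf{z}-\bw_k$ whenever $z_k\ge a_k+1$, observe that $\bw_i$ is never subtracted because $z_i=0$ throughout, and rule out cycling because the only relations among $\bw_1,\ldots,\bw_e$ are multiples of $\bw_1+\cdots+\bw_e=\mathbf{0}$.

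Drop your ``alternative'' via Corollary~\ref{new3.2}, however, because it is circular. Propositions~\ref{detnot} and~\ref{2detnot} only show that $H$ is not symmetric, but the non-gluing claim of Remark~\ref{RemNotGluing} also needs almost symmetry. Almost symmetry is only established in Proposition~\ref{PropPFfromM}, and that proof uses this very lemma.
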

\begin{proof}
Fix an index $i$, and let $\omega \in \Ap(H,n_i)$. Then $\omega=z_1n_1+\ldots+z_en_e$, where $z_j \in \N$ for every $j=1,\ldots,e$ and $z_i=0$. Let ${\bf z}=(z_1,\ldots,z_e)$, and assume that there exists $k$ such that $\sigma_k({\bf z})\ge \alpha_k = a_k+1$ by Proposition \ref{walpha}. Then ${\bf z}-\bw_k \in \N^e$ is another factorization of $\omega$; since $\omega \in \Ap(H,n_i)$, the $i$-th coordinate of this new factorization is again $0$. By repeating this argument we subtract several $\bw_j$s to our factorization, but we cannot subtract $\bw_i$ since the $i$-th coordinate is always zero. Since the number of factorizations is finite, if this algorithm does not halt we eventually reach the same factorization, contradicting the fact that the only minimal relation among the $\bw_i$ is $\bw_1+\ldots+\bw_e={\bf 0}$. 
\end{proof}

\begin{prop}\label{PropPFfromM}
The  semigroup $H$ is pseudo-symmetric with $\PF(H)=\{\det(\bbM),2\det(\bbM)\}$.
\end{prop}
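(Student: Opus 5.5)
We already know $\det(\bbM)$ and $2\det(\bbM)$ lie in $\PF(H)$ (Propositions \ref{detnot} and \ref{2detnot}). So the plan is to show there are no other pseudo-Frobenius numbers, and that $\Fr(H)=2\det(\bbM)$. We work in $\Ap(H,n_1)$ and use Lemma \ref{factorapery}, which gives every $\omega\in\Ap(H,n_1)$ a factorization $\bx^{\mathbf z}$ with $z_1=0$ and $z_j\le a_j$ for all $j$. Recall that $f\in\PF(H)$ if and only if $f+n_1$ is maximal in $\Ap(H,n_1)$ with respect to $\le_H$. It therefore suffices to show that every maximal element of $\Ap(H,n_1)$ equals $\det(\bbM)+n_1$ or $2\det(\bbM)+n_1$.

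Let $\omega=\sum z_jn_j$ be maximal, with $\mathbf z$ as above. For each $j\ne 1$ with $z_j<a_j$, maximality forces $\omega+n_j\notin\Ap(H,n_1)$, so $\omega+n_j-n_1\in H$. The reachable ``top'' candidates are the factorizations $\bm_1+\be_1=\sum_{j\ge e'+2}a_j\be_j$ and the factorization from \eqref{EqnRow2det}. The strategy is to show that any $\mathbf z$ with $z_1=0$, $0\le z_j\le a_j$, lying in the Apéry set, is dominated coordinatewise (so $\le_H$) by one of these two vectors, or by another Apéry element.

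Concretely, we first use the critical generators $\bw_k=(a_k+1)\be_k-\be_{k+1}-a_{k+e'+1}\be_{k+e'+1}$ to describe exactly which bounded vectors $\mathbf z$ with $z_1=0$ give elements of $\Ap(H,n_1)$. A factorization leaves the Apéry set exactly when some chain of moves $+\bw_k$ applied to it (as in Lemma \ref{LemChainSum}) produces a vector with positive first coordinate. Adding $\bw_k$ requires coordinates $k+1$ and $k+e'+1$ to be at least $1$ and $a_{k+e'+1}$ respectively. Reaching a positive first coordinate then requires adding $\bw_1$, which requires $z_2\ge1$ and $z_{e'+2}\ge a_{e'+2}$ at that stage, and so on. Tracing this cascade backward along the cycle $j\mapsto j+e'+1$ gives a combinatorial criterion. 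Roughly: $\mathbf z$ is not in the Apéry set if and only if it contains the ``full'' pattern needed to fire the cyclic sequence $\bw_{j_s},\dots,\bw_1$ described in the proof of Proposition \ref{2detnot}.

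Using this criterion, we classify the coordinatewise-maximal bounded vectors avoiding the pattern, and find they are exactly the two vectors above. We then conclude that the maximal elements of $\Ap(H,n_1)$ are exactly $\det(\bbM)+n_1$ and $2\det(\bbM)+n_1$. Hence $\PF(H)=\{\det(\bbM),2\det(\bbM)\}$, $\Fr(H)=2\det(\bbM)$, and $H$ is pseudo-symmetric.

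The main obstacle is the classification step: turning the chain argument into a clean characterization of the Apéry set. An alternative I would try first is counting. Compute $|\{\mathbf z: z_1=0,\ z_j\le a_j\}$ modulo the identifications given by the $\bw_k$$|$ and compare it with $n_1$ via the cofactor formula for $n_1$. If the count of pattern-avoiding vectors equals $n_1$, the Apéry set is fully described, and its maximal elements can be read off directly.
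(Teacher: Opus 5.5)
\textbf{The classification step fails as stated.} Reducing to the $\le_H$-maximal elements of $\Ap(H,n_1)$ and using the bounded factorizations of Lemma~\ref{factorapery} is a fine start. But the classification step carries the whole content of the proposition, you do not carry it out, and as you formulate it, it is false. Coordinatewise maximality among vectors with $z_1=0$ and $z_j\le a_j$ is much weaker than $\le_H$-maximality. The factorizations of $2\det(\bbM)+n_1$ that witness domination need not be bounded by the $a_j$.

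\textbf{A counterexample.} Take $e=5$ and $\mathbf z=(0,0,a_3,a_4,a_5-1)$.
It lies in $\Ap(H,n_1)$, since it is coordinatewise below $(0,0,a_3,2a_4,a_5-1)=R_1+R_5+\be_1$, which is a factorization of $2\det(\bbM)+n_1$.
It is coordinatewise maximal among bounded Ap\'ery vectors with $z_1=0$. Raising $z_2$ lets you add $\bw_1$, giving $(a_1+1,0,a_3,0,a_5-1)$ with positive first coordinate. Raising $z_5$ gives $\det(\bbM)+n_1+a_3n_3$, and $\det(\bbM)+a_3n_3\in H$.
Yet $\mathbf z$ is neither of your two vectors. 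With $\ba=(2,3,4,5,6)$ from Example~\ref{gcd hyp}, it is $(0,0,4,5,5)$, representing $4617=2\det(\bbM)+n_1-5n_4$.
So a classification of coordinatewise-maximal bounded vectors returns more than the two candidates, and your conclusion does not follow. The counting alternative has the same defect: knowing $|\Ap(H,n_1)|$ does not tell you which elements are $\le_H$-maximal.

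\textbf{The missing idea.} Use the whole family of factorizations of $2\det(\bbM)+n_1$ coming from $R_1+R_{e'+i}$, for $i=2,\dots,e'+1$. Their entries are:
$0$ in positions $2,\dots,i-1$;
$a_j$ in positions $i,\dots,e'+1$;
$2a_j$ in positions $e'+2,\dots,e'+i-1$;
$a_{e'+i}-1$ in position $e'+i$;
$a_j$ afterwards.

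\textbf{How the argument then goes.} Take $h\in\PF(H)\setminus\{2\det(\bbM)\}$. Then $h\not\le_H 2\det(\bbM)$, because $2\det(\bbM)\notin H$. Take a bounded factorization $\mathbf z$ of $h+n_1$ with $z_1=0$, and induct on $i$, assuming $z_j=0$ and $z_{e'+j}=a_{e'+j}$ for $2\le j<i$.
If $z_{e'+i}\le a_{e'+i}-1$, then $\mathbf z-\be_1$ is coordinatewise below $R_1+R_{e'+i}$, which gives $h\le_H 2\det(\bbM)$, a contradiction. Hence $z_{e'+i}=a_{e'+i}$.
Then $z_i\ge 1$ would make $\mathbf z-\be_1+\bw_1+\cdots+\bw_{i-1}\in\N^e$ a factorization of $h$, contradicting $h\notin H$. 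Hence $z_i=0$.
At the end $\mathbf z-\be_1=R_1$, so $h=\det(\bbM)$. This forced-coordinates argument replaces your global description of the Ap\'ery set entirely.
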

\begin{proof}
Let $h \in \PF(H) \setminus \{2\det(\bbM)\}$. Then $h+n_1  \in  \Ap(H,n_1)$, and by Lemma \ref{factorapery} we can write $$h=\omega-n_1=z_1n_1+\ldots+z_en_e,$$
where $a_j \ge z_j \ge 0$ for every $j=2,\ldots,e$, and $z_1=-1$. Let ${\bf z}=(z_1,\ldots,z_e)$.

Since $h \in \PF(H) \setminus \{2\det(\bbM)\}$, we have $h \not\le_H 2\det(\bbM)$. We prove that for every $i=2,\ldots,e'+1$, $z_i = 0$ and $z_{e'+i}=a_{e'+i}$. This will determine a unique element of $\PF(H) \setminus \{2\det(\MM)\}$, which must be $\det(\MM)$ by Proposition \ref{detnot}, and thus $H$ is pseudo-symmetric.

We argue by induction. First, the equality $(n_1, \ldots , n_e)^\intercal (R_1+R_{e'+2}) = 2\det(\bbM)$ implies
$$2\det(\bbM)= -n_1+(a_{e'+2}-1)n_{e'+2}+\sum_{j=2,j \ne e'+2}^e a_jn_j.$$
Since $a_j \ge z_j$ for every $j=2,\ldots,e$, if $z_{e'+2} \le a_{e'+2}-1$ we would have $h \le_H 2\det(\bbM)$, contradiction. 
Then $z_{e'+2}=a_{e'+2}$.
Moreover, if $z_2 \ge 1$, since $\bw_1=(a_1+1)\be_1-\be_2-a_{e'+2}\be_{e'+2}$ then ${\bf z}+\bw_1 \in \N^e$ would be another factorization of $h$, contradicting $h \not \in H$.
Then $z_2=0$.

Assume now that $z_j=0$ and $z_{e'+j}=a_{e'+j}$ for every $j=2,\ldots,i-1$. Then from the equality $(n_1, \ldots , n_e)^\intercal (R_1+R_{e'+i}) = 2\det(\bbM)$ we obtain
\begin{equation}\label{Eqni2det}
2\det(\bbM)= -n_1+\sum_{j=i}^{e'+1} a_jn_j + \sum_{j=e'+2}^{e'+i-1}2a_jn_j+ (a_{e'+i}-1)n_{e'+i}+\sum_{j=e'+i+1}^{e} a_jn_j.
\end{equation}
If $z_{e'+i} \le a_{e'+i}-1$, since $z_j \le a_j$ for every $j=2,\ldots,e$, we would have $h \le_H 2\det(\bbM)$, contradiction.
On the other hand, if $z_i \ne 0$, then since $z_{e'+j}=a_{e'+j}$ for every $j=2,\ldots,i-1$ we can see that $${\bf z}-\sum_{j=1}^{i-1} \bw_j \in \N^e$$
is a factorization of $h$ in $H$, contradicting $h \notin H$. Then $z_i=0$ and $z_{e'+i}=a_{e'+i}$.
\end{proof}

\begin{ex}\label{ExException111}
The case $\ba= (1,1,1) $ is to be considered a genuine exception, in which the construction ceases to be meaningful.
In this case, the matrix
$$
\bbM = \begin{pmatrix}
    -1 & 0 & 1 \\ 1 & -1 & 0 \\ 0 & 1 &-1
\end{pmatrix}
$$
is singular, and the formula $(n_1, n_2, n_3) = \det(\bbM) \cdot \bbM^{-1}{\bf 1}_e $ cannot be applied directly. 
We may opt to define $n_1, n_2, n_3$ directly in terms of the adjugate matrix; however, we have
$$
\mathrm{adj}(\bbM) = \begin{pmatrix}
    1 & 1 & 1 \\ 1 & 1 & 1 \\ 1 & 1 & 1
\end{pmatrix},
$$
so we would obtain $n_1=n_2=n_3=3$ and the construction would not yield a numerical semigroup.
\end{ex}

\appendix

\section{Examples}
    This appendix provides examples and counterexamples demonstrating the  sharpness of some of the statements and results appearing in the previous sections.

\begin{ex} \label{gcd hyp}
\begin{enumerate}
\item Let ${\bf a}=(a_1,\ldots,a_5)=(2,3,4,5,6) \in \mathbb{Z}^5$ and let $\mathbb{M}$ be the matrix of Theorem \ref{mainTh}
\[ \mathbb{M}=\begin{pmatrix}
-1 & 0 & 0 & 5 & 6 \\
2 & -1 & 0 & 0 & 6 \\
2 & 3 & -1 & 0 & 0 \\
0 & 3 & 4 & -1 & 0 \\
0 & 0 & 4 & 5 & -1 
\end{pmatrix}.
\]
Then
\[ \det(\bbM) \cdot \bbM^{-1}{\bf 1}_e = (731,553,358,328,309)^\intercal\]
Since $\gcd(731,553,358,328,309)=1$, the numerical semigroup $H = \langle 731,553,358,328,309 \rangle$ is pseudo-symmetric and $\Bbbk[H]$ is an almost complete intersection, and we have $I_H= (x_1^3-x_2x_4^5,x_2^4-x_3x_5^6,x_3^5-x_4x_1^2,x_4^6-x_5x_2^3, x_5^7-x_1x_3^4)$.
\item Let ${\bf a}=(5,3,3,2,2)$. Repeating the same construction we obtain \[ \det(\bbM) \cdot \bbM^{-1}{\bf 1}_e =  (105,160,190,235,225)^\intercal.\]
Since $\gcd(105,160,190,235,225)=5$, the main Theorem does not apply here. However, we might consider the monoid $H' = \langle 105,160,190,235,225 \rangle$. This monoid is isomorphic to the numerical semigroup $H = \langle 21,32,38,47,45 \rangle$; we might wonder if this numerical semigroup satisfies the thesis of our main Theorem. However, this is not the case, as $\PF(H)=\{81,93,103,107,120\}$, and the ideal $I_H$ is minimally generated by $14$ binomials: hence $H$ is not pseudo-symmetric, and $\Bbbk[H]$ is not an almost complete intersection.
\item Let ${\bf a}=(2,2,3,3,5)$. This vector has the same entries as the one in the previous Example; in general, arbitrary permutations in $\Sym(5)$ might change the determinant; compare this with the behavior of the cascade polynomial in Theorem \ref{ThmCascadePolynomial}. In this case we have
\[ \det(\bbM) \cdot \bbM^{-1}{\bf 1}_e =  (237,249,157,154,118)^\intercal,\]

and since $\gcd(237,249,157,154,118)=1$, the main Theorem holds in this case: therefore the hypotheses of Theorem \ref{mainTh} are order sensitive.
\end{enumerate}
\end{ex}

\begin{ex}\label{symnotcrit}
Let $H=\langle 14,15,20,21 \rangle$. Then $I_H=(x_4^2-x_1^3,x_3^3-x_2^4,x_1x_4-x_2x_3)$ is a complete intersection, it has a unique minimal system of binomial generators, and is not critical.
Compare with Theorem \ref{ThmCriticalIffAci}.
\end{ex}

\begin{ex}\label{CInotunique}
Let $H=\langle 6,8,9\rangle$. We have $I_H=(x_1^3-x_3^2,x_2^3-x_1^4)=(x_1^3-x_3^2,x_2^3-x_1x_3^2)$, thus, $I_H$ is a complete intersection but it does not admit a unique minimal system of binomial generators.
\end{ex}

\begin{ex}\label{acinotcrit}
Let $H=\langle 26,46,67,91 \rangle$. 
Then $I_H$ is minimally generated by $\{x_4^2-x_1^7,x_3^4-x_1^5x_2^3,x_2x_3^3-x_1^6x_4,x_2^4-x_1x_3x_4\}$,
in particular, it is an almost complete intersection.
Since $\alpha_1=7$, $\alpha_4=2$ and the only factorizations of $182$ are $182=\alpha_1 n_1 = \alpha_4 n_4$, it follows that $I_H$ cannot be critical.
See the discussion after  Definition \ref{DefCriticalToric}.
\end{ex}

\begin{ex}\label{genbycritnotcrit}
Let $H=\langle 85,105,140,238,357 \rangle$. Then $I_H=(x_1^7-x_4x_5,x_1^7-x_2^3x_3^2,x_2^4-x_3^3,x_4^3-x_5^2)$ is generated by critical binomials, however, it is not critical,
since it is not possible to generate both  $x_1^7-x_4x_5$ and $x_1^7-x_2^3x_3^2$ using a single choice of $F(\bv_1)$.
See the discussion after  Definition \ref{DefCriticalToric}.
\end{ex}

\begin{ex}\label{critnotanychoice}
Let $H=\langle 510,595,630,714,840 \rangle$. Then $I_H$ is critical, minimally generated by  $F_1=x_1^7-x_3^3x_5^2$, $F_2 = x_2^6-x_1^7$, $F_3=-F_5=x_3^4-x_5^3$, $F_4=x_4^5-x_1^7$. However, if we replace $F_1$ with $F'_1=x_1^7-x_2^6 = -F_2$, then $\{F'_1,F_2,F_3,F_4,F_5\}$ do not generate $I_H$, since $F_1 \notin (F_2,F_3,F_4)$.
Compare with Theorem \ref{ThmCriticalIffAci}.
\end{ex}
 
\begin{ex}\label{ce_RF(f)} 
Let $H = \langle 9,12,13,15,29\rangle$. 
Then $\PF(H) =\{16,32\}$ and  $H$ is pseudo-symmetric. 
The element $16 \in \PF(H)$ admits four distinct $\RF$-matrices, which differ only by their last row (encoding different factorizations of $16+29 \in H$).
Consider  the $\RF$-matrix
\[ \bbM=(m_{ij})=\RF(16)=\begin{pmatrix} -1 & 1 & 1 & 0 & 0 \\
                            0 & -1& 1 & 1 & 0 \\    
                            0 &  0& -1& 0 & 1 \\
                            2 &  0&  1& -1& 0 \\
                            2 &  1 &  0 &  1 & -1       
\end{pmatrix}
\]
Since $\alpha_1=3$ and $\alpha_i \ge 2$ for every $i=2,\ldots,5$, we have  $m_{ij} < \alpha_j$ for every $i,j$, and this is the unique $\RF$-matrix satisfying this property.
Since $\alpha_3=3$ and $0 < m_{13} < \alpha_3-1$, the statement of Proposition \ref{mj=alj-1} does not hold for $H$. 
Indeed, since $\alpha_3=3$, it is easy to verify that the minimal binomial $f=x_3^3- x_2^2x_4\in I_H$ does not appear as an RF-relation of $\bbM$. Moreover, it does not appear as an $\RF$-relation of any $\RF$-matrix of $16$.
Compare with Proposition \ref{RF(f)}.
\end{ex}


\begin{thebibliography}{99}

\bibitem[AV94]{AV} A. Alc\'antar, R.H. Villarreal, \textit{Critical binomials of monomial curves}, Comm. Algebra {\bf 22} (1994), 3037--3052.

\bibitem[BF97]{BF} V. Barucci, R. Fr\"oberg, \textit{One-dimensional almost Gorenstein rings}, J. Algebra \textbf{188} (1997), 418--442.

\bibitem[B75]{B75} H. Bresinsky, \textit{Symmetric semigroups of integers generated by $4$ elements}, Manuscripta Math. \textbf{17} (1975), 205--219.

\bibitem[BE77]{BE}
D. A. Buchsbaum, D. Eisenbud,
\textit{Algebra structures for finite free resolutions, and some structure theorems for ideals of codimension 3},
Amer. J. Math. {\bf 99}  (1977), 447--485.

\bibitem[B68]{Burch}
L. Burch, 
\textit{On ideals of finite homoloǵical dimension in local rings},
Math. Proc. Camb. Philos. Soc. {\bf 64}
(1968), 941--948.

\bibitem[CGNW25]{CGNW} T. Chmiel, L. Guerrieri, X. Ni, J. Weyman, \textit{Structure theorems for Gorenstein ideals of codimension four with small number of generators}, In: Commutative Algebra. The Mathematical Legacy of Wolmer V. Vasconcelos, eds. J. Brennan, A. Simis, de Gruyter (2025), 297--332.

\bibitem[DGM]{DGM} M. Delgado, P.A. Garc\'ia-S\'anchez, J. Morais, NumericalSgps -
a GAP package, Ver. 1.50 (2026). Available at \url{http://www.gap-system.org/Packages/numericalsgps}.

\bibitem[D76]{Delorme} C. Delorme, \textit{Sous-monoïdes d’intersection complète de $ N$}, Ann. Sci. Éc. Norm. Supér. {\bf 9} (1976), 145--154.

\bibitem[El83]{El83} S. Eliahou, \textit{Courbes monomiales et algèbre de Rees symbolique}, Ph.D. Thesis, Universit\'e de Genève (1983).

\bibitem[E04]{Et1} 
K. Eto, \emph{When is a binomial ideal equal to a lattice ideal up to radical? II}, 
Toyama Math. J. {\bf 27} (2004), 111--123.

\bibitem[E17]{E}  K. Eto, \textit{Almost Gorenstein monomial curves in affine four space}. J. Algebra,
\textbf{488} (2017), 362--387.

\bibitem[GAP]{GAP} The GAP Group, GAP -- Groups, Algorithms, and Programming, Ver. 4.16.1 (2026). Available at \url{https://www.gap-system.org}.

\bibitem[GLO21]{GLO}  P. A. Garc\'ia-S\'anchez, D. Llena, I. Ojeda, \textit{Critical binomial ideals of Northcott type}, J. Aust. Math. Soc. {\bf 110} (2021), 48--70.

\bibitem[GTT15]{GTT} S. Goto, R. Takahashi, N. Taniguchi, \textit{Almost Gorenstein rings - towards a theory of higher dimension}, J. Pure Appl. Algebra {\bf 219} (2015), 2666--2712. 

\bibitem[M2]{M2} D. Grayson, M. Stillman, Macaulay2, a software system for research in algebraic geometry, Ver. 1.26.06 (2026). Available at \url{http://www2.macaulay2.com}.

\bibitem[GCNW25]{GCNW} L. Guerrieri, T. Chmiel, X. Ni, J. Weyman, \emph{Grade three perfect ideals and length four self-dual resolutions}, arXiv:2512.01079 (2025).

\bibitem[H70]{H} J. Herzog, \textit{Generators and relations of abelian semigroups and semigroup rings}, Manuscripta Math. \textbf{3} (1970), 175--193.

\bibitem[HW19]{HW} J. Herzog, K. Watanabe, \emph{Almost symmetric numerical  semigroups},  
Semigroup Forum {\bf 98} (2019), 589--630. 

\bibitem[KO14]{KO} A. Katsabekis, I. Ojeda, \textit{An indispensable classification of monomial curves in $\mathbb{A}^4(k)$}, Pacific. J. Math. {\bf 268} (2014), 96--116.

\bibitem[K74]{Ku74} E. Kunz, \emph{Almost complete intersections are not Gorenstein rings}, J. Algebra {\bf 28} (1974), 111--115.

\bibitem[K93]{Kustin93}
A. R. Kustin, 
\textit{Classification of the Tor-algebras of codimension four almost complete intersections},
Proc. Amer. Math. Soc. {\bf 339} (1993), 61--85.

\bibitem[K94]{Kustin}
A. R. Kustin, 
\emph{The minimal resolution of a codimension four almost complete intersection is a DG-algebra},
J. Algebra {\bf 168}  (1994), 371--399.

\bibitem[M16]{Mo} A. Moscariello, \emph{On the type of an almost Gorenstein monomial curve},
J. Algebra  \textbf{456} (2016), 266--277.

\bibitem[MS25]{MS} A. Moscariello, A. Sammartano,
Open problems on relations of numerical semigroups, In: Recent progress in rings and factorization theory, eds. M. Bresar, A. Geroldinger, B. Olberding, and D. Smertnig, Springer (2025), 365--380.

\bibitem[N13]{N} H. Nari, \textit{Symmetries on almost symmetric numerical semigroups}, Semigroup Forum \textbf{86} (2013), 140--154.

\bibitem[PS93]{Palmer}
S. Palmer Slattery,
\emph{Algebra structures on resolutions of rings defined by grade four almost complete intersections},
J. Algebra {\bf 159} (1993), 1--46.


\end{thebibliography}
\end{document}